\documentclass[11pt,a4paper]{article}

\usepackage{epsf,epsfig,amsfonts,amsgen,amsmath,amstext,amsbsy,amsopn,amsthm
}
\usepackage{amsmath}
\usepackage{amsfonts,amsthm,amssymb}
\usepackage{amsfonts}
\usepackage{graphics}
\usepackage{latexsym,bm}
\usepackage{amsfonts,amsthm,amssymb,bbding}
\usepackage{indentfirst}
\usepackage{graphicx}
\usepackage{color}
\usepackage[colorlinks=true,anchorcolor=blue,filecolor=blue,linkcolor=blue,urlcolor=blue,citecolor=blue]{hyperref}
\usepackage{float}
\usepackage{tikz}
\usepackage[mathscr]{eucal}
\setlength{\textwidth}{150mm} \setlength{\oddsidemargin}{7mm}
\setlength{\evensidemargin}{7mm} \setlength{\topmargin}{-5mm}
\setlength{\textheight}{245mm} \topmargin -18mm

\newtheorem{thm}{Theorem}

\newtheorem{lemma}{Lemma}
\newtheorem{false statement}{False statement}

\theoremstyle{definition}

\newtheorem{claim}{Claim}

\newtheorem{case}{Case}
\newtheorem{subcase}{Subcase}[case]

\baselineskip 15pt

\begin{document}

\title{Spectral Tur\'{a}n problem for $\mathcal{{K}}_{3,3}^{-}$-free signed graphs
\footnote{Supported by Natural Science Foundation of Xinjiang Uygur Autonomous Region (No. 2024D01C41), NSFC (No. 12361071), The Basic scientific research in universities of Xinjiang Uygur Autonomous Region (XJEDU2025P001) and Tianshan Talent Training Program (No. 2024TSYCCX0013).}}
\author{ Mingsong Qin, {Dan Li}\thanks{Corresponding author. E-mail: ldxjedu@163.com.}\\
{\footnotesize  College of Mathematics and System Science, Xinjiang University, Urumqi 830046, China}}
\date{}

\maketitle {\flushleft\large\bf Abstract:}
The classical spectral Tur\'{a}n problem is to determine the maximum spectral radius of an  $\mathcal{F}$-free graph of order $n$. Zhai and Wang [Linear Algebra Appl, 437 (2012) 1641-1647] determined the maximum spectral radius  of ${C}_{4}$-free graphs of given order. Additionally, Nikiforov obtained spectral strengthenings of the K\H{o}vari-S\'{o}s-Tur\'{a}n theorem [Linear Algebra Appl, 432 (2010) 1405-1411] when the forbidden graphs are complete bipartite. The spectral Tur\'{a}n problem concerning forbidden complete bipartite graphs in signed graphs has also attracted considerable attention. Let $\mathcal{K}_{s,t}^-$ be the set of all unbalanced signed graphs with underlying graphs $K_{s,t}$. Since the cases where $s=1$ or $t=1$ do not conform to the definition of  $\mathcal{K}_{s,t}^-$, it follows that $s,t\geq 2$.  Wang and Lin [Discrete Appl. Math, 372 (2025) 164-172] have solved the case of $s=t=2$ since  $\mathcal{K}_{2,2}^-$ is $\mathcal{C}_{4}^{-}$ in this situation. This paper gives an answer for $s=t=3$ and  completely characterizes the corresponding extremal signed graphs.

\vspace{0.1cm}
\begin{flushleft}
\textbf{Keywords:} Signed graph; Tur\'{a}n problem; Adjacency matrix; Index
\end{flushleft}
\textbf{AMS Classification:} 05C50; 05C35

\section{Introduction}
 All graphs in this paper are simple. Let $G$ be a graph with vertex set $V(G)=\{v_1,...,v_n\}$ and edge set $E(G)=\{e_1,...,e_m\}$. The order and size of $G$ are defined as $|V(G)|$ and $|E(G)|$, respectively. An underlying graph $G$ together with a sign function $\sigma:E(G)\to\{-1,+1\}$  forms a signed graph $\Gamma=(G,\sigma)$. In a signed graph, edge signs are usually interpreted as $\pm1$. An edge $e$ is positive (resp. negative) if $\sigma(e)=+1$ (resp. $\sigma(e)=-1$). A cycle in $\Gamma$ is said to be positive if it contains an even number of negative edges, otherwise it is negative. $\Gamma=(G,\sigma)$ is balanced if there are no negative cycles, otherwise it is unbalanced. Let $U\subset V(G)$. The operation that changes the signs of all edges between $U$ and $V(G)\setminus U$ is called a switching operation. If a signed graph $\Gamma^\prime$ is obtained from $\Gamma$ by applying finitely many switching operations, then $\Gamma$ is said to be switching equivalent to $\Gamma^\prime$. For more details about the notion of signed graphs, we refer to \cite{1}. Signed graph was first introduced in works of Harary \cite{8} and Cartwright and Harary \cite{5}, and the matroids of graphs were extended to matroids of signed graphs by Zaslavsky \cite{19}. Chaiken \cite{6} and Zaslavsky \cite{19} obtained the Matrix-Tree Theorem for signed graph independently. The theory of signed graphs is a special case of that of gain graphs and of biased graphs \cite{20}. The adjacency matrix of $\Gamma$ is defined as $A(\Gamma)=(a_{ij}^\sigma)$, where $a^\sigma_{ij}=\sigma(v_iv_j)$ if $v_i\sim v_j $, otherwise, $a^\sigma_{ij}=0$. The eigenvalues of $\Gamma$ are written as\begin{figure}[H]
 	\centering
 	\ifpdf
 	\setlength{\unitlength}{1bp}%
 	\begin{picture}(380.48, 288.68)(0,0)
 		\put(0,0){\includegraphics{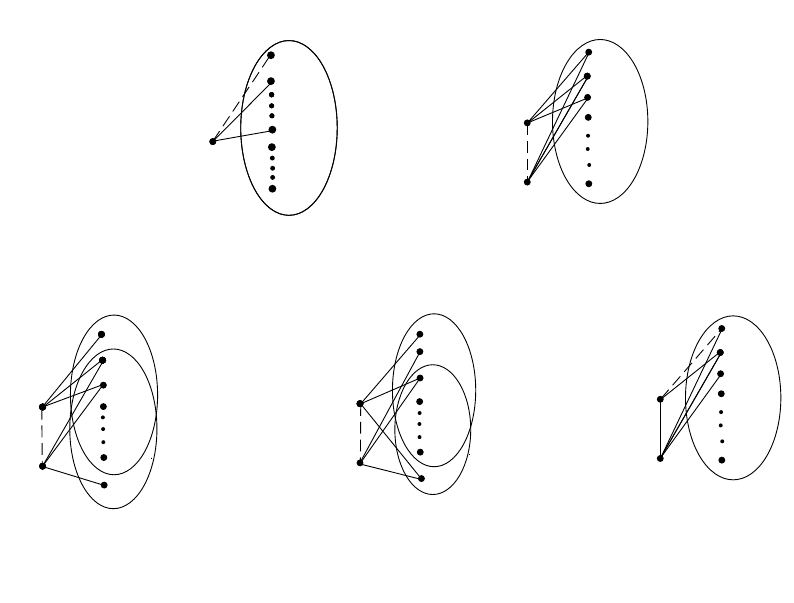}}
 		\put(132.18,246.90){\fontsize{11.38}{13.66}\selectfont $v_2$}
 		\put(133.21,225.29){\fontsize{11.38}{13.66}\selectfont $v_{t-1}$}
 		\put(132.86,215.00){\fontsize{11.38}{13.66}\selectfont $v_t$}
 		\put(133.21,195.28){\fontsize{11.38}{13.66}\selectfont $v_{n-1}$}
 		\put(132.35,259.76){\fontsize{11.38}{13.66}\selectfont $v_1$}
 		\put(92.20,218.75){\fontsize{11.38}{13.66}\selectfont $u$}
 		\put(128.55,175.19){\fontsize{11.38}{13.66}\selectfont $\Gamma_{n,t}$}
 		\put(5.67,60.82){\fontsize{11.38}{13.66}\selectfont $v_2$}
 		\put(52.47,112.73){\fontsize{11.38}{13.66}\selectfont $v_3$}
 		\put(51.95,67.20){\fontsize{11.38}{13.66}\selectfont $v_n$}
 		\put(104.64,239.71){\fontsize{11.38}{13.66}\selectfont $-$}
 		\put(10.08,75.97){\fontsize{11.38}{13.66}\selectfont $-$}
 		\put(125.24,273.62){\fontsize{11.38}{13.66}\selectfont $K_{n-1}$}
 		\put(43.81,142.84){\fontsize{11.38}{13.66}\selectfont $K_{n-3}$}
 		\put(47.66,31.19){\fontsize{11.38}{13.66}\selectfont $Z_1$}
 		\put(240.14,228.18){\fontsize{11.38}{13.66}\selectfont $v_1$}
 		\put(285.10,259.29){\fontsize{11.38}{13.66}\selectfont $v_3$}
 		\put(285.12,249.10){\fontsize{11.38}{13.66}\selectfont $v_4$}
 		\put(285.43,197.72){\fontsize{11.38}{13.66}\selectfont $v_n$}
 		\put(285.17,239.13){\fontsize{11.38}{13.66}\selectfont $v_5$}
 		\put(243.13,215.14){\fontsize{11.38}{13.66}\selectfont $-$}
 		\put(275.90,274.12){\fontsize{11.38}{13.66}\selectfont $K_{n-2}$}
 		\put(282.40,175.05){\fontsize{11.38}{13.66}\selectfont $U$}
 		\put(240.31,198.37){\fontsize{11.38}{13.66}\selectfont $v_2$}
 		\put(118.09,8.12){\fontsize{11.38}{13.66}\selectfont Fig.1.\label{Fig1} The signed graphs $\Gamma_{n,t},U, Z_1, Z_2, W$.}
 		\put(285.28,229.31){\fontsize{11.38}{13.66}\selectfont $v_6$}
 		\put(6.63,88.67){\fontsize{11.38}{13.66}\selectfont $v_1$}
 		\put(52.28,124.39){\fontsize{11.38}{13.66}\selectfont $v_5$}
 		\put(52.73,100.43){\fontsize{11.38}{13.66}\selectfont $v_4$}
 		\put(53.39,90.63){\fontsize{11.38}{13.66}\selectfont $v_7$}
 		\put(52.35,52.40){\fontsize{11.38}{13.66}\selectfont $v_6$}
 		\put(74.48,59.47){\fontsize{11.38}{13.66}\selectfont $K_{n-3}$}
 		\put(158.04,62.42){\fontsize{11.38}{13.66}\selectfont $v_2$}
 		\put(204.66,116.94){\fontsize{11.38}{13.66}\selectfont $v_6$}
 		\put(203.95,70.14){\fontsize{11.38}{13.66}\selectfont $v_n$}
 		\put(162.44,77.57){\fontsize{11.38}{13.66}\selectfont $-$}
 		\put(195.54,143.16){\fontsize{11.38}{13.66}\selectfont $K_{n-3}$}
 		\put(202.22,32.20){\fontsize{11.38}{13.66}\selectfont $Z_2$}
 		\put(159.00,90.27){\fontsize{11.38}{13.66}\selectfont $v_1$}
 		\put(204.65,125.99){\fontsize{11.38}{13.66}\selectfont $v_5$}
 		\put(204.76,104.49){\fontsize{11.38}{13.66}\selectfont $v_3$}
 		\put(204.12,57.19){\fontsize{11.38}{13.66}\selectfont $v_4$}
 		\put(224.95,61.82){\fontsize{11.38}{13.66}\selectfont $K_{n-3}$}
 		\put(204.83,93.27){\fontsize{11.38}{13.66}\selectfont $v_7$}
 		\put(346.88,34.85){\fontsize{11.38}{13.66}\selectfont $W$}
 		\put(301.98,95.52){\fontsize{11.38}{13.66}\selectfont $v_2$}
 		\put(348.94,126.63){\fontsize{11.38}{13.66}\selectfont $v_1$}
 		\put(348.96,116.44){\fontsize{11.38}{13.66}\selectfont $v_3$}
 		\put(349.27,65.06){\fontsize{11.38}{13.66}\selectfont $v_n$}
 		\put(349.01,106.47){\fontsize{11.38}{13.66}\selectfont $v_5$}
 		\put(320.98,112.49){\fontsize{11.38}{13.66}\selectfont $-$}
 		\put(339.74,141.46){\fontsize{11.38}{13.66}\selectfont $K_{n-2}$}
 		\put(303.16,65.71){\fontsize{11.38}{13.66}\selectfont $v_4$}
 		\put(349.12,96.65){\fontsize{11.38}{13.66}\selectfont $v_6$}
 	\end{picture}%
 	\else
 	\setlength{\unitlength}{1bp}%
 	\begin{picture}(380.48, 288.68)(0,0)
 		\put(0,0){\includegraphics{K3,3.pdf}}
 		\put(132.18,246.90){\fontsize{11.38}{13.66}\selectfont $v_2$}
 		\put(133.21,225.29){\fontsize{11.38}{13.66}\selectfont $v_{t-1}$}
 		\put(132.86,215.00){\fontsize{11.38}{13.66}\selectfont $v_t$}
 		\put(133.21,195.28){\fontsize{11.38}{13.66}\selectfont $v_{n-1}$}
 		\put(132.35,259.76){\fontsize{11.38}{13.66}\selectfont $v_1$}
 		\put(92.20,218.75){\fontsize{11.38}{13.66}\selectfont $u$}
 		\put(128.55,175.19){\fontsize{11.38}{13.66}\selectfont $\Gamma_{n,t}$}
 		\put(5.67,60.82){\fontsize{11.38}{13.66}\selectfont $v_2$}
 		\put(52.47,112.73){\fontsize{11.38}{13.66}\selectfont $v_3$}
 		\put(51.95,67.20){\fontsize{11.38}{13.66}\selectfont $v_n$}
 		\put(104.64,239.71){\fontsize{11.38}{13.66}\selectfont $-$}
 		\put(10.08,75.97){\fontsize{11.38}{13.66}\selectfont $-$}
 		\put(125.24,273.62){\fontsize{11.38}{13.66}\selectfont $K_{n-1}$}
 		\put(43.81,142.84){\fontsize{11.38}{13.66}\selectfont $K_{n-3}$}
 		\put(47.66,31.19){\fontsize{11.38}{13.66}\selectfont $Z_1$}
 		\put(240.14,228.18){\fontsize{11.38}{13.66}\selectfont $v_1$}
 		\put(285.10,259.29){\fontsize{11.38}{13.66}\selectfont $v_3$}
 		\put(285.12,249.10){\fontsize{11.38}{13.66}\selectfont $v_4$}
 		\put(285.43,197.72){\fontsize{11.38}{13.66}\selectfont $v_n$}
 		\put(285.17,239.13){\fontsize{11.38}{13.66}\selectfont $v_5$}
 		\put(243.13,215.14){\fontsize{11.38}{13.66}\selectfont $-$}
 		\put(275.90,274.12){\fontsize{11.38}{13.66}\selectfont $K_{n-2}$}
 		\put(282.40,175.05){\fontsize{11.38}{13.66}\selectfont $U$}
 		\put(240.31,198.37){\fontsize{11.38}{13.66}\selectfont $v_2$}
 		\put(118.09,8.12){\fontsize{11.38}{13.66}\selectfont Fig.1. \label{Fig1} The signed graphs $\Gamma_{n,t},U, Z_1, Z_2, W$.}
 		\put(285.28,229.31){\fontsize{11.38}{13.66}\selectfont $v_6$}
 		\put(6.63,88.67){\fontsize{11.38}{13.66}\selectfont $v_1$}
 		\put(52.28,124.39){\fontsize{11.38}{13.66}\selectfont $v_5$}
 		\put(52.73,100.43){\fontsize{11.38}{13.66}\selectfont $v_4$}
 		\put(53.39,90.63){\fontsize{11.38}{13.66}\selectfont $v_7$}
 		\put(52.35,52.40){\fontsize{11.38}{13.66}\selectfont $v_6$}
 		\put(74.48,59.47){\fontsize{11.38}{13.66}\selectfont $K_{n-3}$}
 		\put(158.04,62.42){\fontsize{11.38}{13.66}\selectfont $v_2$}
 		\put(204.66,116.94){\fontsize{11.38}{13.66}\selectfont $v_6$}
 		\put(203.95,70.14){\fontsize{11.38}{13.66}\selectfont $v_n$}
 		\put(162.44,77.57){\fontsize{11.38}{13.66}\selectfont $-$}
 		\put(195.54,143.16){\fontsize{11.38}{13.66}\selectfont $K_{n-3}$}
 		\put(202.22,32.20){\fontsize{11.38}{13.66}\selectfont $Z_2$}
 		\put(159.00,90.27){\fontsize{11.38}{13.66}\selectfont $v_1$}
 		\put(204.65,125.99){\fontsize{11.38}{13.66}\selectfont $v_5$}
 		\put(204.76,104.49){\fontsize{11.38}{13.66}\selectfont $v_3$}
 		\put(204.12,57.19){\fontsize{11.38}{13.66}\selectfont $v_4$}
 		\put(224.95,61.82){\fontsize{11.38}{13.66}\selectfont $K_{n-3}$}
 		\put(204.83,93.27){\fontsize{11.38}{13.66}\selectfont $v_7$}
 		\put(346.88,34.85){\fontsize{11.38}{13.66}\selectfont $W$}
 		\put(301.98,95.52){\fontsize{11.38}{13.66}\selectfont $v_2$}
 		\put(348.94,126.63){\fontsize{11.38}{13.66}\selectfont $v_1$}
 		\put(348.96,116.44){\fontsize{11.38}{13.66}\selectfont $v_3$}
 		\put(349.27,65.06){\fontsize{11.38}{13.66}\selectfont $v_n$}
 		\put(349.01,106.47){\fontsize{11.38}{13.66}\selectfont $v_5$}
 		\put(320.98,112.49){\fontsize{11.38}{13.66}\selectfont $-$}
 		\put(339.74,141.46){\fontsize{11.38}{13.66}\selectfont $K_{n-2}$}
 		\put(303.16,65.71){\fontsize{11.38}{13.66}\selectfont $v_4$}
 		\put(349.12,96.65){\fontsize{11.38}{13.66}\selectfont $v_6$}
 	\end{picture}%
 	\fi
 \end{figure}
\noindent $\lambda_1(A(\Gamma))\geq\lambda_{2}(A(\Gamma))\geq\cdots\geq\lambda_n(A(\Gamma))$  in decreasing  order which are the eigenvalues of $A(\Gamma)$ and $\lambda_1(A(\Gamma))$ is the index of $\Gamma$. 

Given a set $\mathcal{F}$ of graph $G$, if graph $G$ contains no  subgraph isomorphic to any one in $\mathcal{F}$, then $G$ is called $\mathcal{F}$-free. The classical spectral Tur\'{a}n problem is to determine the maximum spectral radius of an $\mathcal{F}$-free graph of order $n$, which is known as the spectral Tur\'{a}n number of $\mathcal{F}$. This problem was originally proposed by Nikiforov \cite{13}. Tur\'{a}n \cite{14} raised and solved the extremal problem for ${K}_{r}$-free graphs with $r\geq 3$. For more on the spectral Tur\'{a}n problem for unsigned graphs see \cite{t2,ZZ2,t6,ZZ3}.

In this paper, we  focus on the spectral Tur\'{a}n problem in signed graphs. It is worth noting that  Brunetti and Stani\'{c} \cite{4} studied the extremal spectral radius among all unbalanced connected signed graphs. For the maximum index of a signed graph, see \cite{QQQ1,3,9,QQ1,12}. Let $\mathcal{K}_r^-$ and $\mathcal{C}_r^-$ be the sets of all unbalanced signed graphs with underlying graphs $K_r$ and $C_r$, respectively.  Chen and Yuan \cite{7} and Wang \cite{k1}  gave the spectral Tur\'{a}n number of $\mathcal{K}_{4}^{-}$ and $\mathcal{K}_{5}^{-}$, respectively. Xiong and Hou \cite{k2} determined the spectral Tur\'{a}n number of $\mathcal{K}_r^-$ for $6\leq r<\frac n2$. In 2022, Wang, Hou and Li \cite{15} determined the spectral Tur\'{a}n number of $\mathcal{C}_{3}^{-}$.  Moreover, the $\mathcal{C}_{2k+1}^{-}$-free unbalanced signed graphs of fixed order $n$ with maximum index have been determined in \cite{18}, where $3 \leq k \leq n/10-1$. Let $\mathcal{K}_{s,t}^-$ be the set of all unbalanced signed graphs with underlying graphs $K_{s,t}$. Motivated by these works, we focus on the spectral Tur\'{a}n problem of $\mathcal{K}_{s,t}^-$-free unbalanced signed graphs. Since the cases where $s=1$ or $t=1$ do not conform to the definition of  $\mathcal{K}_{s,t}^-$, it follows that $s,t\geq 2$.  Wang and Lin \cite{16} have solved the case of $s=t=2$ since in this situation $\mathcal{K}_{2,2}^-$ is $\mathcal{C}_{4}^{-}$. This paper gives an answer for $s=t=3$ and  completely characterizes the corresponding extremal signed graphs. In Fig.\ref {Fig1}, we use dashed lines to represent negative edges and solid lines to represent positive edges. Let $\Gamma_{n,t}$ be the signed graph obtained from a copy of $K_{n-1}$ with vertex set $\{v_1,...,v_{n-1}\}$ by adding a new vertex $u$ and $t-1$ edges $uv_1,...,uv_{t-1}$, where $uv_1$ is the unique negative edge. The main result of this paper is as follows.
\begin{thm}\label{thm1}
 Let $\Gamma$ be a $\mathcal{K}_{3,3}^{-}$-free unbalanced signed graph of order $n$ $(n\geq 7)$. Then $\lambda_1(A(\Gamma))\leq n-2$, with equality if and only if $\Gamma$ is switching isomorphic to $\Gamma_{n,3}$. 
\end{thm}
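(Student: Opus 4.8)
The plan is to combine a switching normalization of the eigenvector with the near-completeness forced by a large index and the sign restrictions forced by $\mathcal{K}_{3,3}^-$-freeness. First I would exploit that switching is a diagonal $\pm1$ similarity and hence preserves the spectrum: taking a unit eigenvector $\mathbf{x}$ for $\lambda_1(A(\Gamma))$ and switching at $\{v_i:x_i<0\}$, I may assume $\mathbf{x}\ge 0$. Writing $A(\Gamma)=A^+-A^-$ and $A(G)=A^++A^-$ for the underlying graph $G$, the Rayleigh quotient gives
\[
\lambda_1(A(\Gamma))=\mathbf{x}^\top A(G)\mathbf{x}-4\!\!\sum_{ij\in E^-}\!\! x_ix_j\le \lambda_1(A(G))\le n-1 ,
\]
where $E^-$ is the set of negative edges. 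This yields the naive bound $n-1$; the whole point is to save the extra unit and to force equality only at $\Gamma_{n,3}$.

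Second, I would turn a large index into near-completeness. Assuming $\lambda_1(A(\Gamma))\ge n-2$, the display gives $\lambda_1(A(G))\ge n-2$, and a standard Hong-type edge estimate then forces $\overline{G}$ to have at most $n-2$ edges, so $G$ is $K_n$ minus a sparse graph and contains a large clique $S$. The key structural lemma I would prove is that $\mathcal{K}_{3,3}^-$-freeness kills negative short even cycles inside any clique: if $S$ induces a complete subgraph with $|S|\ge 6$, then the signed graph on $S$ has no negative $4$-cycle, because any such cycle $w_1w_2w_3w_4$ extends—by adjoining one further vertex of $S$ to each side, i.e.\ to parts $\{w_1,w_3,w_5\}$ and $\{w_2,w_4,w_6\}$—to a $K_{3,3}$ that contains a negative cycle and is therefore unbalanced, contradicting $\mathcal{K}_{3,3}^-$-freeness. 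A complete signed graph with no negative $4$-cycle is, fixing one vertex and switching its incident edges positive, forced to have constant sign on the remaining edges, hence is switching equivalent either to the all-positive clique or to $-K_{|S|}$; since $\lambda_1(-K_{|S|})=1$, in our high-index regime $S$ is, after a global switching, an all-positive clique.

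Third, with $S$ an all-positive clique covering all but a bounded number of vertices, I would examine the few remaining vertices and non-edges. Unbalancedness forces at least one negative cycle, while the lemma forbids negative $4$-cycles meeting the dense core; hence the frustration must be carried by a low-degree vertex forming a negative triangle, exactly as $u$ does in $\Gamma_{n,3}$. Feeding the resulting near-clique structure into the eigenvalue equations (equivalently, a small quotient matrix) should show that the index is maximized precisely when this extra vertex has degree $2$, joined by one positive and one negative edge to two clique vertices; there the eigenvector vanishes at $u$ and $\lambda_1=\lambda_1(K_{n-1})=n-2$, which is exactly $\Gamma_{n,3}$.

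The main obstacle will be this last step: controlling the bounded but nonzero number of exceptional vertices and non-edges, and proving that none of the competing configurations—attaching the unbalancing vertex to more clique vertices, moving or adding negative edges, or deleting clique edges—can beat $\Gamma_{n,3}$. This requires interlocking the $\mathcal{K}_{3,3}^-$-free restriction with the unbalancedness requirement and with local eigenvector estimates, and separately checking the small orders $n$ where the extracted clique need not yet be large enough for the extension argument to apply directly.
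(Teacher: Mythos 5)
Your opening moves are sound and partly coincide with the paper's (the switching normalization of the eigenvector is exactly the paper's Lemma 4), and your structural lemma is correct and even elegant: inside a clique on at least $6$ vertices, a negative $4$-cycle extends to an unbalanced $K_{3,3}$, so the induced complete signed graph switches to $(K_s,+)$ or to its all-negative counterpart. The gap is the bridge from the edge count to the structure you need. From $\lambda_1(A(G))\ge n-2$ and Hong's bound you get $e(\overline G)\le n-2$, but from that you can only extract a clique of size roughly $n/3$ (greedy/Tur\'an), not one ``covering all but a bounded number of vertices''. This is not a repairable oversight in the estimate: the all-positive cocktail-party graph $K_{(n/2)\times 2}$, whose complement is a perfect matching and hence satisfies the edge bound, has $\lambda_1=n-2$ exactly, yet its largest clique has only $n/2$ vertices. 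So a spectral hypothesis of this strength simply does not force a clique on $n-O(1)$ vertices, and the premise of your third step collapses. Moreover, since your structural lemma needs $|S|\ge 6$ and the clique you can certify has size about $n/3$, the argument only engages for roughly $n\ge 18$, while the theorem is claimed for all $n\ge 7$; the range $7\le n\le 17$ is left entirely open. (Also, discarding the all-negative alternative for $S$ needs its own argument --- interlacing plus the trace of $A^2$ works, but again only when $|S|$ is large compared with $\sqrt{n}$.)

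The second, deeper problem is that your final step --- feeding the near-clique structure into the eigenvalue equations to show that only $\Gamma_{n,3}$ survives --- is precisely where essentially all of the difficulty lives, and your proposal defers it rather than solves it. In the paper this is handled as follows: after the same normalization, perturbation arguments (delete or reverse a negative edge; move an edge toward a vertex with larger eigenvector entry) show that the extremal graph has exactly one negative edge, lying in an unbalanced triangle; the proof then splits on whether an unbalanced $K_4$ is present. If not, Chen and Yuan's spectral theorem for $\mathcal{K}_4^-$-free unbalanced signed graphs finishes immediately; if so, a six-subcase analysis on the degrees of the two endpoints of the negative edge, with further eigenvector-perturbation estimates and explicit quotient-matrix comparisons against the auxiliary graphs $U$, $Z_1$, $Z_2$, $W$, rules everything out. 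The competing configurations you acknowledge as ``the main obstacle'' (the unbalancing vertex attached to more clique vertices, missing clique edges, relocated negative edges) are exactly these subcases, and excluding them requires either the $\mathcal{K}_4^-$ theorem as a black box or the detailed calculations --- neither of which your plan supplies.
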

\section{Preliminaries}\label{se2}
Let $M$ be a real symmetric matrix with block form $M= [M_{ij}]$, and $q_{ij}$ be the average row sum of $M_{ij}$. Let $Q=(q_{ij})$ be the quotient matrix of $M$. Furthermore, $Q$ is referred to as an equitable quotient matrix if every block $M_{ij}$ has a constant row sum. Let Spec$(Q)=\{\lambda_1^{[t_1]},...,\lambda_k^{[t_k]}\}$ be the spectrum of $Q$, where eigenvalue 
$\lambda_i$ has multiplicity $t_i$ for $1 \leq i \leq k$. Let $P_{Q}(\lambda)=det(\lambda I-Q)$ denote the characteristic polynomial of $Q$. The matrix $J_{r\times s}$ is the all-one matrix of size $r \times s$, and when $r=s$, it is denoted by $J_r$. Also, we use $j_k=(1,\ldots,1)^T\in R^k$.
\begin{lemma}\label{el}\cite{21}
	There are two kinds of eigenvalues of the real symmetric matrix $M$.

	(i) The eigenvalues match the eigenvalues of $Q$.
	
	(ii) The eigenvalues of $M$ not in Spec$(Q)$ are unchanged when $\alpha $J is add to block $M_{ij}$ for every $1\leq i,j \leq m$, where  $\alpha$ is any constant. Moreover, $\lambda_1(M)=\lambda_1(Q)$ when $M$ is irreducible and nonnegative.
\end{lemma}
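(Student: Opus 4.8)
\section*{Proof proposal}

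The plan is to prove the two parts of the lemma separately, starting from the defining property of the equitable quotient matrix. Throughout, let $M$ be partitioned as $M=[M_{ij}]_{1\le i,j\le m}$ with each block $M_{ij}$ having constant row sum $q_{ij}$, so that $Q=(q_{ij})$ is the equitable quotient matrix, and let the underlying partition of the index set be $V_1\cup\cdots\cup V_m$ with $|V_i|=n_i$.

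For part (i), the key observation is that the constant-row-sum condition means $M$ respects the partition in the following sense. Let $S$ be the $(\sum n_i)\times m$ characteristic matrix whose $i$-th column is the indicator vector of $V_i$; that is, the columns of $S$ are $j_{n_1},\dots,j_{n_m}$ placed in the appropriate blocks. The constant-row-sum property is then exactly the matrix identity $MS=SQ$. First I would verify this identity block by block: the row of $MS$ corresponding to a vertex in $V_i$ has, in its $k$-th column, the sum of the entries of $M_{ik}$ in that row, which equals $q_{ik}$ by hypothesis; this is precisely the $(i,k)$ entry of $SQ$. Given $MS=SQ$, if $Qv=\lambda v$ for some eigenvector $v\in\mathbb{R}^m$, then $M(Sv)=S(Qv)=\lambda(Sv)$, and since the columns of $S$ have disjoint supports, $Sv\neq 0$ whenever $v\neq 0$. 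Hence every eigenvalue of $Q$ is an eigenvalue of $M$, with eigenvector $Sv$ constant on each cell of the partition.

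For part (ii), the claim is that the eigenvalues of $M$ not in $\mathrm{Spec}(Q)$ arise from eigenvectors orthogonal to the column space of $S$, and that these are unaffected by adding $\alpha J$ to any block. The strategy is to decompose $\mathbb{R}^{\sum n_i}$ as the orthogonal direct sum of $\mathcal{U}=\mathrm{Col}(S)$ (the space of vectors constant on each cell) and its complement $\mathcal{U}^{\perp}$ (the space of vectors summing to zero on each cell). Since $MS=SQ$ shows $\mathcal{U}$ is $M$-invariant and $M$ is symmetric, $\mathcal{U}^{\perp}$ is also $M$-invariant, and the spectrum of $M$ splits as the union of the spectrum of $M$ restricted to $\mathcal{U}$ (which is $\mathrm{Spec}(Q)$, by part (i) together with a rank count) and the spectrum on $\mathcal{U}^{\perp}$. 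The crucial point is that replacing block $M_{ij}$ by $M_{ij}+\alpha J_{n_i\times n_j}$ alters $M$ only by a matrix supported on $\mathcal{U}$: for any $w\in\mathcal{U}^{\perp}$, the product $J_{n_i\times n_j}w|_{V_j}$ vanishes because $w$ sums to zero on $V_j$, so the perturbation annihilates $\mathcal{U}^{\perp}$ and the restriction of $M$ to $\mathcal{U}^{\perp}$ is unchanged. This yields the invariance of the non-$Q$ eigenvalues. For the final assertion, when $M$ is irreducible and nonnegative the Perron--Frobenius theorem gives a positive Perron eigenvector for $\lambda_1(M)$; I would argue that the corresponding eigenvalue must coincide with $\lambda_1(Q)$ by exhibiting, after a suitable $\alpha J$ shift making the Perron eigenvector constant on cells (or by a direct interlacing/averaging comparison), that the top of the spectrum is carried by $\mathcal{U}$.

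The main obstacle I anticipate is the last sentence of part (ii): establishing $\lambda_1(M)=\lambda_1(Q)$ rigorously. The subtlety is that a priori the largest eigenvalue could live on $\mathcal{U}^{\perp}$; ruling this out requires the nonnegativity and irreducibility hypotheses in an essential way, typically via Perron--Frobenius (the Perron eigenvector is strictly positive, hence not orthogonal to the all-ones-on-cells vectors spanning $\mathcal{U}$) combined with the fact that the Perron value is simple. I would make this precise by showing that the Perron eigenvector of $M$ has nonzero projection onto $\mathcal{U}$, and since $\mathcal{U}$ is $M$-invariant, this projection is itself a nonnegative eigenvector realizing $\lambda_1(M)$ within $\mathcal{U}$, forcing $\lambda_1(M)\in\mathrm{Spec}(Q)$ and then $\lambda_1(M)=\lambda_1(Q)$ by the maximality of the Perron value of the nonnegative matrix $Q$.
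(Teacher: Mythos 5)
The paper offers no proof of this lemma at all: it is quoted verbatim from the cited reference \cite{21} (Brouwer--Haemers), so there is nothing internal to compare against. Your proposal is correct and is in fact the standard argument for equitable quotient matrices: the identity $MS=SQ$ for the characteristic matrix $S$, the $M$-invariant orthogonal decomposition $\mathbb{R}^n=\mathrm{Col}(S)\oplus\mathrm{Col}(S)^{\perp}$ (using symmetry of $M$ for invariance of the complement, and the fact that $M$ restricted to $\mathrm{Col}(S)$ is similar to $Q$ via $S$), the observation that a block-constant perturbation $\alpha J$ annihilates $\mathrm{Col}(S)^{\perp}$, and Perron--Frobenius for the last assertion. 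Two small refinements: for the final claim, once you write the Perron eigenvector as $x=u+w$ with $u\in\mathrm{Col}(S)$, $w\in\mathrm{Col}(S)^{\perp}$, invariance of both subspaces gives $Mu=\lambda_1(M)u$ with $u\neq 0$ (indeed $u>0$, since orthogonal projection onto $\mathrm{Col}(S)$ replaces entries by cell averages of the positive vector $x$), so $\lambda_1(M)\in\mathrm{Spec}(Q)$, and the conclusion follows most cleanly from part (i) itself ($\lambda_1(Q)\le\lambda_1(M)$ because every eigenvalue of $Q$ is an eigenvalue of $M$) rather than from any appeal to the Perron value of $Q$; and the parenthetical alternative of ``a suitable $\alpha J$ shift making the Perron eigenvector constant on cells'' is a dead end and should be dropped in favor of the projection argument you already give.
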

\begin{lemma}\label{l1} \cite{k2}

	(i) $\lambda_1(A(\Gamma_{n,t}))$ is the largest root of $g_{n,t}(x)=0$, where
	\begin{center}
		$g_{n,t}(x)=x^3-(n-3)x^2-(n+t-3)x-t^2+(n+4)t-n-7$.
	\end{center}

	(ii) $n-2\leq\lambda_1(A(\Gamma_{n,t}))<n-1$, with left equality if and only if $t=3$.
\end{lemma}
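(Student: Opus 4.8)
The plan is to compute the spectrum of $A(\Gamma_{n,t})$ explicitly via an equitable partition and then read off both claims from the resulting cubic. For part (i), first I would partition $V(\Gamma_{n,t})$ into the four cells $\{u\}$, $\{v_1\}$, $\{v_2,\dots,v_{t-1}\}$, $\{v_t,\dots,v_{n-1}\}$, separating $u$, its negative neighbor $v_1$, its positive neighbors, and the vertices of $K_{n-1}$ missed by $u$. Since all edges inside $K_{n-1}$ are positive and $u$ meets each cell with a constant sign pattern, this partition is equitable, with quotient matrix
\[
B=\begin{pmatrix} 0 & -1 & t-2 & 0 \\ -1 & 0 & t-2 & n-t \\ 1 & 1 & t-3 & n-t \\ 0 & 1 & t-2 & n-t-1 \end{pmatrix}.
\]
By Lemma~\ref{el} the four eigenvalues of $B$ are eigenvalues of $A(\Gamma_{n,t})$, and the remaining $n-4$ eigenvalues are carried by vectors summing to zero on the two large cells. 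Because $\{v_2,\dots,v_{t-1}\}$ and $\{v_t,\dots,v_{n-1}\}$ each induce a positive clique whose members have identical external neighborhoods, every such zero-sum vector is an eigenvector for eigenvalue $-1$; this accounts for $(t-3)+(n-t-1)=n-4$ eigenvalues, all equal to $-1$. Hence $\mathrm{Spec}(A(\Gamma_{n,t}))=\mathrm{Spec}(B)\cup\{(-1)^{[n-4]}\}$.

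Next I would verify that $-1\in\mathrm{Spec}(B)$, i.e. $\det(B+I)=0$: the rows of $B+I$ obey a single linear dependence, so $(x+1)\mid P_B(x)$, and a direct expansion gives $P_B(x)=(x+1)g_{n,t}(x)$ with $g_{n,t}$ exactly the stated cubic (the trace $-(n-4)$, the determinant, and a test case such as $n=7,t=3$ give cheap checks). Finally, since $A(\Gamma_{n,t})$ has zero diagonal and at least one positive edge, its Rayleigh quotient is positive on a suitable unit vector, so $\lambda_1(A(\Gamma_{n,t}))>0>-1$. Therefore the top eigenvalue is the largest eigenvalue of $B$, which is the largest root of $g_{n,t}$, establishing (i).

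For part (ii), the upper bound is cleanest by domination through the underlying graph: $\lambda_1(A(\Gamma_{n,t}))\le\lambda_1(|A(\Gamma_{n,t})|)$, which equals the index of the underlying graph; that underlying graph is a spanning subgraph of $K_n$, so its index is at most $n-1$, and since $\Gamma_{n,t}$ is unbalanced the inequality $\lambda_1(A(\Gamma_{n,t}))<\lambda_1(|A(\Gamma_{n,t})|)$ is strict, giving $\lambda_1<n-1$. For the lower bound I would evaluate the cubic at $n-2$ and obtain the clean identity $g_{n,t}(n-2)=-(t-3)^2\le 0$. As $g_{n,t}$ is monic with largest real root $\lambda_1$, we have $g_{n,t}(x)>0$ for all $x>\lambda_1$, so $g_{n,t}(n-2)\le 0$ forces $\lambda_1\ge n-2$; when $t\ne 3$ the value is strictly negative, giving $\lambda_1>n-2$. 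When $t=3$ one factors $g_{n,3}(x)=(x-(n-2))(x+2)(x-1)$, whose largest root is exactly $n-2$, so equality holds precisely for $t=3$.

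The main obstacle is organizational rather than conceptual: one must pin down the equitable partition correctly and confirm that the $n-4$ eigenvalues outside $\mathrm{Spec}(B)$ are all $-1$, so that the genuinely largest eigenvalue is guaranteed to be the largest root of the cubic even though $A(\Gamma_{n,t})$ is indefinite and the Perron-type conclusion of Lemma~\ref{el}(ii) does not apply directly. Once the factorization $P_B(x)=(x+1)g_{n,t}(x)$ is secured, the evaluations $g_{n,t}(n-2)=-(t-3)^2$ and the underlying-graph bound do the rest (the identity $g_{n,t}(n-1)>0$ provides a corroborating check on the upper bound).
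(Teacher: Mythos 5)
The paper never proves this lemma --- it is imported verbatim from \cite{k2} --- so there is no internal proof to compare against; judged on its own, your argument is correct. I checked the key computations: your quotient matrix $B$ for the partition $\{u\},\{v_1\},\{v_2,\dots,v_{t-1}\},\{v_t,\dots,v_{n-1}\}$ is right, the identity $P_B(x)=(x+1)\,g_{n,t}(x)$ holds, $g_{n,t}(n-2)=-(t-3)^2$, and $g_{n,3}(x)=(x-(n-2))(x+2)(x-1)$. Your route is the same quotient-matrix machinery the paper itself uses to prove Lemma \ref{ll3}, with one worthwhile refinement: you identify the $n-4$ non-quotient eigenvalues directly as $-1$ via zero-sum vectors on the two clique cells, which sidesteps the Perron clause of Lemma \ref{el}(ii) (that clause requires $M$ nonnegative and irreducible, which $A(\Gamma_{n,t})$ is not); the paper instead reaches the same conclusion by the $\alpha J$-shifting trick of Lemma \ref{el}(ii) combined with $\lambda_1(Q)>0$, and either mechanism is sound. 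One small caveat on your upper bound: the strict inequality $\lambda_1(A(\Gamma_{n,t}))<\lambda_1(|A(\Gamma_{n,t})|)$ uses unbalancedness, which holds only for $t\geq 3$ (for $t=2$ the graph $\Gamma_{n,2}$ is balanced, having no cycle through $u$); but in that case the underlying graph is a proper spanning subgraph of $K_n$, so strict monotonicity of the index of connected graphs under edge addition already gives $\lambda_1<n-1$, and in the intended range $3\leq t\leq n-1$ both halves of your chain are available, so the conclusion stands.
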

 Let $U$ be the signed graph obtained from a copy of $K_{n-2}$ with vertex set $\{v_3,...,v_{n}\}$ by adding two new vertices $v_1$, $v_2$ and $7$ edges $v_1v_2,v_1v_3,v_1v_4,v_1v_5,v_2v_3,v_2v_4,v_2v_5$, where $v_1v_2$ is the unique negative edge. Let $Z_1$ be the signed graph obtained from two copies of $K_{n-3}$ with vertex set $\{v_3,v_4,v_5,v_7,...,v_{n}\}$ and $\{v_3,v_4,v_6,...,v_{n}\}$ by adding two new vertices $v_1$, $v_2$ and $7$ edges $v_1v_2,v_1v_3,v_1v_4,v_1v_5,v_2v_3,v_2v_4,v_2v_6$, where $v_1v_2$ is the unique negative edge.  Let $Z_2$ be the signed graph obtained from two copies of $K_{n-3}$ with vertex set $\{v_3,v_5,...,v_{n}\}$ and $\{v_3,v_4,v_7,...,v_{n}\}$ by adding two new vertices $v_1$, $v_2$ and $7$ edges $v_1v_2,v_1v_3,v_1v_4,v_1v_5,v_2v_3,v_2v_4,v_2v_6$, where $v_1v_2$ is the unique negative edge. Let $W$ be the signed graph obtained from a copy of $K_{n-2}$ with vertex set $\{v_1,v_3,v_5,...,v_{n}\}$ by adding two new vertices $v_2$, $v_4$ and $6$ edges $v_2v_1,v_2v_3,v_2v_4,v_4v_1,v_4v_3,v_4v_5$, where $v_1v_2$ is the unique negative edge.  The above four graphs are shown in Fig.\ref {Fig1}.
\begin{lemma}\label{ll3}
	Let $n\geq 7$ be a positive integer and $\Gamma_{n,3}, U, Z_1, Z_2, W$ be the graphs depicted in Fig.\ref {Fig1}. Then 
\begin{center}
	$\lambda_1(A(\Gamma_{n,3}))>\max\{\lambda_1(A(U)),\lambda_1(A(Z_1)),\lambda_1(A(Z_2)),\lambda_1(A(W))\}$.
\end{center}
\end{lemma}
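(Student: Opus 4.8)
The plan is to split the lemma into four separate strict inequalities. By Lemma \ref{l1}(ii) we have $\lambda_1(A(\Gamma_{n,3}))=n-2$ exactly (this is the left equality in the case $t=3$), so it suffices to show that each of $\lambda_1(A(U))$, $\lambda_1(A(Z_1))$, $\lambda_1(A(Z_2))$, $\lambda_1(A(W))$ is strictly less than $n-2$ for every $n\geq 7$. Each of the four signed graphs is highly structured — built from one or two all-positive cliques together with two extra vertices attached to a bounded number of clique vertices and joined by the unique negative edge — so I would treat each one by exhibiting an equitable partition adapted to its (switching-)automorphisms and applying Lemma \ref{el}.

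Concretely, for $U$ the partition $\{v_1,v_2\},\{v_3,v_4,v_5\},\{v_6,\dots,v_n\}$ is equitable with quotient matrix
\[
Q_U=\begin{pmatrix} -1 & 3 & 0 \\ 2 & 2 & n-5 \\ 0 & 3 & n-6 \end{pmatrix},
\]
for $Z_1$ the partition $\{v_1,v_2\},\{v_3,v_4\},\{v_5,v_6\},\{v_7,\dots,v_n\}$ is equitable with a $4\times4$ quotient, $Z_2$ admits an analogous $4\times4$ quotient, while $W$ — whose pendant-type vertices $v_2,v_4$ are not interchangeable and whose $v_1,v_3$ are distinguished by the sign of the edge to $v_2$ — requires the finer partition $\{v_1\},\{v_3\},\{v_5\},\{v_6,\dots,v_n\},\{v_2\},\{v_4\}$ with a $6\times6$ quotient. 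In each case the span $\mathcal{I}$ of the block-indicator vectors and its orthogonal complement $W_0$ (vectors summing to zero on every block) are both $A(\Gamma)$-invariant, since $A(\Gamma)$ is symmetric and the partition is equitable; hence $\lambda_1(A(\Gamma))=\max\{\lambda_1(Q),\,\lambda_1(A(\Gamma)|_{W_0})\}$.

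I would then dispose of the $W_0$-part cheaply. On $W_0$ the multi-vertex clique blocks contribute only the eigenvalue $-1$, arising from the $J-I$ structure of a positive clique with uniform external attachments, while the remaining low-dimensional action (on the block-difference amplitudes) has spectral radius bounded by an absolute constant independent of $n$; for instance in $Z_1$ this action is governed by $\left(\begin{smallmatrix}1&1\\1&0\end{smallmatrix}\right)$ together with a $-1$, giving spectral radius $(1+\sqrt5)/2$, and in $W$ the $W_0$-part is simply $-1$. Thus for all $n\geq 7$ one has $\lambda_1(A(\Gamma)|_{W_0})$ bounded by a constant strictly below $n-2$, and the spectral radius is controlled entirely by the quotient. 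It remains to prove $\lambda_1(Q)<n-2$ for each quotient. I would do this via the characteristic polynomial $P_Q(\lambda)$: since $P_Q$ is monic, it suffices to verify $P_Q^{(k)}(n-2)>0$ for every $k\geq 0$, because the Taylor expansion $P_Q(\lambda)=\sum_k \frac{P_Q^{(k)}(n-2)}{k!}\,(\lambda-(n-2))^k$ then has only nonnegative terms and a positive constant term for $\lambda\geq n-2$, forcing every root below $n-2$. For $U$ this reduces to $P_{Q_U}(n-2)=n^2-2n-23>0$ (valid for $n\geq 7$) plus the routine positivity of the lower derivatives; the $Z_1,Z_2,W$ cases are the same computation one or two degrees higher.

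The main obstacle is bookkeeping rather than conceptual: one must correctly read off each graph's adjacency and sign pattern (notably the overlapping-clique descriptions of $Z_1,Z_2$ and the asymmetric placement of the negative edge in $W$), verify equitability block by block, and carry out the sign analysis at $\lambda=n-2$ for a $4\times4$ and a $6\times6$ quotient. A secondary care point is the justification that the Perron value is carried by the quotient and not by $W_0$: because $A(\Gamma)$ is not nonnegative, the final clause of Lemma \ref{el} does not apply directly, so I rely instead on the explicit invariant-subspace splitting $\mathcal{I}\oplus W_0$ together with the constant bound on the $W_0$-eigenvalues. Once these bounds and the four polynomial inequalities are established, taking the maximum over the four graphs yields the claimed strict inequality against $\lambda_1(A(\Gamma_{n,3}))=n-2$.
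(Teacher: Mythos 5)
Your proposal is sound, and its skeleton is the same as the paper's: reduce via Lemma \ref{l1}(ii) to showing $\lambda_1<n-2$ for each of $U,Z_1,Z_2,W$, then prove this with equitable quotient matrices and positivity of the characteristic polynomial (and its derivatives) at $n-2$. Where you genuinely diverge is in the surrounding linear algebra. The paper keeps every special vertex as a singleton block (quotients of sizes $4,6,7,6$) and dismisses the non-quotient eigenvalues by the $\alpha J$-perturbation clause of Lemma \ref{el}(ii): after adding multiples of $J$ to the blocks, the perturbed matrix has spectrum in $\{-1,0\}$, so all eigenvalues of $A$ outside $\mathrm{Spec}(Q)$ are at most $0<\lambda_1(Q)$. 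You instead coarsen the partition along the sign-preserving symmetries (merging $\{v_1,v_2\}$, etc.), push the antisymmetric part into the invariant complement $W_0$, and bound it there by absolute constants. The two computations are the same factorization organized differently: the paper's polynomial factors $(\lambda-1)$, $(\lambda^2-\lambda-1)$, $(\lambda^2-2)$, $(\lambda+1)$ are exactly your $W_0$-actions, and your smaller quotients reproduce the cofactors (for $U$ your $3\times3$ quotient has characteristic polynomial $\lambda^3+(5-n)\lambda^2+(1-2n)\lambda+5n-33$, the paper's cubic factor, with the same value $n^2-2n-23$ at $n-2$). Your version buys lower-degree polynomials and a self-contained justification that the Perron value sits in the quotient — you rightly observe that the nonnegativity clause of Lemma \ref{el} is unavailable for signed matrices, a point the paper handles with the $\alpha J$ device instead — at the cost of verifying the $W_0$-action by hand. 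One bookkeeping slip, not a gap: $Z_2$ admits no $4\times4$ equitable quotient analogous to $Z_1$'s, since $v_3$ and $v_4$ cannot share a block there ($v_3\sim v_5,v_6$ while $v_4\not\sim v_5,v_6$); the symmetry-adapted partition is $\{v_1,v_2\},\{v_3\},\{v_4\},\{v_5,v_6\},\{v_7,\dots,v_n\}$, a $5\times5$ quotient whose characteristic polynomial is the quintic factor of the paper's $P_{Q_3}$, with the $W_0$-part contributing $\pm\sqrt2$ and $-1$'s. With that correction, your plan goes through for all four graphs.
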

\begin{proof}
 We give $A(U)$ and its corresponding quotient matrix $Q_1$ by the vertex partition $V_1=\{v_1\}$, $V_2=\{v_2\}$, $V_3=\{v_3,v_4,v_5\}$ and $V_4=\{v_6,...,v_n\}$ as follows
 \begin{center}
 $A(U)= \begin{bmatrix} 0 & -1 & j_3^T & \bf{0}^T \\ -1 & 0 & j_3^T &  \bf{0}^T \\ j_3 & j_3 & (J-I)_3 & J_{3\times(n-5)} \\  \bf{0} &  \bf{0} & J_{(n-5)\times3} & (J-I)_{n-5} \end{bmatrix}$ and $Q_1= \begin{bmatrix} 0 & -1 & 3 & 0 \\ -1 & 0 & 3 & 0 \\ 1 & 1 & 2 & n-5 \\ 0 & 0 & 3 & n-6 \end{bmatrix}$.
 \end{center}
Note that the characteristic polynomial of $Q_1$ is
\begin{center}
$P_{Q_1}(\lambda)=(\lambda-1)(\lambda^3+(5-n)\lambda^2+(1-2n)\lambda+5n-33)$.
\end{center}
 Adding $\alpha $J to the blocks of $A(U)$, where $\alpha $ is constant, then
\begin{center}
$A_1= \begin{bmatrix} 0 & 0 & \bf{0}^T & \bf{0^T} \\ 0 & 0 & \bf{0}^T & \bf{0}^T \\ \bf0 & \bf0 & -I_3 & \bf{0}^T \\ \bf{0} & \bf{0} & \bf0 & -I_{n-5} \end{bmatrix}$.
\end{center}
Since $\lambda_1(Q_1)>0$ and Spec$(A_1)$$=\begin{Bmatrix}-1^{[n-2]},0^{[2]}\end{Bmatrix}$,  $\lambda_1(A(U))=\lambda_1(Q_1)$. Let $P_{Q_{11}}(\lambda)=\lambda^3+(5-n)\lambda^2+(1-2n)\lambda+5n-33$. Then $P_{Q_{11}}^{\prime}(\lambda)=3\lambda^2+(10-2n)\lambda+1-2n$. Note that the maximal solution of $P_{Q_{11}}^{\prime}(\lambda)=0$ is $\frac{n-5+\sqrt{n^2-4n+22}}{3}<n-2$, and $P_{Q_{11}}(n-2)=n^2-2n-23>0$ for $n \geq 7$. Thus, $\lambda_1({Q_{1}})<n-2$. Note that $\lambda_1(A(\Gamma_{n,3}))=n-2$ by Lemma \ref{l1}. So, $\lambda_1(A(\Gamma_{n,3}))>\lambda_1(A(U))$.

Secondly, we define $A(Z_1)$ and its corresponding quotient matrix $Q_2$ based on the vertex partition $V_1=\{v_1\}$, $V_2=\{v_2\}$, $V_3=\{v_3,v_4\}$, $V_4=\{v_5\}$, $V_5=\{v_6\}$ and $V_6=\{v_7,...,v_n\}$ as follows
 \begin{center}
 $\begin{gathered} A(Z_1)= \begin{bmatrix} 0 & -1 & j_2^T & 1 & 0 & \bf{0}^T \\ -1 & 0 & j_2^T & 0 & 1 & \bf{0}^T \\ j_2 & j_2 & \left(J-I\right)_2 & j_2 & j_2 & J_{2\times(n-6)} \\ 1 & 0 & j_2^T & 0 & 0 & j_{n-6}^T \\ 0 & 1 & j_2^T & 0 & 0 & j_{n-6}^T \\ \bf0 & \bf0 & J_{2\times(n-6)}^T & j_{n-6} & j_{n-6} & (J-I)_{n-6} \end{bmatrix} \end{gathered}$,
 \end{center}
 and 
 \begin{center}
 $Q_2= \begin{bmatrix} 0 & -1 & 2 & 1 & 0 & 0 \\ -1 & 0 & 2 & 0 & 1 & 0 \\ 1 & 1 & 1 & 1 & 1 & n-6 \\ 1 & 0 & 2 & 0 & 0 & n-6 \\ 0 & 1 & 2 & 0 & 0 & n-6 \\ 0 & 0 & 2 & 1 & 1 & n-7 \end{bmatrix}$.
 \end{center}
 Note that the characteristic polynomial of $Q_2$ is
 \begin{center}
 $P_{Q_2}(\lambda)=(\lambda^2-\lambda-1)(\lambda^4+(7-n)\lambda^3+(14-4n)\lambda^2-21\lambda+7n-53)$.
\end{center}
 Adding $\alpha $J to the blocks of $A(Z_1)$, where $\alpha $ is constant, then 
 \begin{center}
 	$A_2= \begin{bmatrix} 0 & 0 & \bf{0}^T & 0 & 0 & \bf{0}^T \\ 0 & 0 & \bf{0}^T & 0 & 0 & \bf{0}^T \\ \bf0 & \bf0 & -I_2 & \bf0 & \bf0 & \bf0 \\ 0 & 0 & \bf{0}^T & 0 & 0 & \bf{0}^T \\ 0 & 0 & \bf{0}^T & 0 & 0 & \bf{0}^T \\ \bf0 & \bf0 & \bf{0}^T & \bf0 & \bf0 & -I_{n-6} \end{bmatrix}$.
 \end{center}
Since $\lambda_1(Q_2)>0$ and Spec$(A_2)=\{-1^{[n-4]},0^{[4]}\}$,  $\lambda_1(A(Z_1))=\lambda_1(Q_2)$. Let  $P_{Q_{22}}(\lambda)=\lambda^4+(7-n)\lambda^3+(14-4n)\lambda^2-21\lambda+7n-53$. Then $P_{Q_{22}}^{\prime}(\lambda)=4\lambda^3+(21-3n)\lambda^2+(28-8n)\lambda-21$, and $P_{Q_{22}}^{\prime\prime}(\lambda)=12\lambda^2+(42-6n)\lambda+28-8n$. Note that the maximal solution of $P_{Q_{22}}^{\prime\prime}(\lambda)=0$ is $\frac{3n-21+\sqrt{9n^2-30n+105}}{12}<n-2$, $P_{Q_{22}}^{\prime}(n-2)=n^3+n^2-4n-25>0$,  and $P_{Q_{22}}(n-2)=n^3-26n-9>0$ for $n\geq 7$. This indicates that $\lambda_1({Q_{2}})<n-2$. Clearly, $\lambda_1(A(\Gamma_{n,3}))=n-2$ by Lemma \ref{l1}. Thus, $\lambda_1(A(\Gamma_{n,3}))>\lambda_1(A(Z_1))$.

Next, we define $A(Z_2)$ and its corresponding quotient matrix $Q_3$ according to the vertex partition $V_1=\{v_1\}$, $V_2=\{v_2\}$, $V_3=\{v_3\}$, $V_4=\{v_4\}$, $V_5=\{v_5\}$, $V_6=\{v_6\}$ and $V_7=\{v_7,...,v_n\}$ as follows
\begin{center}
$\begin{gathered} A(Z_2)= \begin{bmatrix} 0 & -1 & 1 & 1 & 1 & 0 & \bf{0}^T \\ -1 & 0 & 1 & 1 & 0 & 1 & \bf{0}^T \\ 1 & 1 & 0 & 1 & 1 & 1 & j_{n-6}^T \\ 1 & 1 & 1 & 0 & 0 & 0 & j_{n-6}^T \\ 1 & 0 & 1 & 0 & 0 & 1 & j_{n-6}^T \\ 0 & 1 & 1 & 0 & 1 & 0 & j_{n-6}^T \\ \bf0 & \bf0 & j_{n-6} & j_{n-6} & j_{n-6} & j_{n-6} & (J-I)_{n-6} \end{bmatrix}, \end{gathered}$
\end{center}
 and 
 \begin{center}
 	$Q_3= \begin{bmatrix} 0 & -1 & 1 & 1 & 1 & 0 & 0 \\ -1 & 0 & 1 & 1 & 0 & 1 & 0 \\ 1 & 1 & 0 & 1 & 1 & 1 & n-6 \\ 1 & 1 & 1 & 0 & 0 & 0 & n-6 \\ 1 & 0 & 1 & 0 & 0 & 1 & n-6 \\ 0 & 1 & 1 & 0 & 1 & 0 & n-6 \\ 0 & 0 & 1 & 1 & 1 & 1 & n-7 \end{bmatrix}$.
 \end{center}
 Note that the characteristic polynomial of $Q_3$ is
 \begin{center}
 	$P_{Q_3}(\lambda)=(\lambda^2-2)(\lambda^5+(7-n)\lambda^4+(15-4n)\lambda^3+(n-21)\lambda^2+(6n-36)\lambda+18-2n)$.
 \end{center}
  Adding $\alpha $J to the blocks of $A(Z_2)$, where $\alpha $ is constant, then 
  \begin{center}
  	$A_3= \begin{bmatrix} 0 & 0 & 0 & 0 & 0 & 0 & \bf{0}^T \\ 0 & 0 & 0 & 0 & 0 & 0 & \bf{0}^T \\ 0 & 0 & 0 & 0 & 0 & 0 & \bf{0}^T \\ 0 & 0 & 0 & 0 & 0 & 0 & \bf{0}^T \\ 0 & 0 & 0 & 0 & 0 & 0 & \bf{0}^T \\ 0 & 0 & 0 & 0 & 0 & 0 & \bf{0}^T \\ \bf0 & \bf0 & \bf0 & \bf0 & \bf0 & \bf0 & -I_{n-6} \end{bmatrix}$.
  \end{center}
 Since $\lambda_1(Q_3)>0$ and Spec$(A_3)=\{-1^{[n-6]},0^{[6]}\}$,  $\lambda_1(A(Z_2))=\lambda_1(Q_3)$. Let $P_{Q_{33}}(\lambda)=\lambda^5+(7-n)\lambda^4+(15-4n)\lambda^3+(n-21)\lambda^2+(6n-36)\lambda+18-2n$. Then $P_{Q_{33}}^{\prime}(\lambda)=5\lambda^4+(28-4n)\lambda^3+(45-12n)\lambda^2+(2n-42)\lambda+6n-36$, $P_{Q_{33}}^{\prime\prime}(\lambda)=20\lambda^3+(84-12n)\lambda^2+(90-24n)\lambda+2n-42$, and $P_{Q_{33}}^{\prime\prime\prime}(\lambda)=60\lambda^2+(168-24n)\lambda+90-24n$. Note that the maximal solution of $P_{Q_{33}}^{\prime\prime\prime}(\lambda)=0$ is $\frac{2n-14+\sqrt{4n^2-16n+46}}{10} <n-2$, $P_{Q_{33}}^{\prime\prime}(n-2)=8n^3-12n^2-4n-46>0$, $P_{Q_{33}}^{\prime}\left(n-2\right)=n^4-n^2-60n+84>0$, and $P_{Q_{33}}\left(n-2\right)=n^4-37n^2+90n-34>0$ for $n\geq 7$. This implies that $\lambda_1({Q_{3}})<n-2$. Obviously, $\lambda_1(A(\Gamma_{n,3}))=n-2$ by Lemma \ref{l1}. Thus, $\lambda_1(A(\Gamma_{n,3}))>\lambda_1(A(Z_2))$.

 Finally, we give $A(W)$ and its corresponding quotient matrix $Q_4$ by the vertex partition $V_1=\{v_1\}$, $V_2=\{v_2\}$, $V_3=\{v_3\}$, $V_4=\{v_4\}$, $V_5=\{v_5\}$, and $V_6=\{v_6,...,v_n\}$ as follows
 \begin{center}
$A(W)= \begin{bmatrix} 0 & -1 & 1 & 1 & 1 & j_{n-5}^T \\ -1 & 0 & 1 & 1 & 0 & \bf{0}^T \\ 1 & 1 & 0 & 1 & 1 & j_{n-5}^T \\ 1 & 1 & 1 & 0 & 1 & \bf{0}^T \\ 1 & 0 & 1 & 1 & 0 & j_{n-5}^T \\ j_{n-5} & \bf0 & j_{n-5} & \bf0 & j_{n-5} & (J-I)_{n-5} \end{bmatrix}$ and $Q_4= \begin{bmatrix} 0 & -1 & 1 & 1 & 1 & n-5 \\ -1 & 0 & 1 & 1 & 0 & 0 \\ 1 & 1 & 0 & 1 & 1 & n-5 \\ 1 & 1 & 1 & 0 & 1 & 0 \\ 1 & 0 & 1 & 1 & 0 & n-5 \\ 1 & 0 & 1 & 0 & 1 & n-6 \end{bmatrix}$.
 \end{center}
 Note that the characteristic polynomial of $Q_4$ is
 \begin{center}
 $P_{Q_4}(\lambda)=(\lambda+1)(\lambda^5+(5-n)\lambda^4+(1-2n)\lambda^3+(5n-31)\lambda^2+(7n-25)\lambda+33-5n)$.
 \end{center}
 Adding $\alpha $J to the blocks of $A(W)$, where $\alpha $ is constant, then 
 \begin{center}
 $A_4= \begin{bmatrix} 0 & 0 & 0 & 0 & 0 & \bf{0}^T \\ 0 & 0 & 0 & 0 & 0 & \bf{0}^T \\ 0 & 0 & 0 & 0 & 0 & \bf{0}^T \\ 0 & 0 & 0 & 0 & 0 & \bf{0}^T \\ 0 & 0 & 0 & 0 & 0 & \bf{0}^T \\ \bf0 & \bf0 & \bf0 & \bf0 & \bf0 & -I_{n-5} \end{bmatrix}$.
 \end{center}
  Since $\lambda_1(Q_4)>0$ and Spec$(A_4)=\{-1^{[n-5]},0^{[5]}\}$,  $\lambda_1(A(W))=\lambda_1(Q_4)$. Let $P_{Q_{44}}(\lambda)=\lambda^5+(5-n)\lambda^4+(1-2n)\lambda^3+(5n-31)\lambda^2+(7n-25)\lambda+33-5n$. Then $P_{Q_{44}}^{\prime}(\lambda)=5\lambda^4+(20-4n)\lambda^3+(3-6n)\lambda^2+(10n-62)\lambda+7n-25$, $P_{Q_{44}}^{\prime\prime}(\lambda)=20\lambda^3+(60-12n)\lambda^2+(6-12n)\lambda+10n-62$, and $P_{Q_{44}}^{\prime\prime\prime}(\lambda)=60\lambda^2+(120-24n)\lambda+6-12n$. Note that the maximal solution of $P_{Q_{44}}^{\prime\prime\prime}(\lambda)=0$ is $\frac{2n-10+\sqrt{4n^2-20n+90}}{10}<n-2$, $P_{Q_{44}}^{\prime\prime}(n-2)=8n^3-24n^2-8n+6>0$, $P_{Q_{44}}^{\prime}(n-2)=n^4-2n^3-11n^2+n+31>0$, and $P_{Q_{44}}(n-2)=n^4-6n^3-2n^2+32n-1>0$ for $n\geq 7$. This means that $\lambda_1({Q_{4}})<n-2$. Clearly, $\lambda_1(A(\Gamma_{n,3}))=n-2$ by Lemma \ref{l1}. Therefore, $\lambda_1(A(\Gamma_{n,3}))>\lambda_1(A(W))$. The proof is completed.
\end{proof}

\section{Proof of Theorem \ref{thm1}}
Let $\Gamma$ be a signed graph. The degree of a vertex $v_i$ in $\Gamma$ is denoted by $d_\Gamma(v_i)$ which is the number of edges incident with $v_i$.  We denote the set of all neighbors of $u$ in $\Gamma$ by $N_\Gamma(u)$ and $N_\Gamma[u]=N_\Gamma(u)\cup\{u\}$. Let $\rho(\Gamma)=\max\{|\lambda_i(\Gamma)|{:}1\leq i\leq n\}$ be the spectral radius of $\Gamma$. For $\phi\neq U\subset V(\Gamma)$, let $\Gamma[U]$ be the signed subgraph of $\Gamma$ induced by $U$. Let $\Gamma+uv$ (or $\Gamma-uv$) denote the signed graph obtained from  $\Gamma$ by adding (or deleting) the positive edge $uv$, where $u,v\in V(\Gamma)$. If all edges of $K_n$ are positive, then  we denote the graph by $(K_n,+)$. Let $K_n\circ K_1$ be a graph  obtained by taking one copy of $K_n$ and $n$ copies of $K_1$ and then forming a positive edge from $i^{th}$ vertex of $K_n$ to the vertex of the $i^{th}$ copy of $K_1$ for all $i$.

\begin{lemma}\label{l2}\cite{S1}
	Let $\Gamma$ be a signed graph. Then there exists a signed graph $\Gamma^{\prime}$ switching equivalent to $\Gamma$ such that $A(\Gamma^\prime)$ has a non-negative eigenvector corresponding to $\lambda_1(A(\Gamma^{\prime}))$.
\end{lemma}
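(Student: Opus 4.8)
The plan is to exploit the fact that a switching operation acts on the adjacency matrix by conjugation with a diagonal $\pm1$ matrix, and therefore preserves the spectrum while transforming eigenvectors in a controlled way. Concretely, for $U\subseteq V(G)$ let $D=\operatorname{diag}(s_1,\dots,s_n)$ be the diagonal matrix with $s_i=-1$ for $v_i\in U$ and $s_i=+1$ otherwise. For an edge $v_iv_j$ the switched sign is $s_is_j\sigma(v_iv_j)$, which is exactly the $(i,j)$ entry of $DA(\Gamma)D$; hence the signed graph $\Gamma'$ obtained by switching at $U$ satisfies $A(\Gamma')=D A(\Gamma) D$. Since $D^2=I$, the matrix $D$ is its own inverse, so $A(\Gamma')$ and $A(\Gamma)$ are similar and in particular share the same eigenvalues; thus $\lambda_1(A(\Gamma'))=\lambda_1(A(\Gamma))$ for every choice of $U$.

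First I would fix a unit eigenvector $\mathbf{x}=(x_1,\dots,x_n)^T$ of $A(\Gamma)$ associated with $\lambda_1(A(\Gamma))$. Then I would choose the switching set according to the signs of the entries of $\mathbf{x}$: put $s_i=\operatorname{sgn}(x_i)$ when $x_i\neq 0$ and $s_i=+1$ when $x_i=0$, and let $D=\operatorname{diag}(s_1,\dots,s_n)$, with $\Gamma'$ the corresponding switching. Setting $\mathbf{y}=D\mathbf{x}$, the $i$-th entry of $\mathbf{y}$ equals $s_i x_i=|x_i|\ge 0$, so $\mathbf{y}$ is a non-negative vector (namely the entrywise absolute value of $\mathbf{x}$).

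It remains to verify that $\mathbf{y}$ is an eigenvector of $A(\Gamma')$ for its largest eigenvalue. Using $A(\Gamma')=DA(\Gamma)D$ together with $D^2=I$, I would compute
\[
A(\Gamma')\mathbf{y}=D A(\Gamma) D\,D\mathbf{x}=D A(\Gamma)\mathbf{x}=\lambda_1(A(\Gamma))\,D\mathbf{x}=\lambda_1(A(\Gamma'))\,\mathbf{y},
\]
where the final equality invokes $\lambda_1(A(\Gamma'))=\lambda_1(A(\Gamma))$ established in the first step. Hence $\mathbf{y}\ge 0$ is an eigenvector of $A(\Gamma')$ corresponding to its index, which is exactly the assertion of the lemma.

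There is essentially no serious obstacle here: the result is a direct consequence of the identity $A(\Gamma')=DA(\Gamma)D$ and the involution property $D^2=I$. The only minor points that need to be stated carefully are the convention for $\operatorname{sgn}$ at coordinates where $x_i=0$ (any fixed sign is admissible, since then $y_i=0$ is already non-negative), and the observation that conjugation by $D$ leaves the entire spectrum—and hence the status of $\lambda_1$ as the maximal eigenvalue—unchanged, so that $\mathbf{y}$ is genuinely attached to the index rather than to some interior eigenvalue.
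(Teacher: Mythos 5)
Your proof is correct and is the standard argument: the paper itself cites this lemma from \cite{S1} without reproducing a proof, and the proof in that reference proceeds exactly as you do, writing the switched adjacency matrix as $A(\Gamma')=DA(\Gamma)D$ for a diagonal $\pm 1$ involution $D$ chosen from the sign pattern of a $\lambda_1$-eigenvector, so that $D\mathbf{x}=|\mathbf{x}|\ge 0$ is an eigenvector for the (unchanged) index. Your two careful side remarks --- the convention at zero entries and the preservation of the whole spectrum under conjugation --- are exactly the right points to pin down, and nothing is missing.
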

\begin{lemma}\label{x1}\cite{15} 
Let $\Gamma=(G,\sigma)$ be a connected unbalanced signed graph of order $n$. If $\Gamma$ is $\mathcal{C}^-_3$-free, then $\rho(\Gamma)\leq \frac{1}{2}(\sqrt{n^2-8}+n-4)$.
\end{lemma}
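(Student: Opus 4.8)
The plan is to bound the index $\lambda_1:=\lambda_1(A(\Gamma))$ and then recover the spectral radius from it. Since a switching is a diagonal $\pm1$ congruence it preserves the whole spectrum, so by Lemma~\ref{l2} I may assume that $A:=A(\Gamma)$ carries a nonnegative unit eigenvector $\mathbf{x}=(x_1,\dots,x_n)^T\ge 0$ with $A\mathbf{x}=\lambda_1\mathbf{x}$, normalized so that $x_u=\max_i x_i=1$ for some vertex $u$. I will prove $\lambda_1\le \tfrac12(\sqrt{n^2-8}+n-4)$; a separate and easier estimate controls $|\lambda_n(A)|$ well below this target, so that $\rho(\Gamma)=\lambda_1$ and the stated bound follows. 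The target value is the larger root of the quadratic $\lambda^2-(n-4)\lambda-2(n-3)=0$, so the goal is precisely to establish the inequality $\lambda_1^2\le (n-4)\lambda_1+2(n-3)$.

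The combinatorial engine is the $\mathcal{C}_3^-$-free hypothesis, i.e.\ every triangle of $\Gamma$ is positive. For an edge $uv$ and a common neighbor $w$, positivity of the triangle $uvw$ gives $\sigma(uw)\sigma(wv)=\sigma(uv)$, hence for adjacent $u,v$
\[
(A^2)_{uv}=\sum_{w\sim u,\ w\sim v}\sigma(uw)\sigma(wv)=\sigma(uv)\,c(u,v),
\]
where $c(u,v)$ is the number of common neighbors, while for $u\not\sim v$ only $|(A^2)_{uv}|\le c(u,v)$ survives. I also record the classical fact that a signed \emph{complete} graph with no negative triangle is balanced; since $\Gamma$ is unbalanced its underlying graph is \emph{not} complete, and its shortest negative cycle has length at least $4$. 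Applying $A$ twice at $u$ then yields
\[
\lambda_1^2=\lambda_1^2 x_u=(A^2\mathbf{x})_u=d(u)+\sum_{v\ne u}(A^2)_{uv}x_v .
\]

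The crux is to bound this right-hand side. Using $0\le x_v\le 1$, the sign identity above, and the observation that a negative incident edge or a sign cancellation in $(A^2)_{uv}$ only decreases the sum, the task reduces to extracting a definite deficit from unbalancedness: the heavy part of $\mathbf{x}$ around $u$ must miss enough adjacencies, or carry enough negative edges lying in no triangle, that the surviving positive contribution is at most $(n-4)\lambda_1+2(n-3)-d(u)$. Concretely I expect to isolate the obstruction guaranteed by the length-$\ge 4$ negative cycle — a non-adjacent pair $\{a_1,a_2\}$ together with a vertex $b$ joined to both but whose negative edge to one of them sits in no triangle — and to charge the forced missing adjacencies against the second-neighborhood sum, producing $\lambda_1^2\le(n-4)\lambda_1+2(n-3)$. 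Solving this quadratic gives the bound. Tightness is witnessed by the signed graph obtained from $K_{n-1}$ on $\{a_1,a_2\}\cup S$ ($|S|=n-3$) by deleting the edge $a_1a_2$ and attaching a new vertex $b$ to $a_1$ (negatively) and to $a_2$ (positively): in its Perron sector the entry at $b$ vanishes and the reduced matrix is $\left(\begin{smallmatrix}0 & n-3\\ 2 & n-4\end{smallmatrix}\right)$, whose larger eigenvalue is exactly $\tfrac12(\sqrt{n^2-8}+n-4)$. This is where the coefficients $n-4$ and $2(n-3)$ originate.

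The main obstacle is this middle step: converting the qualitative statement ``unbalanced and triangle-balanced'' into the exact numerical deficit, uniformly over all admissible $\Gamma$. The delicate issue is controlling the signed entries $(A^2)_{uv}$ for $v\not\sim u$, where positive and negative two-paths may reinforce rather than cancel; one must use the triangle-balanced structure to show these cannot overcome the loss created by the missing edges forced by the short negative cycle. A secondary technical point is the reduction $\rho(\Gamma)=\lambda_1$: this is intuitively clear because the extremal spectrum consists of a single large positive eigenvalue over an essentially $(-1)$-heavy tail, but it still requires a clean uniform justification bounding $|\lambda_n(A)|$.
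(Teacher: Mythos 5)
You should first be aware that the paper contains no proof of Lemma~\ref{x1} to compare against: it is imported wholesale from \cite{15}. Your attempt must therefore stand on its own as a complete proof, and it does not — it is a plan whose decisive step you yourself defer. To give credit where due, the groundwork is sound: $\tfrac{1}{2}(\sqrt{n^2-8}+n-4)$ is indeed the larger root of $\lambda^2-(n-4)\lambda-2(n-3)=0$; the identity $(A^2)_{uv}=\sigma(uv)\,c(u,v)$ for adjacent $u,v$ is a correct consequence of every triangle being positive; and your tightness example checks out (with $x_{a_1}=x_{a_2}$, $x_b=0$, the quotient is exactly $\bigl(\begin{smallmatrix}0 & n-3\\ 2 & n-4\end{smallmatrix}\bigr)$). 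But the lemma lives entirely in the step you label ``the main obstacle'': deriving $\lambda_1^2\le(n-4)\lambda_1+2(n-3)$ for \emph{every} $\mathcal{C}_3^-$-free unbalanced $\Gamma$, and here the proposal offers only the expectation that one can ``charge the forced missing adjacencies against the second-neighborhood sum.'' This local strategy at the maximal-entry vertex $u$ cannot work as stated: unbalancedness guarantees a negative cycle of length at least $4$ \emph{somewhere} in $\Gamma$, possibly far from $u$, and in a neighborhood of $u$ the graph may be indistinguishable from a balanced complete graph, so the expansion $\lambda_1^2=d(u)+\sum_{v\neq u}(A^2)_{uv}x_v$ exhibits no visible deficit in the terms you control. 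Moreover, your hoped-for obstruction — a negative edge lying in no triangle — is not forced by the hypotheses: a negative edge $uv$ may lie in many triangles provided the two other edges of each such triangle carry opposite signs. Converting ``unbalanced and all triangles positive'' into the exact numerical deficit $2(n-3)$ requires a genuinely global structural input (a canonical switching form for the negative edge set, or an edge-count bound for this class, which is in fact the route of \cite{15}); nothing in the proposal supplies it.

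There is a second, independent gap: the lemma bounds $\rho(\Gamma)=\max\{\lambda_1(A),-\lambda_n(A)\}$, and you assert that ``a separate and easier estimate controls $|\lambda_n(A)|$'' without giving one. Note that $-\lambda_n(\Gamma)=\lambda_1(\Gamma^-)$, where $\Gamma^-$ is $\Gamma$ with all edge signs reversed; since negation multiplies each triangle's sign by $(-1)^3$, all triangles of $\Gamma^-$ are \emph{negative}, a class for which your triangle identity yields nothing. The only bound your stated tools produce is entrywise domination, $\rho(\Gamma)\le\lambda_1(G)\le n-1$, which exceeds the target $\tfrac{1}{2}(\sqrt{n^2-8}+n-4)\approx n-2-\tfrac{2}{n}$. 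So both halves of the claimed inequality remain open in your write-up; as it stands, the proposal is a correct identification of the answer (quadratic, extremal graph) without a proof of either the upper bound for $\lambda_1$ or the reduction from $\rho$ to $\lambda_1$.
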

\begin{lemma}\label{l3}\cite{7}
If signed graph $\Gamma=(G,\sigma)$ with $n$ vertices $(n \geq 7)$ is unbalanced and does not contain unbalanced ${K}_4$ as a signed subgraph, then $\rho(\Gamma)\leq n-2$, with equation holds only when $\Gamma$ is switching isomorphic to $\Gamma_{n,3}$.
\end{lemma}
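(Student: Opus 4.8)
The plan is to prove the bound $\rho(\Gamma)\le n-2$ and identify $\Gamma_{n,3}$ as the unique extremal graph, splitting the argument according to whether $\Gamma$ contains a negative triangle. First I would reduce to $\Gamma$ connected: for a disconnected $\Gamma$ one has $\rho(\Gamma)=\max_i\rho(\Gamma_i)$ over its components, each of which is $\mathcal{K}_4^-$-free, and since $\Gamma$ is unbalanced at least one component is unbalanced and therefore uses at least three vertices; because a balanced component $\Gamma_i$ satisfies $\rho(\Gamma_i)=\lambda_1(G_i)\le|\Gamma_i|-1$, the maximum-$\rho$ component must be unbalanced and must be all of $\Gamma$ whenever $\rho=n-2$. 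I would also record at the outset that $\Gamma_{n,3}$ lies in the class — it is unbalanced through its unique negative triangle $uv_1v_2$, which lies in no $K_4$ since $\deg u=2$ — and that $\lambda_1(A(\Gamma_{n,3}))=n-2$ by Lemma \ref{l1}(ii), so the bound is attained. Throughout I use Lemma \ref{l2} to switch $\Gamma$ so that $\lambda_1$ carries a nonnegative eigenvector $\mathbf{x}$, normalised to $\max_i x_i=1$.

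The first case is that $\Gamma$ is $\mathcal{C}_3^-$-free, i.e.\ has no negative triangle. Here Lemma \ref{x1} applies verbatim to the connected unbalanced graph $\Gamma$ and gives $\rho(\Gamma)\le\frac12(\sqrt{n^2-8}+n-4)$. Since $\sqrt{n^2-8}<n$, this quantity is strictly below $\frac12(2n-4)=n-2$, so $\rho(\Gamma)<n-2$ and equality is impossible. Thus the bound holds comfortably when there is no negative triangle, and any extremal graph must contain one.

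The main case is that $\Gamma$ contains a negative triangle $T=\{a,b,c\}$. The decisive observation is that \emph{no negative triangle has a common neighbour}: a vertex adjacent to all of $a,b,c$ would span, together with $T$, a $K_4$ containing the negative triangle and hence an unbalanced $K_4$, which is forbidden. Consequently every vertex outside $T$ misses at least one of $a,b,c$, forcing at least $n-3$ non-edges between $T$ and the rest and capping the number of edges of $\Gamma$ at precisely the count of $\Gamma_{n,3}$. To promote this to a spectral statement I would take $\Gamma$ to be index-maximal in the class and argue by local modification: since $\lambda_1=\mathbf{x}^{T}A(\Gamma)\mathbf{x}$, replacing a missing edge by a positive edge between two vertices with $x_i,x_j>0$ strictly increases $\lambda_1$, and this is admissible unless it completes an unbalanced $K_4$; hence in an extremal $\Gamma$ the positive edges are as dense as the no-common-neighbour obstruction allows. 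Reading the eigen-equations at $a,b,c$ and at the maximum-entry vertex then pins the structure down: the only way to carry a negative triangle with no common neighbour inside an otherwise saturated graph is to attach a single degree-two vertex (carrying the unique negative edge) to a positive $K_{n-1}$, i.e.\ $\Gamma$ is switching-isomorphic to $\Gamma_{n,3}$, whose index is exactly $n-2$ by Lemma \ref{l1}(ii).

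The genuinely hard part is this structural classification: converting the single clean condition, together with index-maximality, into $\Gamma_{n,3}$, and in particular excluding the competing edge-maximal configurations in which the negative triangle sits more symmetrically — several vertices each adjacent to two of $a,b,c$, or two large cliques glued along part of $T$ — some of which match the edge count of $\Gamma_{n,3}$ yet must be shown to have strictly smaller index, via the same sort of quotient-matrix comparison used in Lemma \ref{ll3}. The second technical point is bounding the most-negative eigenvalue: because negating all signs does not preserve $\mathcal{K}_4^-$-freeness, the inequality $|\lambda_n|\le n-2$ cannot be read off from the $\lambda_1$ bound and needs its own estimate, ruling out the nearly antibalanced cliques that would drive $\lambda_n$ below $-(n-2)$ — these, however, already contain an unbalanced $K_4$, which is the lever I would use to close that gap.
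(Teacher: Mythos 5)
The paper never proves this lemma itself: it is imported wholesale from Chen and Yuan \cite{7}, so your attempt is being measured against a full external argument rather than a short internal one. Your outline is correct exactly where the lemma is easy: the reduction to connected $\Gamma$, the verification via Lemma \ref{l1} that $\Gamma_{n,3}$ is in the class and attains $n-2$, the disposal of the $\mathcal{C}_3^-$-free case by Lemma \ref{x1} (indeed $\tfrac12(\sqrt{n^2-8}+n-4)<n-2$ since $\sqrt{n^2-8}<n$), and the key forbidden-configuration fact that a negative triangle $T=\{a,b,c\}$ admits no common neighbour. But from that point on you substitute a description of a proof for the proof. The edge count $e(\Gamma)\le\binom{n-1}{2}+2$ that the no-common-neighbour condition yields cannot close the spectral gap: bounds of the form $\lambda_1\le\sqrt{2m-n+1}$ sit strictly above $n-2$ at this edge count, so everything rests on the structural classification of index-maximal configurations --- and that is precisely the step you label ``the genuinely hard part'' and then concede rather than execute. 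You acknowledge that competing edge-maximal configurations (several outside vertices each adjacent to two of $a,b,c$; two cliques glued along part of $T$) match the edge count of $\Gamma_{n,3}$ and ``must be shown'' to have strictly smaller index, but you neither enumerate the configurations to be excluded nor carry out any of the quotient-matrix or eigenvector-equation comparisons. The assertion that ``the only way to carry a negative triangle with no common neighbour inside an otherwise saturated graph'' is $\Gamma_{n,3}$ is exactly the lemma's content restated, not derived; by way of scale, the present paper's proof of its own Theorem \ref{thm1} --- the same kind of classification one forbidden-configuration level up --- occupies six subcases of eigenvector estimates built on Lemma \ref{l5}, and nothing shorter is on offer here.

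The second gap is the negative end of the spectrum. The lemma bounds $\rho(\Gamma)=\max_i|\lambda_i(\Gamma)|$, so you also need $-\lambda_n(\Gamma)\le n-2$, and you correctly observe that sign-negation does not preserve $\mathcal{K}_4^-$-freeness (an all-positive $K_4$ negates to one with negative triangles), so the $\lambda_1$ bound cannot simply be transferred. But your proposed lever --- that signed graphs pushing $\lambda_n$ below $-(n-2)$ are ``nearly antibalanced cliques'' which ``already contain an unbalanced $K_4$'' --- is itself an unproved structural claim, and the crude estimate $\rho(\Gamma)\le\lambda_1(G)$ is useless here: the underlying graph of $\Gamma_{n,3}$ contains $K_{n-1}$ properly, so $\lambda_1(G)>n-2$ already for the extremal graph itself, and underlying graphs as dense as $K_n$ minus a few edges (all with $\lambda_1(G)>n-2$) are compatible with $\mathcal{K}_4^-$-freeness. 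So the bound on $-\lambda_n$ requires a genuine signed-structure argument that your proposal only gestures at. In sum: the skeleton matches the expected strategy, but both load-bearing components --- the extremal classification and the $\lambda_n$ estimate --- are missing, so this is a plan for a proof, not a proof.
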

\begin{lemma}\label{l5}\cite{B1}
Let $X=(x_1,x_2,...,x_n)^T$ be an eigenvector associated with the index of a signed graph $\Gamma$ and let $v_r$, $v_s$ be fixed vertices of $\Gamma$.

(i) If $x_rx_s\geq 0$, at least one of $x_r,x_s$ is nonzero, and $v_r$ and $v_s$ are not adjacent (resp. $v_rv_s$ is a negative edge), then for a signed graph $\Gamma^\prime$ obtained by adding a positive edge $v_rv_s$ (resp. removing $v_rv_s$ or reversing its sign), we have $\lambda_1(A(\Gamma^\prime))>\lambda_1(A(\Gamma))$.

(ii) If $x_r\geq x_s$, $w\in N_{\Gamma}(v_s)\backslash N_{\Gamma}(v_r)$, and $x_w>0$, then for a signed graph $\Gamma^\prime$ obtained by moving positive edge $v_sw$ from $v_s$ to $v_r$, we have $\lambda_1(A(\Gamma^\prime))>\lambda_1(A(\Gamma))$.
\end{lemma}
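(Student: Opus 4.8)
The plan is to exploit the variational (Rayleigh) characterization of the index. For any signed graph $\Lambda$ of order $n$ and any nonzero $Y\in\mathbb{R}^n$ one has $\lambda_1(A(\Lambda))\geq \frac{Y^TA(\Lambda)Y}{Y^TY}$, with equality if and only if $Y$ is an eigenvector of $A(\Lambda)$ for $\lambda_1(A(\Lambda))$ (the maximizer characterization of the largest eigenvalue of a real symmetric matrix). Writing the quadratic form edge by edge, $Y^TA(\Lambda)Y=2\sum_{e=v_iv_j\in E(\Lambda)}\sigma(e)\,y_iy_j$, the effect of a local modification becomes transparent. I would fix the given eigenvector $X$ of $A(\Gamma)$ for $\lambda_1:=\lambda_1(A(\Gamma))$, so that $X^TA(\Gamma)X=\lambda_1 X^TX$, and bound $\lambda_1(A(\Gamma'))$ from below by $\frac{X^TA(\Gamma')X}{X^TX}$. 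The whole argument then reduces to computing the change $X^TA(\Gamma')X-X^TA(\Gamma)X$ and analysing when it is strictly positive.

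For part (i), adding a positive edge $v_rv_s$ with $v_r\not\sim v_s$ inserts the single term $2x_rx_s$, so $X^TA(\Gamma')X-X^TA(\Gamma)X=2x_rx_s\geq 0$ by hypothesis. The two remaining variants are handled the same way: deleting a negative edge $v_rv_s$ removes its contribution $-2x_rx_s$ and hence changes the form by $+2x_rx_s$, while reversing the sign of $v_rv_s$ from $-$ to $+$ changes it by $+4x_rx_s$; both are nonnegative. For part (ii), moving the positive edge $v_sw$ to $v_rw$ deletes the term $2x_sx_w$ and inserts $2x_rx_w$, giving $X^TA(\Gamma')X-X^TA(\Gamma)X=2x_w(x_r-x_s)\geq 0$ because $x_w>0$ and $x_r\geq x_s$. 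In every case $\lambda_1(A(\Gamma'))\geq\lambda_1$ is immediate, and whenever the displayed difference is strictly positive the strict inequality $\lambda_1(A(\Gamma'))>\lambda_1$ follows at once from the Rayleigh bound.

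The main obstacle is the boundary case in which the quadratic form is unchanged: $x_rx_s=0$ (with the surviving nonzero coordinate) in part (i), or $x_r=x_s$ in part (ii). Here the Rayleigh estimate only yields $\lambda_1(A(\Gamma'))\geq\lambda_1$, so I would argue by contradiction. If $\lambda_1(A(\Gamma'))=\lambda_1$, then $X$ attains the Rayleigh quotient of $A(\Gamma')$ and must therefore be an eigenvector of $A(\Gamma')$ for $\lambda_1$. I would then inspect a single perturbed coordinate of $A(\Gamma')X$ to reach a contradiction. For instance, in (i) with $x_s=0$, $x_r\neq 0$ and $v_rv_s$ newly added positively, $(A(\Gamma')X)_s=(A(\Gamma)X)_s+x_r=\lambda_1 x_s+x_r=x_r\neq 0=\lambda_1 x_s$; in (ii) with $x_r=x_s$, $(A(\Gamma')X)_r=\lambda_1 x_r+x_w\neq\lambda_1 x_r$ since $x_w>0$. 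Thus $X$ violates the eigenvalue equation at that coordinate, contradicting that it is a $\lambda_1$-eigenvector of $A(\Gamma')$, and hence $\lambda_1(A(\Gamma'))>\lambda_1$.

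In summary, the only care needed beyond the routine edge-by-edge bookkeeping is to verify, in each boundary subcase (deletion, sign reversal, and the $x_r=x_s$ tie in the edge-moving operation), the one coordinate of $A(\Gamma')X$ that breaks the eigenvalue equation, and to invoke the equality condition of the Rayleigh quotient for the largest eigenvalue. This disposes of all the non-strict cases uniformly and yields the strict inequality $\lambda_1(A(\Gamma'))>\lambda_1(A(\Gamma))$ asserted in both parts.
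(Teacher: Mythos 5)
Your proof is correct and is essentially the argument behind this lemma: the paper itself does not prove it but imports it from \cite{B1}, where the proof is exactly your Rayleigh-quotient comparison, computing the change of the quadratic form edge by edge ($2x_rx_s$ for addition or deletion, $4x_rx_s$ for sign reversal, $2x_w(x_r-x_s)$ for rotation) and settling the boundary cases $x_rx_s=0$ and $x_r=x_s$ by noting that equality would force $X$ to be a $\lambda_1$-eigenvector of $A(\Gamma')$ and then exhibiting one coordinate where the eigenvalue equation fails. No gaps; nothing further is needed.
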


\noindent{\bf{Proof of Theorem \ref{thm1}}.} Let $\Gamma=(G,\sigma)$ be a  $\mathcal{K}_{3,3}^{-}$-free unbalanced signed graph on $n\geq 7$ vertices with maximum index. According to Lemma \ref{l2}, $\Gamma$ is switching equivalent to a signed graph $\Gamma^{\prime}$ such that $A(\Gamma^\prime)$ has a non-negative eigenvector corresponding to $\lambda_1(A(\Gamma^\prime))=\lambda_1(A(\Gamma))$. Note that $\Gamma$ and $\Gamma^\prime$ share the same positive and negative cycles. So, $\Gamma^\prime$ is unbalanced and $\mathcal{K}_{3,3}^{-}$-free. Let $V(\Gamma)=\{v_1,v_2,...,v_n\}$ and $X=(x_1,x_2,...,x_n)^T$ be the non-negative unit eigenvector of $A(\Gamma^\prime)$ corresponding to $\lambda_1(A(\Gamma^\prime))$. Note that $\Gamma_{n,3}$ is unbalanced and $\mathcal{K}_{3,3}^{-}$-free. By Lemma \ref{l1}, $\lambda_1(A(\Gamma^\prime))\geq \lambda_1(A(\Gamma_{n,3}))=n-2$. Since $\frac{1}{2}(\sqrt{n^2-8}+n-4) <n-2$, $\Gamma^\prime$ must contain an unbalanced $C_3$ as a signed subgraph by Lemma \ref{x1}. Assume that $C_3$ is an unbalanced signed subgraph of $\Gamma^{\prime}$ and $V(C_3)=\{v_1,v_2,v_3\}$.
\begin{claim}\label{c1}
$X$ contains at most one zero entry.
\end{claim}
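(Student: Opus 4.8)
The plan is to argue by contradiction using the eigen-equation at a vertex of maximum weight, combined with the lower bound $\lambda_1(A(\Gamma'))\ge n-2$ already recorded above (which comes from comparing $\Gamma'$ with the competitor $\Gamma_{n,3}$ via Lemma \ref{l1}). The driving intuition is that two vanishing coordinates of $X$ would leave too few positive contributions in the eigen-equation to sustain an index as large as $n-2$, so the index would have to drop to $n-3$, a contradiction.

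Concretely, suppose to the contrary that $X$ has at least two zero entries, and write $\lambda:=\lambda_1(A(\Gamma'))$. First I would choose a vertex $v^{\ast}$ with $x_{v^{\ast}}=\max_i x_i=:x^{\ast}$; since $X$ is a nonzero non-negative unit vector we have $x^{\ast}>0$. The key bookkeeping step is the neighbour count: because at least two coordinates vanish, at most $n-2$ vertices carry positive weight, and since $v^{\ast}$ is one of these and is never its own neighbour, at most $n-3$ neighbours of $v^{\ast}$ can carry positive weight.

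Next I would read off the eigen-equation at $v^{\ast}$ and estimate it using $x_w\ge 0$ together with $\sigma(v^{\ast}w)\le 1$:
\[
\lambda x^{\ast}=\sum_{w\sim v^{\ast}}\sigma(v^{\ast}w)\,x_w\le\sum_{w\sim v^{\ast}}x_w\le (n-3)\,x^{\ast}.
\]
Dividing by $x^{\ast}>0$ yields $\lambda\le n-3$, which contradicts the established inequality $\lambda\ge n-2$. This contradiction shows that $X$ cannot have two or more zero entries, proving the claim.

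The argument is elementary, and I do not expect a genuine obstacle; the only points requiring care are (a) confirming that the maximum-weight vertex itself belongs to the set of positive-weight vertices and is excluded from its own neighbourhood, which is exactly what sharpens the bound from $n-2$ to the decisive $n-3$, and (b) invoking the previously derived inequality $\lambda_1(A(\Gamma'))\ge n-2$. The estimate is tight precisely at $\Gamma_{n,3}$, where the single zero coordinate sits at the degree-two vertex $u$, consistent with the claim permitting exactly one zero entry. Notably, this particular claim needs neither the $\mathcal{K}_{3,3}^{-}$-free hypothesis nor the switching and edge-moving tools (Lemmas \ref{l2} and \ref{l5}), although those will be essential in the subsequent structural analysis.
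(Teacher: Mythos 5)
Your proof is correct, but it follows a different route from the paper's. The paper argues globally via the Rayleigh quotient: assuming two zero entries, say at $v_n$ and $v_{n-1}$, it observes that
\[
\lambda_1(A(\Gamma^{\prime}))=X^TA(\Gamma^{\prime})X=(x_1,\ldots,x_{n-2})\,A(\Gamma^{\prime}-v_n-v_{n-1})\,(x_1,\ldots,x_{n-2})^T\leq\lambda_1(A(\Gamma^{\prime}-v_n-v_{n-1}))\leq\lambda_1(A(K_{n-2}))=n-3,
\]
contradicting $\lambda_1(A(\Gamma^{\prime}))\geq n-2$. You instead argue locally, via the eigen-equation at a maximum-weight vertex $v^{\ast}$, counting that at most $n-3$ of its neighbours can carry positive weight, which yields the same bound $\lambda_1(A(\Gamma^{\prime}))\leq n-3$ and the same contradiction. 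Both arguments are sound and hinge on the identical numerical fact (two vanishing coordinates force the index down to $n-3$), but the mechanisms differ: the paper's deletion argument needs the standard monotonicity fact that any signed graph on $n-2$ vertices has index at most $\lambda_1(A(K_{n-2}))=n-3$, while yours needs only non-negativity of $X$ and careful exclusion of $v^{\ast}$ from its own neighbourhood — the point you correctly flag as the crux that sharpens $n-2$ to $n-3$. Your counting technique is in fact the very one the paper itself uses later (in the proofs of Claims \ref{o3} and \ref{CC1}), so your proof is stylistically consistent with the rest of the argument; the paper's version is marginally more compact, and it generalizes immediately to any number of zero entries, but for this claim the two are interchangeable.
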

\begin{proof}
Otherwise, $X$ contains at least two zero entries. Assume that $x_n=x_{n-1}=0$, then 
\begin{align*}
\lambda_1(A(\Gamma^{\prime}))&=X^TA(\Gamma^{\prime})X=(x_1,\ldots,x_{n-2})A(\Gamma^{\prime}-v_n-v_{n-1})(x_1,\ldots,x_{n-2})^T\\&\leq\lambda_1(A(\Gamma^{\prime}-v_n-v_{n-1}))\leq\lambda_1(A(K_{n-2}))=n-3<\lambda_1(A(\Gamma^{\prime})),
\end{align*}
a contradiction. Thus, $X$ contains at most one zero entry.
\end{proof} 
\begin{claim}\label{c2}
The unbalanced $C_3$ contains all negative edges of $\Gamma^\prime$.
\end{claim}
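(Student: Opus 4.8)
The plan is to argue by contradiction, exploiting the maximality of $\Gamma$ together with Lemma \ref{l5}(i) and Claim \ref{c1}. Suppose some negative edge $v_rv_s$ of $\Gamma'$ does not belong to the unbalanced triangle on $\{v_1,v_2,v_3\}$, i.e. $v_rv_s \notin \{v_1v_2, v_1v_3, v_2v_3\}$. I would show that deleting this edge produces a strictly better admissible competitor, which contradicts the choice of $\Gamma$.

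First I would verify the two hypotheses needed to invoke Lemma \ref{l5}(i). Since $X$ is non-negative, $x_rx_s\geq 0$ holds for every pair of vertices; and since Claim \ref{c1} forces $X$ to have at most one zero coordinate, at least one of $x_r,x_s$ is nonzero. Applying Lemma \ref{l5}(i) to the negative edge $v_rv_s$ then gives that $\Gamma'':=\Gamma'-v_rv_s$ satisfies $\lambda_1(A(\Gamma''))>\lambda_1(A(\Gamma'))$. Next I would check that $\Gamma''$ remains admissible: because $v_rv_s$ is none of the three triangle edges, the unbalanced $C_3$ on $\{v_1,v_2,v_3\}$ is untouched by the deletion, so $\Gamma''$ is still unbalanced; and since $\Gamma''$ is obtained from $\Gamma'$ by deleting an edge, every subgraph of $\Gamma''$ is a subgraph of $\Gamma'$, whence $\Gamma''$ is still $\mathcal{K}_{3,3}^-$-free. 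As $\Gamma''$ has the same $n$ vertices and $\lambda_1(A(\Gamma''))>\lambda_1(A(\Gamma'))=\lambda_1(A(\Gamma))$, this contradicts the maximality of the index of $\Gamma$. Hence no negative edge can lie off the triangle, which is exactly the claim.

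The one genuinely delicate point is the choice of elementary operation. Lemma \ref{l5}(i) offers both \emph{deleting} $v_rv_s$ and \emph{reversing its sign} as index-increasing moves, but sign reversal is not monotone on the family of $\mathcal{K}_{3,3}^-$ subgraphs: turning a negative edge positive changes the balance of any $K_{3,3}$ that uses it and could conceivably create a forbidden unbalanced copy. Deletion, by contrast, can only destroy subgraphs, so it is the safe operation—it simultaneously preserves $\mathcal{K}_{3,3}^-$-freeness and, thanks to $v_rv_s$ lying off the triangle, preserves unbalancedness. I would also keep the argument uniform over the two possible locations of $v_rv_s$ (exactly one endpoint in $\{v_1,v_2,v_3\}$, or both endpoints outside it), since in either case the triangle survives the deletion and the same contradiction is reached.
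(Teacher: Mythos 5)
Your proof is correct and follows essentially the same route as the paper: delete the off-triangle negative edge, invoke Lemma \ref{l5}(i) to get a strict index increase, observe that the resulting graph is still unbalanced and $\mathcal{K}_{3,3}^{-}$-free, and contradict maximality. Your additional checks (using Claim \ref{c1} to ensure at least one of $x_r,x_s$ is nonzero, and preferring deletion over sign reversal to preserve $\mathcal{K}_{3,3}^{-}$-freeness) are details the paper leaves implicit, but they do not change the argument.
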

\begin{proof}
Otherwise, suppose that there is a negative edge $v_iv_j$ of $\Gamma^\prime$ such that $v_iv_j\notin E(C_3)$. Then we can construct a new unbalanced signed graph $\Gamma^{\prime\prime}$ by removing the negative edge $v_{i}v_{j}$ such that $\Gamma^{\prime\prime}$ is a $\mathcal{K}_{3,3}^{-}$-free unbalanced signed graph and $\lambda_1(A(\Gamma^{\prime\prime}))>\lambda_1(A(\Gamma^{\prime}))$ by $(i)$ of Lemma \ref{l5}. This contradicts the maximality of $\lambda_1(A(\Gamma^\prime))$. Thus, Claim \ref{c2} holds.
\end{proof}
 Assume that $k$ is the smallest positive integer such that $x_{k}=\max_{1\leq i\leq n}x_{i}$. By Claim \ref{c1}, $x_k>0$ clearly.
\begin{claim}\label{o3}
The unbalanced $C_3$ contains exactly one negative edge.
\end{claim}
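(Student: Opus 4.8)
The plan is to argue by contradiction: suppose the unbalanced $C_3$ on $\{v_1,v_2,v_3\}$ has three negative edges, so that $v_1v_2,v_1v_3,v_2v_3$ are all negative. By Claim \ref{c2} these are then the \emph{only} negative edges of $\Gamma'$, so every edge incident to a vertex outside $\{v_1,v_2,v_3\}$ is positive. I will use the maximum-entry vertex $v_k$ together with the edge-deletion operation of Lemma \ref{l5}(i) to produce a $\mathcal{K}_{3,3}^{-}$-free unbalanced signed graph with strictly larger index, contradicting the maximality of $\lambda_1(A(\Gamma'))$.

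First I would locate $v_k$. If $v_k\in\{v_1,v_2,v_3\}$, say $k=1$, then the eigenvalue equation at $v_1$ reads $\lambda_1 x_1=-x_2-x_3+\sum_{j\ge 4,\,v_j\sim v_1}x_j$; since $X\ge 0$ and each $x_j\le x_k=x_1$, the right-hand side is at most $(n-3)x_1$, forcing $\lambda_1\le n-3<n-2$, which contradicts $\lambda_1(A(\Gamma'))\ge n-2$. Hence $k\ge 4$ and $v_k$ lies outside the triangle. Next, the equation $\lambda_1 x_k=\sum_{v_j\sim v_k}x_j\le d_{\Gamma'}(v_k)\,x_k$ gives $d_{\Gamma'}(v_k)\ge\lambda_1\ge n-2$, so $v_k$ fails to be adjacent to at most one vertex. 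In particular $v_k$ is adjacent to at least two of $v_1,v_2,v_3$, say $v_a$ and $v_b$, and both edges $v_av_k,v_bv_k$ are positive because $v_k$ lies outside the triangle.

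Then I would delete from $\Gamma'$ a negative triangle edge different from $v_av_b$; call the result $\Gamma''$. Its two endpoints have nonnegative, not-both-zero entries by Claim \ref{c1}, so Lemma \ref{l5}(i) yields $\lambda_1(A(\Gamma''))>\lambda_1(A(\Gamma'))$. Because $\Gamma''$ is obtained by an edge deletion, every $K_{3,3}$ in its underlying graph already occurs in that of $\Gamma'$ with identical signs, so $\Gamma''$ remains $\mathcal{K}_{3,3}^{-}$-free. Finally, the edge $v_av_b$ is still negative in $\Gamma''$, and together with the positive edges $v_av_k,v_bv_k$ it forms a negative triangle; hence $\Gamma''$ is still unbalanced. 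Thus $\Gamma''$ is a $\mathcal{K}_{3,3}^{-}$-free unbalanced signed graph on $n$ vertices with index exceeding that of $\Gamma'$, contradicting maximality. Therefore the $C_3$ cannot have three negative edges, and being unbalanced it must contain exactly one.

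The step I expect to need the most care is guaranteeing that the reduced graph is still unbalanced: deleting an edge could in principle destroy every negative cycle. The degree bound $d_{\Gamma'}(v_k)\ge n-2$ is exactly what rescues this, since it forces a vertex $v_k$ that is a common neighbor of two triangle vertices and thereby supplies a negative triangle surviving the deletion. Preservation of $\mathcal{K}_{3,3}^{-}$-freeness, by contrast, is automatic for a deletion (it would be delicate for a sign reversal, which is why I prefer deletion here).
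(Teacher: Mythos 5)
Your proof is correct and takes essentially the same route as the paper: rule out the maximum-entry vertex $v_k$ lying on the triangle via the eigenvalue equation, deduce $d_{\Gamma^{\prime}}(v_k)\ge n-2$, and then delete a negative triangle edge so that Lemma \ref{l5}(i) yields a strictly larger index while a negative triangle through $v_k$ keeps the graph unbalanced and $\mathcal{K}_{3,3}^{-}$-freeness is preserved. Your handling is in fact marginally cleaner: by deleting a negative edge that avoids the two triangle vertices adjacent to $v_k$, you treat $d_{\Gamma^{\prime}}(v_k)=n-2$ and $d_{\Gamma^{\prime}}(v_k)=n-1$ uniformly, whereas the paper disposes of the $d_{\Gamma^{\prime}}(v_k)=n-2$ case separately through a contradiction with the minimal choice of $k$.
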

\begin{proof}
Otherwise, the unbalanced $C_3$ contains three negative edges of $\Gamma^\prime$.  Note that there is at most one zero entry of $X$ by Claim \ref{c1}. If $k\leq 3$, then
\begin{align*}
\lambda_1(A(\Gamma^{\prime}))x_k&=-(x_1+x_2+x_3)+x_k+\sum_{v_i\in N_{\Gamma^\prime}(v_k)\setminus V(C_3)}x_i\\&\leq-(x_1+x_2+x_3)+x_k+(n-3)x_k\\&<(n-3)x_k.
\end{align*}
This implies that $\lambda_1(A(\Gamma^{\prime}))< n-3$, a contradiction. Thus, $k>4$. And then
\begin{center}
$(n-2)x_k\leq\lambda_1(A(\Gamma^{\prime}))x_k=\sum\limits_{v_i\in N_{\Gamma^\prime}(v_k)}x_i\leq d_{\Gamma^{\prime}}(v_k)x_k$,
\end{center}
that is, $d_{\Gamma^{\prime}}(v_k)=n-2$ or $n-1$. If $d_{\Gamma^{\prime}}(v_k)=n-2$, then $x_i=x_k$ for any $v_i\in N_{\Gamma^{\prime}}(v_k)$. It means that at least one of $x_i$ with $i=1,2,3$ is equal to $x_k$, contradicting the choice of $k$. Thus, $d_{\Gamma^{\prime}}(v_k)=n-1$. Now, we can construct a new unbalanced signed graph $\Gamma^{\prime\prime}$ by removing the negative edge $v_{1}v_{2}$ such that $\Gamma^{\prime\prime}$ is  still a $\mathcal{K}_{3,3}^{-}$-free unbalanced signed graph but $\lambda_1(A(\Gamma^{\prime\prime}))>\lambda_1(A(\Gamma^{\prime}))$ by $(i)$ of Lemma \ref{l5}. This contradicts the maximality of $\lambda_1(A(\Gamma^\prime))$. So, the unbalanced $C_3$ contains exactly one negative edge.
\end{proof}
 Claims \ref{c2} and \ref{o3} show that $\Gamma^{\prime}$ contains only one negative edge, and it is the negative edge of the unbalanced $C_3$. Assume that this edge is $v_1v_2$.
\begin{claim}\label{CC1}
If $X>0$, then $k \geq 3$ and $d_{\Gamma^{\prime}}(v_k)=n-1$.
\end{claim}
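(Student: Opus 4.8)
The plan is to run the eigenvalue equation $\lambda_1(A(\Gamma^\prime))x_i=\sum_{v_j\sim v_i}\sigma(v_iv_j)x_j$ at the relevant vertices, using the two facts already secured: by Claims~\ref{c2} and \ref{o3} the only negative edge of $\Gamma^\prime$ is $v_1v_2$, and by Lemma~\ref{l1} we have $\lambda_1(A(\Gamma^\prime))\ge n-2$.

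First I would establish $k\ge 3$, that is, that the maximum entry is not attained at an endpoint of the negative edge. Suppose to the contrary that $k=1$. The eigenvalue equation at $v_1$, isolating the unique negative contribution, reads
\[
\lambda_1(A(\Gamma^\prime))x_1=-x_2+\sum_{v_j\in N_{\Gamma^\prime}(v_1)\setminus\{v_2\}}x_j\le -x_2+(n-2)x_1,
\]
because $v_1$ has at most $n-2$ positive neighbours, each with entry at most $x_1=x_k$. Together with $\lambda_1(A(\Gamma^\prime))\ge n-2$ and $x_1>0$ this forces $x_2\le 0$, contradicting $X>0$. The identical computation at $v_2$ excludes $k=2$, so $k\ge 3$.

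Next, since $k\ge 3$ the vertex $v_k$ is not incident to $v_1v_2$, so every edge at $v_k$ is positive and the eigenvalue equation gives $\lambda_1(A(\Gamma^\prime))x_k=\sum_{v_j\in N_{\Gamma^\prime}(v_k)}x_j\le d_{\Gamma^\prime}(v_k)x_k$. Hence $d_{\Gamma^\prime}(v_k)\ge n-2$, so $d_{\Gamma^\prime}(v_k)\in\{n-2,n-1\}$, and the crux is to rule out $d_{\Gamma^\prime}(v_k)=n-2$. In that case the displayed inequality is forced to be an equality, which yields $x_j=x_k$ for every neighbour $v_j$ of $v_k$ and leaves $v_k$ with a single non-neighbour. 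Since at most one of $v_1,v_2,v_3$ can be that non-neighbour (and when $k=3$ the vertices $v_1,v_2$ are adjacent to $v_3=v_k$ through the unbalanced $C_3$), at least two vertices among $v_1,v_2,v_3$ distinct from $v_k$ are neighbours of $v_k$; these then attain the maximum value $x_k$ at indices strictly smaller than $k$, contradicting the minimality of $k$. Therefore $d_{\Gamma^\prime}(v_k)=n-1$.

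The only delicate point is the equality analysis when $d_{\Gamma^\prime}(v_k)=n-2$ and the small amount of bookkeeping about which of $v_1,v_2,v_3$ may fail to be adjacent to $v_k$; the rest is a direct application of the eigenequation and the bound $\lambda_1(A(\Gamma^\prime))\ge n-2$. I do not expect any genuine obstacle beyond carefully combining the minimality of $k$ with the fixed $C_3$ structure.
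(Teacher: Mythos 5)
Your proposal is correct and follows essentially the same three-step argument as the paper: ruling out $k\le 2$ via the eigenequation at an endpoint of the negative edge $v_1v_2$ together with $\lambda_1(A(\Gamma'))\ge n-2$, deducing $d_{\Gamma'}(v_k)\ge n-2$ from $\lambda_1 x_k\le d_{\Gamma'}(v_k)x_k$, and then killing the case $d_{\Gamma'}(v_k)=n-2$ through the forced equality $x_i=x_k$ on all neighbors of $v_k$. The only (harmless) divergence is in that last step: where the paper re-runs the eigenequation at a neighbor $v_1$ of $v_k$ to obtain $\lambda_1 x_1\le -x_2+(d_{\Gamma'}(v_1)-1)x_k<(n-2)x_k$, you instead invoke the minimality of $k$ -- which is equally valid, and is in fact exactly the device the paper itself uses at the corresponding point in the proof of Claim \ref{o3}.
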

\begin{proof}
If $k<3$, then $(n-2)x_k\leq\lambda_1(A(\Gamma^{\prime}))x_k\leq-x_{3-k}+(n-2)x_k<(n-2)x_k$, a contradiction. Thus, $k \geq 3$. Note that
\begin{center}
$(n-2)x_k\leq\lambda_1(A(\Gamma^{\prime}))x_k=\sum\limits_{v_i\in N_{\Gamma^\prime}(v_k)}x_i\leq d_{\Gamma^{\prime}}(v_k)x_k$,
\end{center}
then $d_{\Gamma^{\prime}}(v_k) \geq n-2$. If $d_{\Gamma^{\prime}}(v_k)=n-2$, then the entry of $X$ corresponding to each neighbor of $v_k$ equals $x_k$. Note that one of $v_1$ and $v_2$ is adjacent to $v_k$. Without loss of generality, assume that $x_1=x_k$, then $(n-2)x_k\leq\lambda_1(A(\Gamma^{\prime}))x_k=\lambda_1(A(\Gamma^{\prime}))x_1\leq-x_2+(d_{\Gamma^{\prime}}(v_1)-1)x_k<(n-2)x_k$, a contradiction. Hence,  $d_{\Gamma^{\prime}}(v_k)=n-1$.
\end{proof}
Next, we divide the proof into the following two cases.
\begin{case}\label{Case1}
There exists an integer $r$ such that $x_r=0$ for $1\leq r \leq n$.
\end{case}
Firstly, we assert that $d_{\Gamma^{\prime}}(v_r)\geq 1$. Otherwise, $d_{\Gamma^{\prime}}(v_r)=0$. Let $\Gamma^{\prime\prime}=\Gamma^{\prime}+v_1v_r$, then $\Gamma^{\prime\prime}$ is a $\mathcal{K}_{3,3}^{-}$-free unbalanced signed graph and $\lambda_1(A(\Gamma^{\prime\prime}))>\lambda_1(A(\Gamma^{\prime}))$ by $(i)$ of Lemma \ref{l5}, this contradicts the maximality of $\lambda_1(A(\Gamma^\prime))$. Thus, $d_{\Gamma^{\prime}}(v_r)\geq 1$. If $r\geq 3$, then $0=\lambda_1(A(\Gamma^{\prime}))x_r=\sum_{v_i\in N_{\Gamma^{\prime}}(v_r)}x_i>0$, a contradiction. Thus, $r=1$ or $2$. Without loss of generality, assume that $r=1$. Then  $k\geq 2$. Note that
\begin{center}
	$(n-2)x_k\leq\lambda_1(A(\Gamma^{\prime}))x_k=\sum\limits_{v_i\in N_{\Gamma^\prime}(v_k)}x_i\leq d_{\Gamma^{\prime}}(v_k)x_k$,
\end{center}
then  $d_{\Gamma^{\prime}}(v_k) \geq n-2$. If  $d_{\Gamma^{\prime}}(v_k)=n-2$, then each of the $n-2$ entries  of $X$ corresponding to the neighbors of $v_k$ is equal to $x_k$. It implies that $x_2=\cdots=x_n$. If $d_{\Gamma^{\prime}}(v_k)=n-1$, then
\begin{center}
$(n-2)x_{{k}}\leq\lambda_{1}(A(\Gamma^{\prime}))x_{{k}}=x_{1}+\sum\limits_{v_{i}\in N_{\Gamma^{\prime}}(v_{k})\setminus\{v_{1}\}}x_{i}\leq(d_{\Gamma^{\prime}}(v_{{k}})-1)x_{{k}}=(n-2)x_k$.
\end{center}
Consequently, $x_2=\cdots=x_n$. This means that $d_{\Gamma^{\prime}}(v_i)=n-2$ or $n-1$ and $v_i$ is adjacent to all other vertices $V(\Gamma^{\prime})\backslash\{v_1\}$ for any $i\in[2,n]$. Therefore, $\Gamma^{\prime}[V(\Gamma^{\prime})\backslash\{v_1\}]\cong(K_{n-1},+)$. If there exists an integer $i$ such that $d_{\Gamma^{\prime}}(v_i)=n-1$ for $i\in[4,n]$, then $\Gamma^{\prime}$ contains an unbalanced ${K}_{3,3}$, a contradiction. Therefore,  $\Gamma^{\prime}$ is switching isomorphic to $\Gamma_{n,3}$ and $\lambda_1(A(\Gamma^\prime))=n-2$.
\begin{case}
$X>0$.
\end{case}
By Claim \ref{CC1}, $k \geq 3$ and $d_{\Gamma^{\prime}}(v_k)=n-1$. Without loss of generality, assume that $k=3$ and $d_{\Gamma^{\prime}}(v_1)\geq d_{\Gamma^{\prime}}(v_2)$.
If $\Gamma^{\prime}$ does not contain an unbalanced ${K}_4$ as a signed subgraph, then $\Gamma^{\prime}$ is switching isomorphic to $\Gamma_{n,3}$ and $\lambda_1(A(\Gamma^\prime))=n-2$ by Lemma \ref{l3}. Next, we assume that $\Gamma^{\prime}$ contains an unbalanced ${K}_4$ as a signed subgraph. From Claims \ref{c2} and \ref{o3}, we may assume that $V(K_4)=\{v_1,v_2,v_3,v_4\}$. After the above preparations, we will further discuss in six subcases.
\begin{subcase}
$d_{\Gamma^{\prime}}(v_1)= d_{\Gamma^{\prime}}(v_2)=3$, i.e., $N_{\Gamma^{\prime}}[v_1]=N_{\Gamma^{\prime}}[v_2]=\{v_1,v_2,v_3,v_4\}$.
\end{subcase}
Obviously, $\Gamma^{\prime}[V(\Gamma^{\prime})\backslash\{v_1,v_2\}]\cong(K_{n-2},+)$ by $(i)$ of Lemma \ref{l5}.  Note that $U$ is a $\mathcal{K}_{3,3}^{-}$-free unbalanced signed graph. Thus, $\Gamma^{\prime}$ is switching isomorphic to $U-v_1v_5-v_2v_5$.  However, $\lambda_1(A(U-v_1v_5-v_2v_5))<\lambda_1(A(U))$, this  contradicts the maximality of $\lambda_1(A(\Gamma^{\prime}))$.
\begin{subcase}
 $d_{\Gamma^{\prime}}(v_1)=4$ and $d_{\Gamma^{\prime}}(v_2)=3$, i.e., $N_{\Gamma^{\prime}}[v_2]=\{v_1,v_2,v_3,v_4\}\subset N_{\Gamma^{\prime}}[v_1]$.
\end{subcase}
 Without loss of generality, assume that $N_{\Gamma^{\prime}}(v_1)=\{v_2,v_3,v_4,v_5\}$. By $(i)$ of Lemma \ref{l5}, $\Gamma^{\prime}[V(\Gamma^{\prime})\backslash\{v_1,v_2\}]\cong(K_{n-2},+)$.  Note that $U$ is a $\mathcal{K}_{3,3}^{-}$-free unbalanced signed graph. So, $\Gamma^{\prime}$ is switching isomorphic to $U-v_2v_5$. However, $\lambda_1(A(U-v_2v_5))<\lambda_1(A(U))$, this also contradicts the maximality of $\lambda_1(A(\Gamma^{\prime}))$.
\begin{subcase}
$d_{\Gamma^{\prime}}(v_1)=d_{\Gamma^{\prime}}(v_2)=4$.
\end{subcase}
Without loss of generality, assume that $N_{\Gamma^{\prime}}(v_1)=\{v_2,v_3,v_4,v_5\}$. If $v_2v_5 \in E(\Gamma^{\prime})$, then $\Gamma^{\prime}[V(\Gamma^{\prime})\backslash\{v_1,v_2\}]\cong(K_{n-2},+)$ by $(i)$ of Lemma \ref{l5}. Therefore, $\Gamma^{\prime}$ is switching isomorphic to $U$. If $v_2v_5 \notin E(\Gamma^{\prime})$, then we assume that $N_{\Gamma^{\prime}}(v_2)=\{v_1,v_3,v_4,v_6\}$ by $d_{\Gamma^{\prime}}(v_2)=4$. Clearly, $v_6$ is adjacent to either $v_4$ or $v_5$ by $(i)$ of Lemma \ref{l5}. Otherwise, $\Gamma^{\prime}$ contains an unbalanced $K_{3,3}$, a contradiction. If $v_4v_6\in E(\Gamma^{\prime})$, by $(i)$ of Lemma \ref{l5}, then $\Gamma^{\prime}[V(\Gamma^{\prime})\backslash\{v_1,v_2\}]\cong(K_{n-2},+)-v_5v_6$. So, $\Gamma^{\prime}$ is switching isomorphic to $Z_1$. If $v_5v_6\in E(\Gamma^{\prime})$, by $(i)$ of Lemma \ref{l5}, then $\Gamma^{\prime}[V(\Gamma^{\prime})\backslash\{v_1,v_2\}]\cong(K_{n-2},+)-v_4v_5-v_4v_6$. Thus, $\Gamma^{\prime}$ is switching isomorphic to $Z_2$. However, $\lambda_1(A(\Gamma_{n,3}))>\max\{\lambda_1(A(U)),\lambda_1(A(Z_1)),\lambda_1(A(Z_2))\}$ by Lemma \ref{ll3}. This contradicts the maximality of $\lambda_1(A(\Gamma^{\prime}))$.
\begin{subcase}
$d_{\Gamma^{\prime}}(v_1)\geq 5$ and $d_{\Gamma^{\prime}}(v_2)=3$.
\end{subcase}
We first assert that $2\leq |N_{\Gamma^{\prime}}(v_1)\cap N_{\Gamma^{\prime}}(v_4)|\leq3$. Otherwise, $|N_{\Gamma^{\prime}}(v_1)\cap N_{\Gamma^{\prime}}(v_4)|\geq4$. Without loss of generality, assume that $\{v_{2},v_{3},v_{5},v_{6}\}\subseteq N_{\Gamma^{\prime}}(v_{1})\cap N_{\Gamma^{\prime}}(v_{4})$, then $\Gamma^{\prime}[N_{\Gamma^{\prime}}[v_1]]$ contains an unbalanced $K_{3,3}$, a contradiction. Next, we claim that $|N_{\Gamma^{\prime}}(v_1)\cap N_{\Gamma^{\prime}}(v_4)|=3$. Otherwise, $|N_{\Gamma^{\prime}}(v_1)\cap N_{\Gamma^{\prime}}(v_4)|=2$, i.e., $N_{\Gamma^{\prime}}(v_1)\cap N_{\Gamma^{\prime}}(v_4)=\{v_2,v_3\}$. Assume that $v_5 \in N_{\Gamma^{\prime}}(v_{1})$, let $\Gamma^{\prime\prime}=\Gamma^{\prime}+v_4v_5$, then $\Gamma^{\prime\prime}$ is a $\mathcal{K}_{3,3}^{-}$-free unbalanced signed graph and $\lambda_1(A(\Gamma^{\prime\prime}))>\lambda_1(A(\Gamma^{\prime}))$ by $(i)$ of Lemma \ref{l5}, a contradiction. Thus, $|N_{\Gamma^{\prime}}(v_1)\cap N_{\Gamma^{\prime}}(v_4)|=3$. Assume that $N_{\Gamma^{\prime}}(v_1)\cap N_{\Gamma^{\prime}}(v_4)=\{v_2, v_3, v_5\}$. Finally, we assert that $d_{\Gamma^{\prime}}(v_4)=4$. Otherwise, assume that $v_6\in N_{\Gamma^{\prime}}(v_1)$ and $v_7\in N_{\Gamma^{\prime}}(v_4)$. If $x_1\geq x_4$, let  $\Gamma^{\prime\prime}=\Gamma^{\prime}+v_1v_7-v_4v_7$, then $\Gamma^{\prime\prime}$ is a $\mathcal{K}_{3,3}^{-}$-free unbalanced signed graph and $\lambda_1(A(\Gamma^{\prime\prime}))>\lambda_1(A(\Gamma^{\prime}))$ by $(ii)$ of Lemma \ref{l5}, a contradiction.  If $x_1<x_4$, let  $\Gamma^{\prime\prime}=\Gamma^{\prime}+v_4v_6-v_1v_6$, then $\Gamma^{\prime\prime}$ is a $\mathcal{K}_{3,3}^{-}$-free unbalanced signed graph and $\lambda_1(A(\Gamma^{\prime\prime}))>\lambda_1(A(\Gamma^{\prime}))$ by $(ii)$ of Lemma \ref{l5}, a contradiction. Thus, $d_{\Gamma^{\prime}}(v_4)=4$. By $(i)$ of Lemma \ref{l5}, $\Gamma^{\prime}[V(\Gamma^{\prime})\backslash\{v_{2},v_{4}\}]\cong(K_{n-2},+)$. Thus, $\Gamma^{\prime}$ is switching isomorphic to $W$. However, $\lambda_1(A(\Gamma_{n,3}))>\lambda_1(A(W))$ by Lemma \ref{ll3}. This contradicts the maximality of $\lambda_1(A(\Gamma^{\prime}))$.

For convenience, we denote $\lambda_1(A(\Gamma^{\prime}))x_i$ by $\lambda_1x_i$ for all $x_i\in X$.
\begin{subcase}
$d_{\Gamma^{\prime}}(v_1)\geq 5$ and $d_{\Gamma^{\prime}}(v_2)=4$.
\end{subcase}
We first consider that $|N_{\Gamma^{\prime}}(v_1)\cap N_{\Gamma^{\prime}}(v_2)|=2$, i.e., $N_{\Gamma^{\prime}}(v_1)\cap N_{\Gamma^{\prime}}(v_2)=\{v_3,v_4\}$. Assume that $v_6\in N_{\Gamma^{\prime}}(v_2)$ by $d_{\Gamma^{\prime}}(v_2)=4$. Now, we assert that $|N_{\Gamma^{\prime}}(v_1)\cap N_{\Gamma^{\prime}}(v_6)|=3$. Otherwise, $|N_{\Gamma^{\prime}}(v_1)\cap N_{\Gamma^{\prime}}(v_6)|\neq 3$. If $|N_{\Gamma^{\prime}}(v_1)\cap N_{\Gamma^{\prime}}(v_6)|=2$, i.e., $N_{\Gamma^{\prime}}(v_1)\cap N_{\Gamma^{\prime}}(v_6)=\{v_2,v_3\}$, let $\Gamma^{\prime\prime}=\Gamma^{\prime}+v_4v_6$, then $\Gamma^{\prime\prime}$ is a $\mathcal{K}_{3,3}^{-}$-free unbalanced signed graph and $\lambda_1(A(\Gamma^{\prime\prime}))>\lambda_1(A(\Gamma^{\prime}))$ by $(i)$ of Lemma \ref{l5}, a contradiction. If $|N_{\Gamma^{\prime}}(v_1)\cap N_{\Gamma^{\prime}}(v_6)|\geq 4$, then $\Gamma^{\prime}$ must contain an unbalanced $K_{3,3}$, a contradiction. Thus, $|N_{\Gamma^{\prime}}(v_1)\cap N_{\Gamma^{\prime}}(v_6)|=3$, i.e., $N_{\Gamma^{\prime}}(v_1)\cap N_{\Gamma^{\prime}}(v_6)=\{v_2,v_3,u\}$. Next, we will divide it into two cases. If $u=v_4$, by $(i)$ of Lemma \ref{l5}, then $\Gamma^{\prime}[N_{\Gamma^{\prime}}(v_1)\backslash\{v_2,v_4\}]\cong (K_{d_{\Gamma^{\prime}}(v_1)-2},+)$ and $v_i$ is adjacent to every vertex in $V(\Gamma^{\prime})\backslash\{v_1,v_2\}$ for all $v_i\in V(\Gamma^{\prime})\backslash(N_{\Gamma^{\prime}}[v_1]\cup N_{\Gamma^{\prime}}[v_2])$. We first claim that $|N_{\Gamma^{\prime}}(v_1)\cap N_{\Gamma^{\prime}}(v_4)|=3$. Otherwise, $|N_{\Gamma^{\prime}}(v_1)\cap N_{\Gamma^{\prime}}(v_4)|\neq 3$. If $|N_{\Gamma^{\prime}}(v_1)\cap N_{\Gamma^{\prime}}(v_4)|=2$, i.e., $N_{\Gamma^{\prime}}(v_1)\cap N_{\Gamma^{\prime}}(v_4)=\{v_2,v_3\}$, let $\Gamma^{\prime\prime}=\Gamma^{\prime}+v_4v_5$, where $v_5\in N_{\Gamma^{\prime}}(v_1)$, then $\Gamma^{\prime\prime}$ is a $\mathcal{K}_{3,3}^{-}$-free unbalanced signed graph and $\lambda_1(A(\Gamma^{\prime\prime}))>\lambda_1(A(\Gamma^{\prime}))$ by $(i)$ of Lemma \ref{l5}, a contradiction. If $|N_{\Gamma^{\prime}}(v_1)\cap N_{\Gamma^{\prime}}(v_4)|\geq 4$, then $\Gamma^{\prime}$ contains an unbalanced $K_{3,3}$, a contradiction. Thus, $|N_{\Gamma^{\prime}}(v_1)\cap N_{\Gamma^{\prime}}(v_4)|=3$. Without loss of generality, assume that $N_{\Gamma^{\prime}}(v_1)\cap N_{\Gamma^{\prime}}(v_4)=\{v_2,v_3,v_5\}$. Note that $\lambda_1x_1=\sum_{v_i\in N_{\Gamma^{\prime}}(v_1)\setminus\{v_2,v_3,v_4\}}x_i-x_2+x_3+x_4$,
$\lambda_1x_2=-x_1+x_3+x_4+x_6$. Then $\lambda_1(x_1-x_2)=\sum_{v_i\in N_{\Gamma^{\prime}}(v_1)\setminus\{v_2,v_3,v_4\}}x_i+x_1-x_2-x_6$, that is, $(\lambda_1-1)(x_1-x_2)=\sum_{v_i\in N_{\Gamma^{\prime}}(v_1)\setminus\{v_2,v_3,v_4\}}x_i-x_6$. It is evident that $\lambda_1(\sum_{v_i\in N_{\Gamma^{\prime}}(v_1)\setminus\{v_2,v_3,v_4\}}x_i)>2x_3+x_4+\sum_{v_j\in V(\Gamma^{\prime})\setminus(N_{\Gamma^{\prime}}[v_1]\cup N_{\Gamma^{\prime}}[v_2])}x_j$ and  $\lambda_1x_6=x_2+x_3+x_4+$ $\sum_{v_j\in V(\Gamma^{\prime})\setminus(N_{\Gamma^{\prime}}[v_1]\cup N_{\Gamma^{\prime}}[v_2])}x_j$. Thus,
$\lambda_1(\sum_{v_i\in N_{\Gamma^{\prime}}(v_1)\setminus\{v_2,v_3,v_4\}}x_i-x_6)>x_3-x_2>0$ and $x_1>x_2$. Let $\Gamma^{\prime\prime}=\Gamma^{\prime}+v_1v_6+v_6w-v_2v_6-v_4v_6$ for all $w\in N_{\Gamma^{\prime}}(v_1)\setminus\{v_2,v_3,v_4\}$, then $\Gamma^{\prime\prime}$ is a $\mathcal{K}_{3,3}^{-}$-free unbalanced signed graph. Note that $\lambda_1(\sum_{w\in N_{\Gamma^{\prime}}(v_1)\setminus\{v_2,v_3,v_4\}}x_w)>2x_1+2x_3+x_5+\sum_{v_j\in V(\Gamma^{\prime})\setminus(N_{\Gamma^{\prime}}[v_1]\cup N_{\Gamma^{\prime}}[v_2])}x_j$, $\lambda_1x_4=x_1+x_2+x_3+x_5+x_6+\sum_{v_j\in V(\Gamma^{\prime})\setminus(N_{\Gamma^{\prime}}[v_1]\cup N_{\Gamma^{\prime}}[v_2])}x_j$.  Since $x_1>x_2$ and $x_3> x_6$, $\lambda_1(\sum_{w\in N_{\Gamma^{\prime}}(v_1)\setminus\{v_2,v_3,v_4\}}x_w-x_4)>x_1+x_3-x_2-x_6>0$. Thus,
\begin{align*}
	\lambda_1(A(\Gamma^{\prime\prime}))-\lambda_1(A(\Gamma^{\prime}))&\geq X^T(A(\Gamma^{\prime\prime})-A(\Gamma^{\prime}))X\\
	&=2x_6(\sum_{w\in N_{\Gamma^{\prime}}(v_1)\setminus\{v_2,v_3,v_4\}}x_w-x_4+x_1-x_2)\\&>0,
\end{align*} 
  a contradiction. If $u\neq v_4$, by $(i)$ of Lemma \ref{l5}, then $\Gamma^{\prime}[N_{\Gamma^{\prime}}(v_1)\backslash\{v_2,v_4\}]\cong (K_{d_{\Gamma^{\prime}}(v_1)-2},+)$ and $v_i$ is adjacent to every vertex in $V(\Gamma^{\prime})\backslash\{v_1,v_2\}$ for all $v_i\in V(\Gamma^{\prime})\backslash(N_{\Gamma^{\prime}}[v_1]\cup N_{\Gamma^{\prime}}[v_2])$. Similarly,  $x_1>x_2$. Let $\Gamma^{\prime\prime}=\Gamma^{\prime}+v_1v_6-v_2v_6$, then $\Gamma^{\prime\prime}$ is a $\mathcal{K}_{3,3}^{-}$-free unbalanced signed graph and $\lambda_1(A(\Gamma^{\prime\prime}))>\lambda_1(A(\Gamma^{\prime}))$ by $(ii)$ of Lemma \ref{l5}, a contradiction.

Next, we assume that $|N_{\Gamma^{\prime}}(v_1)\cap N_{\Gamma^{\prime}}(v_2)|=3, i.e., N_{\Gamma^{\prime}}(v_1)\cap N_{\Gamma^{\prime}}(v_2)=\{v_3,v_4,v_5\}$. We first assert that $2\leq |N_{\Gamma^{\prime}}(v_1)\cap N_{\Gamma^{\prime}}(v_4)|\leq 3$. Otherwise, $|N_{\Gamma^{\prime}}(v_1)\cap N_{\Gamma^{\prime}}(v_4)|\geq 4$, then $\Gamma^{\prime}$ contains an unbalanced $K_{3,3}$, a contradiction. Assume that $v_6\in N_{\Gamma^{\prime}}(v_1)$ by $d_{\Gamma^{\prime}}(v_1)\geq 5$. Now, we will divide into the following three cases. 

$(1)$ $N_{\Gamma^{\prime}}(v_1)\cap N_{\Gamma^{\prime}}(v_4)=\{v_2,v_3,v_5\}$, then $v_5u \notin E(\Gamma^{\prime})$ for all $u\in N_{\Gamma^{\prime}}(v_1)\setminus\{v_2,v_3,v_4,v_5\}$ since $\Gamma^{\prime}$ is a $\mathcal{K}_{3,3}^{-}$-free unbalanced signed graph. By $(i)$ of Lemma \ref{l5}, $\Gamma^{\prime}[N_{\Gamma^{\prime}}(v_1)\backslash\{v_2,v_4,v_5\}]\cong( K_{d_{\Gamma^{\prime}}(v_1)-3},+)$. If $5\leq d_{\Gamma^{\prime}}(v_1)\leq 6$, let $\Gamma^{\prime\prime}=\Gamma^{\prime}+v_4v_6+v_5v_6-v_1v_6$, then $\Gamma^{\prime\prime}$ is a $\mathcal{K}_{3,3}^{-}$-free unbalanced signed graph. Note that $\lambda_1(x_4+x_5)\geq 2x_1+2x_2+2x_3+x_4+x_5$, $\lambda_1x_1=-x_2+x_3+x_4+x_5+\sum_{v_i\in N_{\Gamma^{\prime}}(v_1)\setminus\{v_2,v_3,v_4,v_5\}}x_i$. Then $\lambda_1(x_4+x_5-x_1)\geq 2x_1+3x_2+x_3-\sum_{v_i\in N_{\Gamma^{\prime}}(v_1)\setminus\{v_2,v_3,v_4,v_5\}}x_i$. That is, $(\lambda_1+2)(x_4+x_5-x_1)\geq 2x_4+2x_5+3x_2+x_3-\sum_{v_i\in N_{\Gamma^{\prime}}(v_1)\setminus\{v_2,v_3,v_4,v_5\}}x_i$. It is evident that $2x_4+2x_5+x_3>\sum_{v_i\in N_{\Gamma^{\prime}}(v_1)\setminus\{v_2,v_3,v_4,v_5\}}x_i$. Thus, $x_4+x_5-x_1> 0$ and
\begin{align*}
	\lambda_1(A(\Gamma^{\prime\prime}))-\lambda_1(A(\Gamma^{\prime}))&\geq X^T(A(\Gamma^{\prime\prime})-A(\Gamma^{\prime}))X
	=2x_6(x_4+x_5-x_1)>0,
\end{align*} 
 a contradiction. If $d_{\Gamma^{\prime}}(v_1)\geq 7$, let $\Gamma^{\prime\prime}=\Gamma^{\prime}+v_5w-v_2v_5$ for all $w\in N_{\Gamma^{\prime}}(v_1)\setminus\{v_2,v_3,v_4,v_5\}$, then $\Gamma^{\prime\prime}$ is a $\mathcal{K}_{3,3}^{-}$-free unbalanced signed graph. Note that  $\lambda_1(\sum_{w\in N_{\Gamma^{\prime}}(v_1)\setminus\{v_2,v_3,v_4,v_5\}}x_w)$ $>3x_1+3x_3$, $\lambda_1x_2=-x_1+x_3+x_4+x_5$.  
 Then $\lambda_1(\sum_{w\in N_{\Gamma^{\prime}}(v_1)\setminus\{v_2,v_3,v_4,v_5\}}x_w-x_2)>4x_1+2x_3-x_4-x_5>0$ and 
\begin{align*}
	\lambda_1(A(\Gamma^{\prime\prime}))-\lambda_1(A(\Gamma^{\prime}))&\geq X^T(A(\Gamma^{\prime\prime})-A(\Gamma^{\prime}))X
	=2x_5(\sum_{w\in N_{\Gamma^{\prime}}(v_1)\setminus\{v_2,v_3,v_4,v_5\}}x_w-x_2)>0,
\end{align*} 
a contradiction.

$(2)$ $N_{\Gamma^{\prime}}(v_1)\cap N_{\Gamma^{\prime}}(v_4)=\{v_2,v_3,v_6\}$. By $(i)$ of Lemma \ref{l5},  $\Gamma^{\prime}[N_{\Gamma^{\prime}}(v_1)\backslash\{v_2,v_4,v_5\}]\cong ( K_{d_{\Gamma^{\prime}}(v_1)-3},+)$. We first claim that $2\leq |N_{\Gamma^{\prime}}(v_1)\cap N_{\Gamma^{\prime}}(v_5)|\leq 3$. Otherwise, $|N_{\Gamma^{\prime}}(v_1)\cap N_{\Gamma^{\prime}}(v_5)|\geq 4$, then $\Gamma^{\prime}$ contains an unbalanced $K_{3,3}$, a contradiction. Next, we assert that $v_4v_5,v_5v_6 \notin E(\Gamma^{\prime})$. Otherwise, $\Gamma^{\prime}$ contains an unbalanced $K_{3,3}$, a contradiction. Let $\Gamma^{\prime\prime}=\Gamma^{\prime}+v_4v_5+v_5v_6-v_1v_5-v_2v_5$, then $\Gamma^{\prime\prime}$ is a $\mathcal{K}_{3,3}^{-}$-free unbalanced signed graph. Note that $\lambda_1(x_4+x_6)\geq 2x_1+2x_3+x_2+x_4+x_6+\sum_{i\in N_{\Gamma^{\prime}}(v_1)\setminus\{v_2,v_3,v_4,v_5,v_6\}}x_i$, $\lambda_1(x_1+x_2)=-x_1-x_2+2x_3+2x_4+2x_5+x_6+\sum_{i\in N_{\Gamma^{\prime}}(v_1)\setminus\{v_2,v_3,v_4,v_5,v_6\}}x_i$. Then $\lambda_1(x_4+x_6-x_1-x_2)\geq 3x_1+2x_2-x_4-2x_5$. That is, $(\lambda_1+2)(x_4+x_6-x_1-x_2)\geq x_1+x_4+2x_6-2x_5$. Note that $\lambda_1(2x_6+x_4-2x_5)>2x_4+x_1+x_3+x_6-x_2>0$. Thus, $x_4+x_6-x_1-x_2>0$ and
\begin{align*}
	\lambda_1(A(\Gamma^{\prime\prime}))-\lambda_1(A(\Gamma^{\prime}))&\geq X^T(A(\Gamma^{\prime\prime})-A(\Gamma^{\prime}))X
	=2x_5(x_4+x_6-x_1-x_2)>0,
\end{align*} 
a contradiction. 

$(3)$ $N_{\Gamma^{\prime}}(v_1)\cap N_{\Gamma^{\prime}}(v_4)=\{v_2,v_3\}$, then we assert that $d_{\Gamma^{\prime}}(v_1)=5$. Otherwise, $d_{\Gamma^{\prime}}(v_1)\geq 6$ and $v_i\in N_{\Gamma^{\prime}}(v_1)$ for $2\leq i\leq 7$. Under this condition, we can claim that $|N_{\Gamma^{\prime}}(v_1)\cap N_{\Gamma^{\prime}}(v_5)|=3$. Otherwise, $|N_{\Gamma^{\prime}}(v_1)\cap N_{\Gamma^{\prime}}(v_5)|\neq 3$. If $|N_{\Gamma^{\prime}}(v_1)\cap N_{\Gamma^{\prime}}(v_5)|=2$, i.e., $N_{\Gamma^{\prime}}(v_1)\cap N_{\Gamma^{\prime}}(v_5)=\{v_2,v_3\}$, let $\Gamma^{\prime\prime}=\Gamma^{\prime}+v_5v_6$, then $\Gamma^{\prime\prime}$ is a $\mathcal{K}_{3,3}^{-}$-free unbalanced signed graph and $\lambda_1(A(\Gamma^{\prime\prime}))>\lambda_1(A(\Gamma^{\prime}))$ by $(i)$ of Lemma \ref{l5}, a contradiction. If $|N_{\Gamma^{\prime}}(v_1)\cap N_{\Gamma^{\prime}}(v_5)|\geq 4$, then $\Gamma^{\prime}$ contains an unbalanced $K_{3,3}$, a contradiction. Thus, $|N_{\Gamma^{\prime}}(v_1)\cap N_{\Gamma^{\prime}}(v_5)|=3$. Without loss of generality, assume that $N_{\Gamma^{\prime}}(v_1)\cap N_{\Gamma^{\prime}}(v_5)=\{v_2,v_3,v_6\}$. Let $\Gamma^{\prime\prime}=\Gamma^{\prime}+v_4v_7$, then $\Gamma^{\prime\prime}$ is a $\mathcal{K}_{3,3}^{-}$-free unbalanced signed graph and $\lambda_1(A(\Gamma^{\prime\prime}))>\lambda_1(A(\Gamma^{\prime}))$ by $(i)$ of Lemma \ref{l5}, a contradiction. Thus, $d_{\Gamma^{\prime}}(v_1)=5$. Assume that $N_{\Gamma^{\prime}}(v_1)=\{v_2,v_3,v_4,v_5,v_6\}$. By $(i)$ of Lemma \ref{l5}, $v_i$ is adjacent to every vertex in $V(\Gamma^{\prime})\backslash\{v_1,v_2\}$ for all $v_i\in V(\Gamma^{\prime})\backslash N_{\Gamma^{\prime}}[v_1]$ and $v_5v_6 \in E(\Gamma^{\prime})$.  Let $\Gamma^{\prime\prime}=\Gamma^{\prime}+v_4v_5+v_4v_6-v_1v_4-v_2v_4$, then $\Gamma^{\prime\prime}$ is a $\mathcal{K}_{3,3}^{-}$-free unbalanced signed graph. Note that $\lambda_1(x_5+x_6)>2x_1+2x_3+x_2+x_5+x_6$,  $\lambda_1(x_1+x_2)=-x_1-x_2+2x_3+2x_4+2x_5+x_6$. Then  $\lambda_1(x_5+x_6-x_1-x_2)>3x_1+2x_2-2x_4-x_5$. That is, $(\lambda_1+2)(x_4+x_6-x_1-x_2)>x_1+2x_6+x_5-2x_4$. Note that $\lambda_1x_4=\sum_{v_i\in V(\Gamma^{\prime})\setminus N_{\Gamma^{\prime}}[v_1]}x_i+x_1+x_2+x_3$, $\lambda_1(2x_6+x_5)>2(\sum_{v_i\in V(\Gamma^{\prime})\setminus N_{\Gamma^{\prime}}[v_1]}x_i)+3x_1+3x_3+x_2$. Then $\lambda_1(2x_6+x_5-2x_4)>x_1+x_3-x_2>0$. Thus, $x_5+x_6-x_1-x_2>0$ and
\begin{align*}
	\lambda_1(A(\Gamma^{\prime\prime}))-\lambda_1(A(\Gamma^{\prime}))&\geq X^T(A(\Gamma^{\prime\prime})-A(\Gamma^{\prime}))X
	=2x_4(x_5+x_6-x_1-x_2)>0,
\end{align*} 
a contradiction.
\begin{subcase}
	$d_{\Gamma^{\prime}}(v_1)\geq d_{\Gamma^{\prime}}(v_2)\geq 5$.
\end{subcase}
Firstly, we consider that $|N_{\Gamma^{\prime}}(v_1)\cap N_{\Gamma^{\prime}}(v_2)|=2, i.e., N_{\Gamma^{\prime}}(v_1)\cap N_{\Gamma^{\prime}}(v_2)=\{v_3,v_4\}$. By $(i)$ of Lemma \ref{l5}, $\Gamma^{\prime}[N_{\Gamma^{\prime}}(v_1)\backslash\{v_2,v_4\}]\cong  (K_{d_{\Gamma^{\prime}}(v_1)-2},+)$, $\Gamma^{\prime}[N_{\Gamma^{\prime}}(v_2)\backslash\{v_1,v_4\}]\cong (K_{d_{\Gamma^{\prime}}(v_2)-2},+)$, $v_i$ is adjacent to every vertex in $V(\Gamma^{\prime})\backslash\{v_1,v_2\}$ for all $v_i\in V(\Gamma^{\prime})\backslash(N_{\Gamma^{\prime}}[v_1]\cup N_{\Gamma^{\prime}}[v_2])$ and $|N_{\Gamma^{\prime}}(v_4)\cap N_{\Gamma^{\prime}}(v_1)|\leq3$, $|N_{\Gamma^{\prime}}(v_4)\cap N_{\Gamma^{\prime}}(v_2)|\leq3$ since $\Gamma^{\prime}$ is a $\mathcal{K}_{3,3}^{-}$-free unbalanced signed graph. Now, we will consider two subcases.

 $(1)$ $|N_{\Gamma^{\prime}}(v_4)\cap N_{\Gamma^{\prime}}(v_1)|=|N_{\Gamma^{\prime}}(v_4)\cap N_{\Gamma^{\prime}}(v_2)|=2$ or $|N_{\Gamma^{\prime}}(v_4)\cap N_{\Gamma^{\prime}}(v_1)|=2$, $|N_{\Gamma^{\prime}}(v_4)\cap N_{\Gamma^{\prime}}(v_2)|=3$ or $|N_{\Gamma^{\prime}}(v_4)\cap N_{\Gamma^{\prime}}(v_1)|=3$, $|N_{\Gamma^{\prime}}(v_4)\cap N_{\Gamma^{\prime}}(v_2)|=2$. If $x_1 \geq x_2$, let $\Gamma^{\prime\prime}=\Gamma^{\prime}+v_1w-v_2w$ for all $w\in N_{\Gamma^{\prime}}(v_2)\setminus\{v_1,v_3,v_4\}$, then $\Gamma^{\prime\prime}$ is a $\mathcal{K}_{3,3}^{-}$-free unbalanced signed graph and $\lambda_1(A(\Gamma^{\prime\prime}))>\lambda_1(A(\Gamma^{\prime}))$ by $(ii)$ of Lemma \ref{l5}, a contradiction. If $x_1 < x_2$, let $\Gamma^{\prime\prime}=\Gamma^{\prime}+v_2u-v_1u$ for all $u\in N_{\Gamma^{\prime}}(v_1)\setminus\{v_2,v_3,v_4\}$, then $\Gamma^{\prime\prime}$ is a $\mathcal{K}_{3,3}^{-}$-free unbalanced signed graph and $\lambda_1(A(\Gamma^{\prime\prime}))>\lambda_1(A(\Gamma^{\prime}))$ by $(ii)$ of Lemma \ref{l5}, a contradiction. 
 
 $(2)$ $|N_{\Gamma^{\prime}}(v_4)\cap N_{\Gamma^{\prime}}(v_1)|=|N_{\Gamma^{\prime}}(v_4)\cap N_{\Gamma^{\prime}}(v_2)|=3$. Without loss of generality, assume that $N_{\Gamma^{\prime}}(v_4)\cap N_{\Gamma^{\prime}}(v_1)=\{v_2,v_3,v_5\}$ and $N_{\Gamma^{\prime}}(v_4)\cap N_{\Gamma^{\prime}}(v_2)=\{v_1,v_3,v_7\}$. If $x_1\geq x_2$, let  $\Gamma^{\prime\prime}=\Gamma^{\prime}+v_1w-v_2w-v_4v_7+v_7u$ for all $w\in N_{\Gamma^{\prime}}(v_2)\setminus\{v_1,v_3,v_4\}$ and $u\in N_{\Gamma^{\prime}}(v_1)\setminus\{v_2,v_3,v_4\}$, then $\Gamma^{\prime\prime}$ is a $\mathcal{K}_{3,3}^{-}$-free unbalanced signed graph. Note that $\lambda_1(\sum_{u\in N_{\Gamma^{\prime}}(v_1)\setminus\{v_2,v_3,v_4\}}x_u)>2x_1+2x_3+x_5+\sum_{v_i\in V(\Gamma^{\prime})\setminus(N_{\Gamma^{\prime}}[v_1]\cup N_{\Gamma^{\prime}}[v_2])}x_i$, $\lambda_1x_4=x_1+x_2+x_3+x_5+x_7+\sum_{v_i\in V(\Gamma^{\prime})\setminus(N_{\Gamma^{\prime}}[v_1]\cup N_{\Gamma^{\prime}}[v_2])}x_i$. Thus, $\lambda_1(\sum_{u\in N_{\Gamma^{\prime}}(v_1)\setminus\{v_2,v_3,v_4\}}x_u-x_4)>x_1+x_3-x_2-x_7>0$ by $x_1\geq x_2$ and $x_3>x_7$. Then
\begin{align*}
	\lambda_1(A(\Gamma^{\prime\prime}))-\lambda_1(A(\Gamma^{\prime}))&\geq X^T(A(\Gamma^{\prime\prime})-A(\Gamma^{\prime}))X>2x_7(\sum_{u\in N_{\Gamma^{\prime}}(v_1)\setminus\{v_2,v_3,v_4\}}x_u-x_4)>0,
\end{align*} 
 a contradiction. If $x_1<x_2$, let  $\Gamma^{\prime\prime}=\Gamma^{\prime}+v_2u-v_1u-v_4v_5+v_5w$ for all $w\in N_{\Gamma^{\prime}}(v_2)\setminus\{v_1,v_3,v_4\}$ and $u\in N_{\Gamma^{\prime}}(v_1)\setminus\{v_2,v_3,v_4\}$, then $\Gamma^{\prime\prime}$ is a $\mathcal{K}_{3,3}^{-}$-free unbalanced signed graph. Note that $\lambda_1(\sum_{w\in N_{\Gamma^{\prime}}(v_2)\setminus\{v_1,v_3,v_4\}}x_w)>2x_2+2x_3+x_7+\sum_{v_i\in V(\Gamma^{\prime})\setminus(N_{\Gamma^{\prime}}[v_1]\cup N_{\Gamma^{\prime}}[v_2])}x_i$, $\lambda_1x_4=x_1+x_2+x_3+x_5+x_7+\sum_{v_i\in V(\Gamma^{\prime})\setminus(N_{\Gamma^{\prime}}[v_1]\cup N_{\Gamma^{\prime}}[v_2])}x_i$. Thus, $\lambda_1(\sum_{w\in N_{\Gamma^{\prime}}(v_2)\setminus\{v_1,v_3,v_4\}}x_w$ $-x_4)>x_2+x_3-x_1-x_5>0$ by $x_2>x_1$ and $x_3>x_5$. Then
\begin{align*}
	\lambda_1(A(\Gamma^{\prime\prime}))-\lambda_1(A(\Gamma^{\prime}))&\geq X^T(A(\Gamma^{\prime\prime})-A(\Gamma^{\prime}))X
	>2x_5(\sum_{w\in N_{\Gamma^{\prime}}(v_2)\setminus\{v_1,v_3,v_4\}}x_w-x_4)>0,
\end{align*} 
 a contradiction.
 
Secondly, we assume that $3\leq|N_{\Gamma^{\prime}}(v_1)\cap N_{\Gamma^{\prime}}(v_2)|<|N_{\Gamma^{\prime}}(v_2)|-1$ and set $M=N_{\Gamma^{\prime}}(v_1)\cap N_{\Gamma^{\prime}}(v_2)$. By $(i)$ of Lemma \ref{l5}, $\Gamma^{\prime}[N_{\Gamma^{\prime}}(v_1)\backslash(M\cup\{v_2\})]\cong(K_{d_{\Gamma^{\prime}}(v_1)-|M|-1},+)$, $\Gamma^{\prime}[N_{\Gamma^{\prime}}(v_2)\backslash(M\cup\{v_1\})]\cong(K_{d_{\Gamma^{\prime}}(v_2)-|M|-1},+)$, $v_i$ is adjacent to every vertex in $V(\Gamma^{\prime})\backslash\{v_1,v_2\}$ for all $v_i\in V(\Gamma^{\prime})\backslash$ $(N_{\Gamma^{\prime}}[v_1]\cup N_{\Gamma^{\prime}}[v_2])$ and $|N_{\Gamma^{\prime}}(v_4)\cap N_{\Gamma^{\prime}}(v_1)|$ $\leq3$, $|N_{\Gamma^{\prime}}(v_4)\cap N_{\Gamma^{\prime}}(v_2)|\leq3$ since $\Gamma^{\prime}$ is a $\mathcal{K}_{3,3}^{-}$-free unbalanced signed graph. Now, we will consider it in three subcases. 

$(1)$ $v_4$ is adjacent to a vertex in $M\backslash\{v_3,v_4\}$. If $x_1\geq x_2$, let  $\Gamma^{\prime\prime}=\Gamma^{\prime}+v_1w-v_2w$ for all $w\in N_{\Gamma^{\prime}}(v_2)\backslash(M\cup\{v_1\})$, then $\Gamma^{\prime\prime}$ is a $\mathcal{K}_{3,3}^{-}$-free unbalanced signed graph and $\lambda_1(A(\Gamma^{\prime\prime}))>\lambda_1(A(\Gamma^{\prime}))$ by $(ii)$ of Lemma \ref{l5}, a contradiction. If $x_1<x_2$, let  $\Gamma^{\prime\prime}=\Gamma^{\prime}+v_2u-v_1u$ for all $u\in N_{\Gamma^{\prime}}(v_1)\backslash(M\cup\{v_2\})$, then $\Gamma^{\prime\prime}$ is a $\mathcal{K}_{3,3}^{-}$-free unbalanced signed graph and $\lambda_1(A(\Gamma^{\prime\prime}))>\lambda_1(A(\Gamma^{\prime}))$ by $(ii)$ of Lemma \ref{l5}, a contradiction. 

$(2)$ $v_4$ is adjacent to a vertex in $(N_{\Gamma^{\prime}}(v_1)\cup N_{\Gamma^{\prime}}(v_2))\backslash(M\cup\{v_1,v_2\})$. Without loss of generality, assume that $N_{\Gamma^{\prime}}(v_2)\cap N_{\Gamma^{\prime}}(v_4)=\{v_1,v_3,v_7\}$ and $v_7\notin M$, then $v_7v_i \notin E(\Gamma^{\prime})$ for all $v_i\in N_{\Gamma^{\prime}}(v_1)\setminus\{v_2,v_3,v_4\}$ since $\Gamma^{\prime}$ is a $\mathcal{K}_{3,3}^{-}$-free unbalanced signed graph. By performing the same operation as in $(i)$, we can derive a contradiction. 

$(3)$ $v_4$ is adjacent to a vertex in $N_{\Gamma^{\prime}}(v_1)\backslash(M\cup\{v_2\})$ and a vertex in $N_{\Gamma^{\prime}}(v_2)\backslash(M\cup\{v_1\})$. Without loss of generality, assume that $N_{\Gamma^{\prime}}(v_1)\cap N_{\Gamma^{\prime}}(v_4)=\{v_2,v_3,v_6\}$, $N_{\Gamma^{\prime}}(v_2)\cap N_{\Gamma^{\prime}}(v_4)=\{v_1,v_3,v_7\}$ and $v_6,v_7\notin M$. Then $v_7v_i \notin E(\Gamma^{\prime})$ for all $v_i \in N_{\Gamma^{\prime}}(v_1)\backslash\{v_2,v_3,v_4\}$ and $v_6v_j \notin E(\Gamma^{\prime})$ for all $v_j \in N_{\Gamma^{\prime}}(v_2)\backslash\{v_1,v_3,v_4\}$ since $\Gamma^{\prime}$ is a $\mathcal{K}_{3,3}^{-}$-free unbalanced signed graph. Let  $\Gamma^{\prime\prime}=\Gamma^{\prime}+v_7v_i-v_2v_7$ for all $v_i \in N_{\Gamma^{\prime}}(v_1)\backslash\{v_2,v_3,v_4\}$, then $\Gamma^{\prime\prime}$ is a $\mathcal{K}_{3,3}^{-}$-free unbalanced signed graph. Note that $\lambda_1(\sum_{i\in N_{\Gamma^{\prime}}(v_1)\setminus\{v_2,v_3,v_4\}}x_i)>|N_{\Gamma^{\prime}}(v_1)\setminus\{v_2,v_3,v_4\}|x_3+\sum_{w\in N_{\Gamma^{\prime}}(v_1)\setminus(M\cup\{v_2\})}x_w+x_2+x_4$, $\lambda_1x_2=\sum_{a\in M\setminus\{v_3,v_4\}}x_a+\sum_{b\in N_{\Gamma^{\prime}}(v_2)\setminus(M\cup \{v_1,v_7\})}x_b-x_1+x_3+x_4+x_7$. It is obvious  that $|M\setminus\{v_3,v_4\}|+|N_{\Gamma^{\prime}}(v_1)\backslash(M\cup \{v_2\})|= |N_{\Gamma^{\prime}}(v_1)\backslash\{v_2,v_3,v_4\}|$ and $|M\setminus\{v_3,v_4\}|x_3\geq \sum_{a\in M\setminus\{v_3,v_4\}}x_a$. Since $d_{\Gamma^{\prime}}(v_1)\geq d_{\Gamma^{\prime}}(v_2)$, $|N_{\Gamma^{\prime}}(v_1)\backslash(M\cup\{v_2\})|\geq|N_{\Gamma^{\prime}}(v_2)\backslash(M\cup\{v_1\})|$. Thus, $|N_{\Gamma^{\prime}}(v_1)\backslash(M\cup \{v_2\})|x_3 \geq x_3+\sum_{b\in N_{\Gamma^{\prime}}(v_2)\setminus(M\cup \{v_1,v_7\})}x_b$ and $|N_{\Gamma^{\prime}}(v_1)\setminus\{v_2,v_3,v_4\}|x_3>\sum_{a\in M\setminus\{v_3,v_4\}}x_a+\sum_{b\in N_{\Gamma^{\prime}}(v_2)\setminus(M\cup \{v_1,v_7\})}x_b+x_3$. Clearly, $\sum_{w\in N_{\Gamma^{\prime}}(v_1)\setminus(M\cup\{v_2\})}$ $x_w+x_2>x_7$. Hence, $\sum_{i\in N_{\Gamma^{\prime}}(v_1)\setminus\{v_2,v_3,v_4\}}$ $x_i-x_2>0$ and
\begin{align*}
	\lambda_1(A(\Gamma^{\prime\prime}))-\lambda_1(A(\Gamma^{\prime}))&\geq X^T(A(\Gamma^{\prime\prime})-A(\Gamma^{\prime}))X
	=2(\sum_{i\in N_{\Gamma^{\prime}}(v_1)\setminus\{v_2,v_3,v_4\}}x_i-x_2)>0,
\end{align*} 
 a contradiction.

Finally, we consider $N_{\Gamma^{\prime}}[v_2]\subseteq N_{\Gamma^{\prime}}[v_1]$, then we will consider it in two subcases.

$(1)$ $N_{\Gamma^{\prime}}[v_2]\subsetneqq N_{\Gamma^{\prime}}[v_1]$. By $(i)$ of Lemma \ref{l5}, $\Gamma^{\prime}[N_{\Gamma^{\prime}}[v_1]\backslash N_{\Gamma^{\prime}}[v_2]]\cong(K_{d_{\Gamma^{\prime}}(v_1)-|N_{\Gamma^{\prime}}[v_2]|+1},+)$, $v_i$ is adjacent to every vertex in $V(\Gamma^{\prime})\backslash$ $\{v_1,v_2\}$ for all $v_i\in V(\Gamma^{\prime})\backslash N_{\Gamma^{\prime}}[v_1]$, $|N_{\Gamma^{\prime}}(v_4)\cap N_{\Gamma^{\prime}}(v_1)|\leq3$ and $d_{\Gamma^{\prime}[N_{\Gamma^{\prime}}[v_1]]}(v_i)\leq4$ for all $v_i\in N_{\Gamma^{\prime}}(v_2)\backslash\{v_1,v_3,v_4\}$ since $\Gamma^{\prime}$ is a $\mathcal{K}_{3,3}^{-}$-free unbalanced signed graph. Let $S=N_{\Gamma^{\prime}}(v_2)\backslash\{v_1,v_3,v_4\}$, $T=N_{\Gamma^{\prime}}[v_1]\backslash N_{\Gamma^{\prime}}[v_2]$. Clearly, $|S|\geq 2$ by $d_{\Gamma^{\prime}}(v_2)\geq 5$.  We first assert that there is at most one isolated vertex in subgraph $\Gamma^{\prime}[N_{\Gamma^{\prime}}[v_1]\backslash\{v_2,v_3\}]$. Otherwise, assume that $v_i, v_j$ are two isolated vertices in subgraph $\Gamma^{\prime}[N_{\Gamma^{\prime}}(v_1)\backslash\{v_2,v_3\}]$. Let $\Gamma^{\prime\prime}=\Gamma^{\prime}+v_iv_j$, then $\Gamma^{\prime\prime}$ is a $\mathcal{K}_{3,3}^{-}$-free unbalanced signed graph and $\lambda_1(A(\Gamma^{\prime\prime}))>\lambda_1(A(\Gamma^{\prime}))$ by $(i)$ of Lemma \ref{l5}, a contradiction. If $N_{\Gamma^{\prime}}(v_4)\cap S\neq \phi$, then we will further discuss in three subcases. $(a)$ $|S|\geq 2, |T|=2$. Without loss of generality, assume that $v_5,v_6\in S$, $v_7,v_8\in T$ and $v_4v_5\in E(\Gamma^{\prime})$. We first claim that  $|S|\geq 3$. Otherwise, $|S|=2$, by $(i)$ of Lemma \ref{l5}, assume that $v_6v_7\in E(\Gamma^{\prime})$. Let $\Gamma^{\prime\prime}=\Gamma^{\prime}+v_4v_8+v_5v_8+v_6v_8-v_1v_8$, then $\Gamma^{\prime\prime}$ is a $\mathcal{K}_{3,3}^{-}$-free unbalanced signed graph and $\lambda_1(A(\Gamma^{\prime\prime}))>\lambda_1(A(\Gamma^{\prime}))$, a contradiction. Thus, $|S|\geq 3$. Next, we consider that $N_{\Gamma^{\prime}}(v_7)\cap S\neq\phi$, assume that $v_6v_7\in  E(\Gamma^{\prime})$. Then $v_7w \notin E(\Gamma^{\prime})$ for $w \in S\backslash\{v_6\}$ since $\Gamma^{\prime}$ is a $\mathcal{K}_{3,3}^{-}$-free unbalanced signed graph.
Let $\Gamma^{\prime\prime}=\Gamma^{\prime}+v_7v_4+v_7w-v_7v_1$ for all $w\in S\setminus \{v_6\}$, then $\Gamma^{\prime\prime}$ is a $\mathcal{K}_{3,3}^{-}$-free unbalanced signed graph. Note that $\lambda_1x_1=-x_2+x_3+x_4+x_5+x_6+x_7+x_8+\sum_{u\in S\setminus\{v_5,v_6\}}x_u$, $\lambda_1x_4=x_1+x_2+x_3+x_5+\sum_{v_i\in V(\Gamma^{\prime})\setminus N_{\Gamma^{\prime}}[v_1]}$ $x_i$ and $\lambda_1(\sum_{w\in S\setminus\{v_6\}}x_w)>(|S|-1)x_1+(|S|-1)x_2+(|S|-1)x_3+x_4+A$, where $A$ is the sum of $(|S|-3)$ $x$-components in $x_8+\sum_{u\in S\setminus\{v_5,v_6\}}x_u$. Clearly, $\lambda_1(\sum_{w\in S\setminus\{v_6\}}x_w$ $+x_4)>|S|x_1+|S|x_2+|S|x_3+x_4+x_5+A$. Then $\lambda_1(\sum_{w\in S\setminus\{v_6\}}x_w+x_4-x_1)>|S|x_1+(|S|+1)x_2+(|S|-1)x_3-x_6-x_7+A-x_8-\sum_{u\in S\setminus\{v_5,v_6\}}x_u$. That is, $(\lambda_1+|S|)(\sum_{w\in S\setminus\{v_6\}}x_w+x_4-x_1)>|S|(\sum_{w\in S\setminus\{v_6\}}x_w+x_4)+(|S|+1)x_2+(|S|-1)x_3-x_6-x_7+A-x_8-\sum_{u\in S\setminus\{v_5,v_6\}}x_u$. It is evident that $(|S|-1)x_3>x_6+x_7$ and $|S|(\sum_{w\in S\setminus\{v_6\}}x_w+x_4)>x_8+\sum_{u\in S\setminus\{v_5,v_6\}}x_u-A$. Thus, $\sum_{w \in S\setminus \{v_6\}}x_w+x_4-x_1>0$ and
\begin{align*}
	\lambda_1(A(\Gamma^{\prime\prime}))-\lambda_1(A(\Gamma^{\prime}))&\geq X^T(A(\Gamma^{\prime\prime})-A(\Gamma^{\prime}))X
	=2x_7(\sum_{w \in S\setminus \{v_6\}}x_w+x_4-x_1)>0,
\end{align*} 
a contradiction. If $N_{\Gamma^{\prime}}(v_4)\cap S=\phi$ or $|T|=1$, then we can derive a contradiction through the same operation. $(b)$ $|S|=2, |T|\geq 3$. Without loss of generality, assume that $v_5,v_6\in S$ and $v_4v_5\in E(\Gamma^{\prime})$. Let $\Gamma^{\prime\prime}=\Gamma^{\prime}+v_5w-v_2v_5-v_4v_5$ for all $w\in T$, then $\Gamma^{\prime\prime}$ is a $\mathcal{K}_{3,3}^{-}$-free unbalanced signed graph. Note that $\lambda_1x_2=-x_1+x_3+x_4+x_5+x_6$, $\lambda_1x_4=x_1+x_2+x_3+x_5+\sum_{v_i\in V(\Gamma^{\prime})\setminus N_{\Gamma^{\prime}}[v_1]}x_i$ and $\lambda_1(\sum_{w\in T}x_w)>(|T|-1)(\sum_{w\in T}x_w)+\sum_{v_i\in V(\Gamma^{\prime})\backslash N_{\Gamma^{\prime}}[v_1]}x_i+3x_3+x_6$. Then $\lambda_1(\sum_{w\in T}x_w$ $-x_2-x_4)>x_3-x_2-x_4-2x_5+(|T|-1)(\sum_{w\in T}x_w)$. That is, $(\lambda_1-1)(\sum_{w\in T}x_w-x_2-x_4)>x_3+(|T|-2)(\sum_{w\in T}x_w)-2x_5$. Since $|T|-2\geq 1$, $(|T|-2)(\sum_{w\in T}x_w)+x_3>2x_5$. Thus, $\sum_{w\in T}x_w-x_2-x_4>0$ and
\begin{align*}
	\lambda_1(A(\Gamma^{\prime\prime}))-\lambda_1(A(\Gamma^{\prime}))&\geq X^T(A(\Gamma^{\prime\prime})-A(\Gamma^{\prime}))X
	=2x_5(\sum_{w\in T}x_w-x_2-x_4)>0,
\end{align*} 
 a contradiction. $(c)$ $|S|\geq 3, |T|\geq 3$. Without loss of generality, assume that $v_5\in S$, $v_c\in T$ and $v_4v_5\in E(\Gamma^{\prime})$. Let  $\Gamma^{\prime\prime}=\Gamma^{\prime}+v_cv_4+v_cw-v_cv_1$ for all $w\in S\setminus (S\cap N_{\Gamma^{\prime}}(v_c))$, then $\Gamma^{\prime\prime}$ is a $\mathcal{K}_{3,3}^{-}$-free unbalanced signed graph. Note that $\lambda_1(\sum_{w\in S\setminus(S\cap N_{\Gamma^{\prime}}(v_c))}x_w+x_4)>|S|x_3+x_4+A$, $\lambda_1x_1=-x_2+x_3+x_4+A+B$, where $A$ is the sum of $(|S|-2)$ $x$-components in $\sum_{v_j\in S\cup T}x_j$ and $A+B=\sum_{v_j\in S\cup T}x_j$. Then $\lambda_1(\sum_{w\in S\setminus(S\cap N_{\Gamma^{\prime}}(v_c))}x_w+x_4-x_1)>(|S|-1)x_3-B$. If $|S|-1\geq|T|+2$, then $(|S|-1)x_3-B>0$. Thus, $\sum_{w\in S\setminus(S\cap N_{\Gamma^{\prime}}(v_c))}x_w+x_4-x_1>0$ and
\begin{align*}
	\lambda_1(A(\Gamma^{\prime\prime}))-\lambda_1(A(\Gamma^{\prime}))&\geq X^T(A(\Gamma^{\prime\prime})-A(\Gamma^{\prime}))X
	=2x_c(\sum_{w\in S\setminus(S\cap N_{\Gamma^{\prime}}(v_c))}x_w+x_4-x_1)>0,
\end{align*} 
a contradiction. Thus, $|S|\leq|T|+2$. Let  $\Gamma^{\prime\prime}=\Gamma^{\prime}+v_5u-v_2v_5-v_4v_5$ for all $u\in T$, then $\Gamma^{\prime\prime}$ is a $\mathcal{K}_{3,3}^{-}$-free unbalanced signed graph. Note that $(\lambda_1-1)(x_2+x_4)=2x_3+2x_5+\sum_{k\in S\setminus\{v_5\}}x_k+\sum_{v_i\in V(\Gamma^{\prime})\setminus N_{\Gamma^{\prime}}[v_1]}x_i$, $(\lambda_1-1)(\sum_{u\in T}x_u)>|T|x_3+\sum_{u\in T}x_u+\sum_{v_i\in V(\Gamma^{\prime})\setminus N_{\Gamma^{\prime}}[v_1]}x_i$. Then $(\lambda_1-1)(\sum_{u\in T}x_u-x_2-x_4)>(|T|-2)x_3+\sum_{u\in T}x_u-\sum_{k\in S\setminus\{v_5\}}x_k-2x_5$. It is evident that $\sum_{u\in T}x_u>2x_5$. If $|T|-2\geq |S|-1$, then $\sum_{u\in T}x_u-x_2-x_4>0$. Hence,
\begin{align*}
	\lambda_1(A(\Gamma^{\prime\prime}))-\lambda_1(A(\Gamma^{\prime}))&\geq X^T(A(\Gamma^{\prime\prime})-A(\Gamma^{\prime}))X
	=2x_5(\sum_{u\in T}x_u-x_2-x_4)>0,
\end{align*} 
 a contradiction. Thus, $|T|\leq |S|\leq |T|+2$. If $|S|=|T|$, then there exists a vertex in $T$ that is not adjacent to any vertex in $S$ by $v_4v_5\in E(\Gamma^{\prime})$. Without loss of generality, assume that this vertex is $v_r\in T$. Let $\Gamma^{\prime\prime}=\Gamma^{\prime}+v_rv_4+v_rw-v_rv_1$ for all $w\in S$, then $\Gamma^{\prime\prime}$ is a $\mathcal{K}_{3,3}^{-}$-free unbalanced signed graph. Note that $\lambda_1(\sum_{w\in S}x_w+x_4)>(|S|+1)x_3+x_4+C$, $\lambda_1x_1=-x_2+x_3+x_4+C+D$, where $C$ is the sum of $|S|$ $x$-components in $\sum_{v_j\in S\cup T}x_j$ and $C+D=\sum_{v_j\in S\cup T}x_j$. Then $\lambda_1(\sum_{w\in S}x_w+x_4-x_1)>|S|x_3-D>0$ by $|S|=|T|$. Thus, $\sum_{w\in S}x_w+x_4-x_1>0$ and 
\begin{align*}
	\lambda_1(A(\Gamma^{\prime\prime}))-\lambda_1(A(\Gamma^{\prime}))&\geq X^T(A(\Gamma^{\prime\prime})-A(\Gamma^{\prime}))X
	=2x_r(\sum_{w\in S}x_w+x_4-x_1)>0,
\end{align*} 
 a contradiction. Hence, $|S|\neq|T|$. If $|S|=|T|+1$, without loss of generality, assume that $v_5,v_6\in S, v_7\in T$ and $v_4v_5\in E(\Gamma^{\prime})$. Through the discussion of the case where $|S|=|T|$, we have $\Gamma^{\prime}[S\cup T\setminus\{v_5\}]\cong K_t\circ K_1$. Otherwise, we can derive a contradiction through the same operation. Assume that $v_6v_7\in E(\Gamma^{\prime})$. Let  $\Gamma^{\prime\prime}=\Gamma^{\prime}+v_6w-v_2v_6$ for all $w\in T\backslash\{v_7\}$, then $\Gamma^{\prime\prime}$ is a $\mathcal{K}_{3,3}^{-}$-free unbalanced signed graph. Note that $\lambda_1(\sum_{w\in T\setminus\{v_7\}}x_w)>(|T|-1)x_3+\sum_{u\in  S\setminus\{v_5,v_6\}}x_u+\sum_{k\in T}x_k$, $\lambda_1x_2=-x_1+x_3+x_4+x_5+x_6+\sum_{u\in  S\setminus\{v_5,v_6\}}x_u$. Then $\lambda_1(\sum_{w\in T\setminus\{v_7\}}x_w-x_2)>(|T|-2)x_3+\sum_{k\in T}x_k-x_4-x_5-x_6$.
 Clearly, $(|T|-2)x_3> x_4$ by $|T|\geq 3$.  Note that $\lambda_1(x_5+x_6)=2x_1+2x_2+2x_3+x_4+x_7$, $\lambda_1(\sum_{k\in T}x_k)>|T|x_3+|T|x_1+x_7+\sum_{k\in T}x_k$. Since $|T|\geq 3$ and $x_1>x_2$, $|T|x_3+|T|x_1>2x_1+2x_2+2x_3$. It is evident that $\sum_{k\in T}x_k>x_4$. Thus, $\sum_{k\in T}x_k>x_5+x_6$ and $\sum_{w\in T\setminus\{v_7\}}x_w-x_2>0$ and
\begin{align*}
	\lambda_1(A(\Gamma^{\prime\prime}))-\lambda_1(A(\Gamma^{\prime}))&\geq X^T(A(\Gamma^{\prime\prime})-A(\Gamma^{\prime}))X
	=2x_6(\sum_{w\in T\setminus\{v_7\}}x_w-x_2)>0,
\end{align*} 
a contradiction. Hence, $|S|\neq |T|+1$. If $|S|=|T|+2$, without loss of generality, assume that $v_5,v_a\in S$ and $v_4v_5 \in E(\Gamma^{\prime})$. Through the discussion of the case where $|S|=|T|$, there exists an isolated vertex in subgraph $\Gamma^{\prime}[N_{\Gamma^{\prime}}(v_1)\backslash\{v_2,v_3\}]$, assume that this vertex is $v_a$. By $(i)$ of Lemma \ref{l5}, then $\Gamma^{\prime}[S\cup T\setminus\{v_5,v_a\}]\cong K_t\circ K_1$. Otherwise, we can derive a contradiction through the same operation. Let  $\Gamma^{\prime\prime}=\Gamma^{\prime}+v_aw-v_2v_a$ for all $w\in T$, then $\Gamma^{\prime\prime}$ is a $\mathcal{K}_{3,3}^{-}$-free unbalanced signed graph. Note that $\lambda_1(\sum_{w\in T}x_w)>|T|x_3+\sum_{u\in  S\setminus\{v_5,v_a\}}x_u+(|T|-1)\sum_{w\in T}x_w$, $\lambda_1x_2=-x_1+x_3+x_4+x_5+x_a+\sum_{u\in  S\setminus\{v_5,v_a\}}x_u$. Then $\lambda_1(\sum_{w\in T}x_w-x_2)>(|T|-1)x_3+(|T|-1)(\sum_{w\in T}x_w)-x_4-x_5-x_a$.
Clearly, $(|T|-1)\sum_{w\in T}x_w>x_a$ and $(|T|-1)x_3> x_4+x_5$ by $|T|\geq 3$.  Thus, $\sum_{w\in T}x_w-x_2>0$ and
\begin{align*}
	\lambda_1(A(\Gamma^{\prime\prime}))-\lambda_1(A(\Gamma^{\prime}))&\geq X^T(A(\Gamma^{\prime\prime})-A(\Gamma^{\prime}))X
	=2x_a(\sum_{w\in T}x_w-x_2)>0,
\end{align*} 
a contradiction. Thus, $|S|\neq |T|+2$ and $N_{\Gamma^{\prime}}(v_4)\cap S=\phi$.  Next, we consider that $N_{\Gamma^{\prime}}(v_4)\cap T\neq \phi$, then we will further discuss in four subcases. $(a)$ $|S|\geq 2, |T|=1$. Without loss of generality, assume that  $v_5\in S$, $v_6\in T$ and $v_4v_6\in E(\Gamma^{\prime})$. Clearly, $v_6w\notin E(\Gamma^{\prime})$ for all $w\in S$ since $\Gamma^{\prime}$ is a $\mathcal{K}_{3,3}^{-}$-free unbalanced signed graph. We first consider that there exists an isolated vertex in the subgraph $\Gamma^{\prime}[S]$, assume that this vertex is $v_5$. Let  $\Gamma^{\prime\prime}=\Gamma^{\prime}+v_6w-v_1v_6$ for all $w\in S$, then $\Gamma^{\prime\prime}$ is a $\mathcal{K}_{3,3}^{-}$-free unbalanced signed graph. Note that $\lambda_1x_1=-x_2+x_3+x_4+x_5+x_6+\sum_{w\in S\backslash\{v_5\}}x_w$,
$\lambda_1(\sum_{w\in S}x_w)\geq|S|x_1+|S|x_2+|S|x_3+\sum_{w\in S\setminus\{v_5\}}x_w$. Then $\lambda_1(\sum_{w\in S}x_w-x_1)\geq|S|x_1+(|S|+1)x_2+(|S|-1)x_3-x_4-x_5-x_6$. That is, $(\lambda_1+|S|)(\sum_{w\in S}x_w-x_1)\geq|S|(\sum_{w\in S}x_w)+(|S|+1)x_2+(|S|-1)x_3-x_4-x_5-x_6>0$. Thus, $\sum_{w\in S}x_w-x_1>0$ and
\begin{align*}
	\lambda_1(A(\Gamma^{\prime\prime}))-\lambda_1(A(\Gamma^{\prime}))&\geq X^T(A(\Gamma^{\prime\prime})-A(\Gamma^{\prime}))X
	=2x_6(\sum_{w\in S}x_w-x_1)>0,
\end{align*} 
 a contradiction. Next, we assume that there is no isolated vertex in subgraph $\Gamma^{\prime}[S]$, then we can derive a contradiction through the same operation. $(b)$ $|S|\geq 2, |T|=2$. Without loss of generality, assume that $v_5,v_6\in S$, $v_7,v_8\in T$ and $v_4v_7\in E(\Gamma^{\prime})$. We first consider that $N_{\Gamma^{\prime}}(v_8)\cap S\neq \phi$ and there exists an isolated vertex in the subgraph $\Gamma^{\prime}[S]$, without loss of generality, assume that $v_6v_8\in E(\Gamma^{\prime})$ and $v_5$ is an isolated vertex in the subgraph $\Gamma^{\prime}[S]$. Obviously, $v_8w\notin E(\Gamma^{\prime})$ for all $w\in(S\cup\{v_4\})\backslash\{v_6\}$ since $\Gamma^{\prime}$ is a $\mathcal{K}_{3,3}^{-}$-free unbalanced signed graph. Let  $\Gamma^{\prime\prime}=\Gamma^{\prime}+v_8w-v_1v_8$ for all $w\in(S\cup\{v_4\})\backslash\{v_6\}$, then $\Gamma^{\prime\prime}$ is a $\mathcal{K}_{3,3}^{-}$-free unbalanced signed graph. Note that $\lambda_1(\sum_{w\in(S\cup\{v_4\})\setminus\{v_6\}}x_w)\geq|S|x_1+|S|x_2+|S|x_3+x_7+\sum_{u\in S\setminus\{v_5,v_6\}}x_u$, $\lambda_1x_1=-x_2+x_3+x_4+x_5+x_6+\sum_{u\in S\setminus\{v_5,v_6\}}x_u+x_7+x_8$. Then $\lambda_1(\sum_{w\in(S\cup\{v_4\})\setminus\{v_6\}}x_w-x_1)\geq|S|x_1+(|S|+1)x_2+(|S|-1)x_3-x_4-x_5-x_6-x_8$. That is, $(\lambda_1+|S|)(\sum_{w\in(S\cup\{v_4\})\setminus\{v_6\}}x_w-x_1)\geq(|S|+1)x_2+(|S|-1)x_3+|S|(\sum_{w\in(S\cup\{v_4\})\setminus\{v_6\}}x_w)-x_4-x_5-x_6-x_8$. It is evident that $|S|(\sum_{w\in(S\cup\{v_4\})\setminus\{v_6\}}x_w)\geq 2(x_4+x_5)$, then $(\lambda_1+|S|)(\sum_{w\in(S\cup\{v_4\})\setminus\{v_6\}}x_w-x_1)\geq(|S|+1)x_2+(|S|-1)x_3+x_4+x_5-x_6-x_8$. Obviously, $x_4+x_5>x_8$ and $(|S|-1)x_3>x_6$ by $|S|\geq 2$. Thus, $\sum_{w\in(S\cup\{v_4\})\setminus\{v_6\}}x_w-x_1>0$ and
\begin{align*}
	\lambda_1(A(\Gamma^{\prime\prime}))-\lambda_1(A(\Gamma^{\prime}))&\geq X^T(A(\Gamma^{\prime\prime})-A(\Gamma^{\prime}))X
	=2x_8(\sum_{w\in(S\cup\{v_4\})\setminus\{v_6\}}x_w-x_1)>0,
\end{align*} 
 a contradiction. Next, we assume that $N_{\Gamma^{\prime}}(v_8)\cap S=\phi$ or there is no isolated vertex in the subgraph $\Gamma^{\prime}[S]$, then we can derive a contradiction through the same operation.
$(c)$ $|S|= 2, |T|\geq 3$. Without loss of generality, assume that $v_5,v_6\in S$, $v_7,v_8,v_9\in T$ and $v_4v_7\in E(\Gamma^{\prime})$. We first consider that $v_5v_6\in E(\Gamma^{\prime})$,  
let  $\Gamma^{\prime\prime}=\Gamma^{\prime}+v_5w+v_6w-v_2v_5-v_2v_6$ for all $w\in T$, then $\Gamma^{\prime\prime}$ is a $\mathcal{K}_{3,3}^{-}$-free unbalanced signed graph. Note that $\lambda_1x_2=-x_1+x_3+x_4+x_5+x_6$, $\lambda_1(\sum_{w\in T}x_w)>3x_1+3x_3+x_4$. Then $\lambda_1(\sum_{w\in T}x_w-x_2)\geq4x_1+2x_3-x_5-x_6>0$. Thus, $\sum_{w\in t}x_w-x_2>0$ and
\begin{align*}
	\lambda_1(A(\Gamma^{\prime\prime}))-\lambda_1(A(\Gamma^{\prime}))&\geq X^T(A(\Gamma^{\prime\prime})-A(\Gamma^{\prime}))X
	=2(x_5+x_6)(\sum_{w\in T}x_w-x_2)>0,
\end{align*} 
 a contradiction. Thus, $v_5v_6\notin E(\Gamma^{\prime})$. By $(i)$ of Lemma \ref{l5}, assume that $v_5v_8,v_6v_9\in E(\Gamma^{\prime})$. Let  $\Gamma^{\prime\prime}=\Gamma^{\prime}+v_5w+v_6u-v_2v_5-v_2v_6$ for all $w\in T\setminus\{v_8\}$, $u\in T\setminus\{v_9\}$, then $\Gamma^{\prime\prime}$ is a $\mathcal{K}_{3,3}^{-}$-free unbalanced signed graph. Note that $\lambda_1x_2=-x_1+x_3+x_4+x_5+x_6$, $\lambda_1(\sum_{w\in T\setminus\{v_8\}}x_w)>2x_1+2x_3+x_4+x_6$ and $\lambda_1(\sum_{u\in T\setminus\{v_9\}}x_u)>2x_1+2x_3+x_4+x_5$. Then $\lambda_1(\sum_{w\in T\setminus\{v_8\}}x_w-x_2)>3x_1+x_3-x_5>0$, $\lambda_1(\sum_{u\in T\setminus\{v_9\}}x_u-x_2)>3x_1+x_3-x_6>0$ and
\begin{align*}
	\lambda_1(A(\Gamma^{\prime\prime}))-\lambda_1(A(\Gamma^{\prime}))&\geq X^T(A(\Gamma^{\prime\prime})-A(\Gamma^{\prime}))X\\
	&=2x_5(\sum_{w\in T\setminus\{v_8\}}x_w-x_2)+2x_6(\sum_{u\in T\setminus\{v_9\}}x_u-x_2)\\&>0,
\end{align*} 
 a contradiction. $(d)$ $|S|\geq 3, |T|\geq 3$. Without loss of generality, assume that $v_5,v_6, v_7\in S$, $v_8,v_9,v_{10}\in T$ and $v_4v_8\in E(\Gamma^{\prime})$. We first consider that $N_{\Gamma^{\prime}}(v_{10})\cap S\neq \phi$, assume that $v_{10}v_{5}\in E(\Gamma^{\prime})$. Let  $\Gamma^{\prime\prime}=\Gamma^{\prime}+v_{10}w-v_1v_{10}$ for all $w\in(S\cup\{v_4\})\backslash\{v_5\}$, then $\Gamma^{\prime\prime}$ is a $\mathcal{K}_{3,3}^{-}$-free unbalanced signed graph. Note that $\lambda_1(\sum_{w\in(S\cup\{v_4\})\setminus\{v_5\}}x_w)>|S|x_3+A$, $\lambda_1x_1=-x_2+x_3+x_4+A+B$, where $A$ is the sum of $(|S|-1)$ $x$-components in $\sum_{v_j\in S\cup T}x_j$ and $A+B=\sum_{v_j\in S\cup T}x_j$. Then $\lambda_1(\sum_{w\in(S\cup\{v_4\})\setminus\{v_5\}}x_w-x_1)>(|S|-1)x_3-x_4-B$. If $|S|-1\geq |T|+2$, then $\sum_{w\in(S\cup\{v_4\})\backslash\{v_5\}}$ $x_w-x_1>0$ and
\begin{align*}
	\lambda_1(A(\Gamma^{\prime\prime}))-\lambda_1(A(\Gamma^{\prime}))&\geq X^T(A(\Gamma^{\prime\prime})-A(\Gamma^{\prime}))X
	=2x_{10}(\sum_{w\in(S\cup\{v_4\})\backslash\{v_5\}}x_w-x_1)>0,
\end{align*} 
 a contradiction. So, $|S|\leq |T|+2$. Next, we assume that $N_{\Gamma^{\prime}}(v_{10})\cap S=\phi$, let $\Gamma^{\prime\prime}=\Gamma^{\prime}+v_{10}w-v_1v_{10}$ for all $w\in(S\cup\{v_4\})$, then $\Gamma^{\prime\prime}$ is a $\mathcal{K}_{3,3}^{-}$-free unbalanced signed graph. Similarly, if $|S|\geq |T|+1$, we can derive a contradiction. So, $|S|\leq |T|$. Through the above discussion, we have $|S|\leq |T|+2$. Now, we assert that $N_{\Gamma^{\prime}}(v_{5})\cap (S\backslash\{v_5\})=\phi$. Otherwise, $N_{\Gamma^{\prime}}(v_{5})\cap (S\backslash\{v_5\})\neq \phi$, assume that $v_5v_6\in E(\Gamma^{\prime})$. Let  $\Gamma^{\prime\prime}=\Gamma^{\prime}+v_5w-v_2v_5$ for all $w\in T$, then $\Gamma^{\prime\prime}$ is a $\mathcal{K}_{3,3}^{-}$-free unbalanced signed graph. Note that $\lambda_1(\sum_{w\in T}x_w)\geq|T|x_3+x_4+(|T|-1)(\sum_{w\in T}x_w)$, $\lambda_1x_2=-x_1+x_3+x_4+x_5+x_6+x_7+\sum_{u\in S\backslash\{v_5,v_6,v_7\}}x_u$. Then $\lambda_1(\sum_{w\in T}x_w-x_2)>(|T|-1)x_3+(|T|-1)(\sum_{w\in T}x_w)-x_5-x_6-x_7-\sum_{u\in S\backslash\{v_5,v_6,v_7\}}x_u$. It is evident that $(|T|-1)(\sum_{w\in T}x_w)>x_5+x_6+x_7$. Since $|T|\geq |S|-2$, $(|T|-1)x_3>\sum_{u\in S\backslash\{v_5,v_6,v_7\}}x_u$. Thus, $\sum_{w\in T}x_w-x_2>0$ and
\begin{align*}
	\lambda_1(A(\Gamma^{\prime\prime}))-\lambda_1(A(\Gamma^{\prime}))&\geq X^T(A(\Gamma^{\prime\prime})-A(\Gamma^{\prime}))X
	=2x_5(\sum_{w\in T}x_w-x_2)>0,
\end{align*} 
a contradiction. Thus, $N_{\Gamma^{\prime}}(v_{5})\cap (S\backslash\{v_5\})=\phi$. This implies that $N_{\Gamma^{\prime}}(v_{5})\cap T\neq \phi$ by $(i)$ of Lemma \ref{l5}. Assume that $v_5v_9\in E(\Gamma^{\prime})$. Let $\Gamma^{\prime\prime}=\Gamma^{\prime}+v_5w-v_2v_5$ for all $w\in T\backslash\{v_9\}$, then $\Gamma^{\prime\prime}$ is a $\mathcal{K}_{3,3}^{-}$-free unbalanced signed graph. Similarly, if $|T|\geq |S|-1$, we can derive a contradiction. So, $|T|\leq |S|-2$. Clearly, $|T|\geq |S|-2$, then $|T|=|S|-2$.  This implies that there is a vertex $v_a\in S$ such that $N_{\Gamma^{\prime}}(v_{a})\cap T= \phi$. Let $\Gamma^{\prime\prime}=\Gamma^{\prime}+v_aw-v_2v_a$ for all $w\in T$, then $\Gamma^{\prime\prime}$ is a $\mathcal{K}_{3,3}^{-}$-free unbalanced signed graph.  According to the above discussion, we get a contradiction. Thus, $N_{\Gamma^{\prime}}(v_4)\cap T=\phi$. Finally, we consider that $N_{\Gamma^{\prime}}(v_{4})\cap (S\cup T)=\phi$. Let $\Gamma^{\prime\prime}=\Gamma^{\prime}+v_{4}w-v_2v_{4}-v_1v_4$ for all $w\in S\cup T$, then $\Gamma^{\prime\prime}$ is a $\mathcal{K}_{3,3}^{-}$-free unbalanced signed graph. Similarly, we have $\lambda_1(A(\Gamma^{\prime\prime}))>\lambda_1(A(\Gamma^{\prime}))$, a contradiction.

 $(2)$ $N_{\Gamma^{\prime}}[v_2]= N_{\Gamma^{\prime}}[v_1]$. Let $S=N_{\Gamma^{\prime}}(v_1)\backslash\{v_2,v_3,v_4\}$, then $|S|\geq2$ by $d_{\Gamma^{\prime}}(v_1)\geq5$. Note that $d_{[\Gamma^{\prime}[S]\cup \{v_4\}]}(v_i)\leq 1$ for all $v_i\in S\cup \{v_4\}$ since $\Gamma^{\prime}$ is a $\mathcal{K}_{3,3}^{-}$-free unbalanced signed graph. By $(i)$ of Lemma \ref{l5}, $v_i$ is adjacent to every vertex in $V(\Gamma^{\prime})\backslash\{v_1,v_2\}$ for all $v_i\in V(\Gamma^{\prime})\backslash N_{\Gamma^{\prime}}[v_1]$ and there is at most one isolated vertex in the subgraph $\Gamma^{\prime}[S\cup \{v_4\}]$. Otherwise,  assume that $u,v$ are two isolated vertices in the subgraph $\Gamma^{\prime}[S\cup \{v_4\}]$. Let $\Gamma^{\prime\prime}=\Gamma^{\prime}+uv$, then $\Gamma^{\prime\prime}$ is a $\mathcal{K}_{3,3}^{-}$-free unbalanced signed graph and $\lambda_1(A(\Gamma^{\prime\prime}))>\lambda_1(A(\Gamma^{\prime}))$ by $(i)$ of Lemma \ref{l5}, a contradiction.  Next, we will further discuss in two subcases.
$(a)$ $N_{\Gamma^{\prime}}(v_4)\cap S=\phi$. This implies that there is no isolated vertex in the subgraph $\Gamma^{\prime}_{[S]}$. So, $|S|$ is even and subgraph $\Gamma^{\prime}_{[S]}\cong \frac{|S|}{2}P_2$. If $|S|=2$, without loss of generality, assume that $v_5,v_6\in S$ and $v_5v_6 \in E(\Gamma^{\prime})$. Let $\Gamma^{\prime\prime}=\Gamma^{\prime}+v_4v_5+v_4v_6-v_1v_4-v_2v_4$, then $\Gamma^{\prime\prime}$ is a $\mathcal{K}_{3,3}^{-}$-free unbalanced signed graph. Note that $\lambda_1(x_1+x_2)=-x_1-x_2+2x_3+2x_4+2x_5+2x_6$, $\lambda_1(x_5+x_6)=2x_1+2x_2+2x_3+x_5+x_6+2(\sum_{v_i\in V(\Gamma^{\prime})\setminus N_{\Gamma^{\prime}}[v_1]}x_i)$. Then $\lambda_1(x_5+x_6-x_1-x_2)>3x_1+3x_2-2x_4-x_5-x_6$. That is, $(\lambda_1+3)(x_5+x_6-x_1-x_2)>2(x_5+x_6-x_4)$. Since $\lambda_1x_4=x_1+x_2+x_3+\sum_{v_i\in V(\Gamma^{\prime})\setminus N_{\Gamma^{\prime}}[v_1]}x_i$, $\lambda_1(x_5+x_6-x_4)>x_1+x_2+x_3+x_5+x_6>0$. Thus, $x_5+x_6-x_1-x_2>0$ and
\begin{align*}
	\lambda_1(A(\Gamma^{\prime\prime}))-\lambda_1(A(\Gamma^{\prime}))&\geq X^T(A(\Gamma^{\prime\prime})-A(\Gamma^{\prime}))X
	=2x_4(x_5+x_6-x_1-x_2)>0,
\end{align*} 
 a contradiction. So, $|S|\neq 2$. If $|S|\geq 4$, let $\Gamma^{\prime\prime}=\Gamma^{\prime}+v_4w-v_1v_4-v_2v_4$ for all $w\in S$, then $\Gamma^{\prime\prime}$ is a $\mathcal{K}_{3,3}^{-}$-free unbalanced signed graph. Note that $\lambda_1(x_1+x_2)=-x_1-x_2+2x_3+2x_4+2(\sum_{w\in S}x_w)$, $\lambda_1(\sum_{w\in S}x_w)\geq |S|x_1+|S|x_2+|S|x_3+\sum_{w\in S}x_w$. Then $\lambda_1(\sum_{w\in S}x_w-x_1-x_2)\geq (|S|+1)x_1+(|S|+1)x_2+(|S|-2)x_3-2x_4-\sum_{w\in S}x_w$. That is, $(\lambda_1+|S|+1)(\sum_{w\in S}x_w-x_1-x_2)\geq(|S|-2)x_3-2x_4+|S|\sum_{w\in S}x_w$. Since $|S|\geq 4$, $(|S|-2)x_3-2x_4>0$. Thus, $\sum_{w\in S}x_w-x_1-x_2>0$ and
\begin{align*}
	\lambda_1(A(\Gamma^{\prime\prime}))-\lambda_1(A(\Gamma^{\prime}))&\geq X^T(A(\Gamma^{\prime\prime})-A(\Gamma^{\prime}))X
	=2x_4(\sum_{w\in S}x_w-x_1-x_2)>0,
\end{align*} 
a contradiction.  $(b)$ $N_{\Gamma^{\prime}}(v_4)\cap S\neq\phi$. Without loss of generality, assume that $v_5,v_6\in S$ and $v_4v_5 \in E(\Gamma^{\prime})$. We first assert that there is no isolated vertex in the subgraph $\Gamma^{\prime}[S\cup\{v_4\}]$. Otherwise, assume that $v_6$ is an isolated vertex in the subgraph $\Gamma^{\prime}[S\cup\{v_4\}]$. Then $|S|$ is even and subgraph $\Gamma^{\prime}_{[(S\cup \{v_4\})\setminus \{v_6\}]}\cong \frac{|S|}{2}P_2$. If $|S|=2$, let  $\Gamma^{\prime\prime}=\Gamma^{\prime}+v_4v_6+v_5v_6-v_1v_6-v_2v_6$, then $\Gamma^{\prime\prime}$ is a $\mathcal{K}_{3,3}^{-}$-free unbalanced signed graph. Note that $\lambda_1(x_1+x_2)=-x_1-x_2+2x_3+2x_4+2x_5+2x_6$, $\lambda_1(x_4+x_5)=2x_1+2x_2+2x_3+x_4+x_5+2(\sum_{v_i\in V(\Gamma^{\prime})\setminus N_{\Gamma^{\prime}}[v_1]}x_i)$. Then $\lambda_1(x_4+x_5-x_1-x_2)=3x_1+3x_2+2(\sum_{v_i\in V(\Gamma^{\prime})\setminus N_{\Gamma^{\prime}}[v_1]}x_i)-x_4-x_5-2x_6$. That is, $(\lambda_1+1)(x_4+x_5-x_1-x_2)=2(x_1+x_2+\sum_{v_i\in V(\Gamma^{\prime})\setminus N_{\Gamma^{\prime}}[v_1]}x_i-x_6)$. Note that $\lambda_1x_6=x_1+x_2+x_3+\sum_{v_i\in V(\Gamma^{\prime})\setminus N_{\Gamma^{\prime}}[v_1]}x_i$, $\lambda_1(\sum_{v_i\in V(\Gamma^{\prime})\setminus N_{\Gamma^{\prime}}[v_1]}x_i+x_1+x_2)>-x_1-x_2+2x_3+2x_4+2x_5+2x_6+\sum_{v_i\in V(\Gamma^{\prime})\setminus N_{\Gamma^{\prime}}[v_1]}x_i$. Then $\lambda_1(x_1+x_2+\sum_{v_i\in V(\Gamma^{\prime})\setminus N_{\Gamma^{\prime}}[v_1]}x_i-x_6)>-2x_1-2x_2+x_3+2x_4+2x_5+2x_6$. That is, $(\lambda_1+2)(x_1+x_2+\sum_{v_i\in V(\Gamma^{\prime})\setminus N_{\Gamma^{\prime}}[v_1]}x_i-x_6)>x_3+2x_4+2x_5+2(\sum_{v_i\in V(\Gamma^{\prime})\setminus N_{\Gamma^{\prime}}[v_1]}x_i)>0$. Thus, $x_4+x_5-x_1-x_2>0$ and
\begin{align*}
	\lambda_1(A(\Gamma^{\prime\prime}))-\lambda_1(A(\Gamma^{\prime}))&\geq X^T(A(\Gamma^{\prime\prime})-A(\Gamma^{\prime}))X
	=2x_6(x_4+x_5-x_1-x_2)>0,
\end{align*} 
 a contradiction. So, $|S|\neq 2$.  If $|S|\geq 4$, let  $\Gamma^{\prime\prime}=\Gamma^{\prime}+v_6v_4+v_6w-v_1v_6-v_2v_6$ for all $w\in S\setminus\{v_6\}$, then $\Gamma^{\prime\prime}$ is a $\mathcal{K}_{3,3}^{-}$-free unbalanced signed graph. Note that $\lambda_1(x_1+x_2)=-x_1-x_2+2x_3+2x_4+2x_6+2(\sum_{w\in S\setminus\{v_6\}}x_w)$, $\lambda_1(\sum_{w\in S\setminus\{v_6\}}x_w+x_4)\geq|S|x_1+|S|x_2+|S|x_3+\sum_{w\in S\setminus\{v_6\}}x_w+x_4$. Then $\lambda_1(\sum_{w\in S\setminus\{v_6\}}x_w+x_4-x_1-x_2)\geq(|S|+1)x_1+(|S|+1)x_2+(|S|-2)x_3-\sum_{w\in S\setminus\{v_6\}}x_w-x_4-2x_6$. That is, $(\lambda_1+1)(\sum_{w\in S\setminus\{v_6\}}x_w+x_4-x_1-x_2)\geq|S|x_1+|S|x_2+(|S|-2)x_3-2x_6$. Since$|S|\geq 4$, $(|S|-2)x_3-2x_6>0$. It is evident that $\sum_{w\in S\setminus\{v_6\}}x_w+x_4-x_1-x_2>0$ and
\begin{align*}
	\lambda_1(A(\Gamma^{\prime\prime}))-\lambda_1(A(\Gamma^{\prime}))&\geq X^T(A(\Gamma^{\prime\prime})-A(\Gamma^{\prime}))X
	=2(\sum_{w\in S\setminus\{v_6\}}x_w+x_4-x_1-x_2)>0,
\end{align*}  
 a contradiction. Hence, there is no isolated vertex in the subgraph $\Gamma^{\prime}[S\cup\{v_4\}]$. This implies that $|S|$ is odd and  subgraph $\Gamma^{\prime}_{[S\cup \{v_4\}]}\cong \frac{|S|+1}{2}P_2$. Next, we assert that $|S|=3$. Otherwise,  $|S|\geq 5$, let $\Gamma^{\prime\prime}=\Gamma^{\prime}+v_5u-v_1v_5-v_2v_5$ for all $u\in S\setminus\{v_5\}$, then $\Gamma^{\prime\prime}$ is a $\mathcal{K}_{3,3}^{-}$-free unbalanced signed graph. Note that $\lambda_1(\sum_{u\in S\setminus\{v_5\}}x_u)\geq(|S|-1)x_1+(|S|-1)x_2+(|S|-1)x_3+\sum_{u\in S\setminus\{v_5\}}x_u$, $\lambda_1(x_1+x_2)=-x_1-x_2+2x_3+2x_4+2x_5+2(\sum_{u\in S\setminus\{v_5\}}x_u)$. Then $\lambda_1(\sum_{u\in S\setminus\{v_5\}}x_u-x_1-x_2)\geq|S|x_1+|S|x_2+(|S|-3)x_3-2x_4-2x_5-\sum_{u\in S\setminus\{v_5\}}x_u$. That is, $(\lambda_1+|S|)(\sum_{u\in S\setminus\{v_5\}}x_u-x_1-x_2)\geq(|S|-3)x_3-2x_4-2x_5+(|S|-1)(\sum_{u\in S\setminus\{v_5\}}x_u)$. Since $|S|\geq 5$, $(|S|-3)x_3>2x_5$. It is evident that $(|S|-1)\sum_{u\in S\setminus\{v_5\}}x_u>2x_4$. Thus, $\sum_{u\in S\setminus\{v_5\}}x_u-x_1-x_2>0$ and
\begin{align*}
	\lambda_1(A(\Gamma^{\prime\prime}))-\lambda_1(A(\Gamma^{\prime}))&\geq X^T(A(\Gamma^{\prime\prime})-A(\Gamma^{\prime}))X
	=2(\sum_{u\in S\setminus\{v_5\}}x_u-x_1-x_2)>0,
\end{align*}  
 a contradiction. So, $|S|=3$. Without loss of generality, assume that $v_5,v_6,v_7\in S$ and $v_4v_5, v_6v_7 \in E(\Gamma^{\prime})$ by $(i)$ of Lemma \ref{l5}. Let $\Gamma^{\prime\prime}=\Gamma^{\prime}+v_5v_6+v_5v_7-v_1v_5-v_2v_5$, then $\Gamma^{\prime\prime}$ is a $\mathcal{K}_{3,3}^{-}$-free unbalanced signed graph. Note that $\lambda_1(x_6+x_7)=2x_1+2x_2+2x_3+x_6+x_7+2(\sum_{v_i\in V(\Gamma^{\prime})\setminus N_{\Gamma^{\prime}}[v_1]}x_i)$, $\lambda_1(x_1+x_2)=-x_1-x_2+2x_3+2x_4+2x_5+2x_6+2x_7$. Then $\lambda_1(x_6+x_7-x_1-x_2)\geq3x_1+3x_2-2x_4-2x_5-x_6-x_7$. That is, $(\lambda_1+3)(x_6+x_7-x_1-x_2)\geq2(x_6+x_7-x_4-x_5)$. Since $\lambda_1(x_4+x_5)=2x_1+2x_2+2x_3+x_4+x_5+2(\sum_{v_i\in V(\Gamma^{\prime})\setminus N_{\Gamma^{\prime}}[v_1]}x_i)$, $\lambda_1(x_6+x_7-x_4-x_5)=x_6+x_7-x_4-x_5$. That is, $(\lambda_1-1)(x_6+x_7-x_4-x_5)=0$. So, $x_6+x_7-x_4-x_5=0$ by $\lambda_1(A(\Gamma^\prime))\geq n-2$. Thus, $x_6+x_7-x_1-x_2\geq 0$ and
\begin{align*}
	\lambda_1(A(\Gamma^{\prime\prime}))-\lambda_1(A(\Gamma^{\prime}))&\geq X^T(A(\Gamma^{\prime\prime})-A(\Gamma^{\prime}))X
	=2x_5(x_6+x_7-x_1-x_2)\geq 0.
\end{align*} 
 If $\lambda_1(A(\Gamma^{\prime\prime}))= \lambda_1(A(\Gamma^{\prime}))$, let $\Gamma^{\prime\prime\prime}=\Gamma^{\prime\prime}+v_4v_6+v_4v_7-v_1v_4-v_2v_4$, then $\Gamma^{\prime\prime\prime}$ is a $\mathcal{K}_{3,3}^{-}$-free unbalanced signed graph. Similarly, we have $\lambda_1(A(\Gamma^{\prime\prime\prime}))>\lambda_1(A(\Gamma^{\prime\prime}))=\lambda_1(A(\Gamma^{\prime}))$, a contradiction. This completes the proof.\\\\
 {\bf{Author Contributions}} All authors jointly worked on the results and they read and approved the final manuscript.\\\\
 {\bf{Data Availability}} Not applicable. The manuscript has no associated data.
 \section*{Acknowledgements}
 The authors would like to show great gratitude to anonymous referees for their valuable suggestions which lead to an improvement of the original manuscript.
 \section*{Declarations}
 
 \noindent{\bf{Conflict of interest}} The authors declare that they have no conflict of interest.

\end{document}